\definecolor{codegreen}{rgb}{0, 0.6, 0}
\definecolor{codegray}{rgb}{0.5, 0.5, 0.5}
\definecolor{codepurple}{rgb}{0.58, 0, 0.82}
\definecolor{backcolour}{rgb}{0.95, 0.95, 0.92}
\lstdefinelanguage{Macaulay2}
{
xleftmargin=.2in, 
xrightmargin=.2in, 
basicstyle={\ttfamily}, 
keywordstyle={\color{blue}}, 
commentstyle={\color{codegreen}}, 
stringstyle={\color{red!40!black}}, 
rulecolor=\color{yellow}, 
basewidth={1.2ex}, 
sensitive=false, 
morecomment=[l]{--}, 
morecomment=[s]{-*}{*-}, 
morestring=[b]", 
escapechar={`}, 
escapebegin={\rmfamily}, 
morekeywords={load, random, degree, genus, topComponents, ideal, Ext, minors, quotient, intersect, map, kernel, preimage, codim, sheaf, matrix, hilbertPolynomial, Projective, false, sheafExt, ann, cooker, flatten, gens, entries, basis, apply}
}
\newcommand{\Z}{{\mathbb Z}}
\newcommand{\p}[1]{{\mathbb{P}^{#1}}}
\newcommand{\op}[1]{{\mathcal O}_{\mathbb{P}^{#1}}}
\newcommand{\opn}{{\mathcal O}_{\mathbb{P}^n}}
\newcommand{\tp}[1]{{\rm T}{\mathbb{P}^{#1}}}
\newcommand{\inhom}{{\mathcal H}{\it om}}
\newcommand{\inext}{{\mathcal E}{\it xt}}
\newcommand{\intor}{{\mathcal T}{\it or}}
\newcommand{\OO }{\mathcal{O} }
\newcommand{\calo}{\mathcal{O}}
\newcommand{\sI}{\mathscr{I}}
\DeclareMathOperator{\Hom}{Hom}
\DeclareMathOperator{\Ext}{Ext}
\DeclareMathOperator{\homd}{hom} 
\DeclareMathOperator{\extd}{ext}
\DeclareMathOperator{\Aut}{Aut}
\DeclareMathOperator{\coker}{coker}
\DeclareMathOperator{\rk}{{rk}}
\DeclareMathOperator{\Pic}{{Pic}}
\DeclareMathOperator{\gl}{{GL}}
\newcommand{\hilb}{\rm Hilb}
\newcommand{\calf}{{\mathcal F}}
\newcommand{\calr}{{\mathcal R}}
\newcommand{\calh}{{\mathcal H}}
\newcommand{\calc}{{\mathcal C}}
\newcommand{\PE}{ \mathbb{P}(\mathcal{E})}
\newtheorem{theorem}{Theorem}
\newtheorem{proposition}[theorem]{Proposition}
\newtheorem{lemma}[theorem]{Lemma}
\theoremstyle{definition}
\newtheorem{remark}[theorem]{Remark}
\newtheorem{example}[theorem]{Example}
\def\subsubsection{\@startsection{subsubsection}{3}%
\z@{.5\linespacing\@plus.7\linespacing}{-.5em}%
{\normalfont\bfseries}}
\def\paragraph{\@startsection{paragraph}{4}%
 \z@{.5\linespacing\@plus.5\linespacing}{-\fontdimen2\font}%
 {\normalfont\bfseries}}
\def\subparagraph{\@startsection{subparagraph}{5}%
 \z@{.3\linespacing\@plus.3\linespacing}{-\fontdimen2\font}%
 {\normalfont\bfseries}}
\title[rank-two Reflexive Sheaves on $\p3$ with $c_2=4$]{Rank-two reflexive sheaves on the projective space with second Chern class equal to four}
\author[M. Jardim]{Marcos Jardim}
\email{jardim@unicamp.br}
\author[A. Muniz]{Alan Muniz}
\email[A. Muniz]{alan.nmuniz@ufpe.br, alannmuniz@gmail.com}
\address[MJ, AM]{Universidade Estadual de Campinas (UNICAMP) \\ Instituto de Matemática, Estatística e Computação Científica (IMECC) \\ Departamento de Matem\'atica \\
Rua S\'ergio Buarque de Holanda, 651\\ 13083-859 Campinas-SP, Brazil}
\address[AM]{Departamento de Matem\'atica \\ Centro de Ci\^encias Exatas e da Natureza \\ Universidade Federal de Pernambuco \\ Recife - PE, CEP 50740-560, Brazil }
\date{September 2024}
\subjclass[2020]{14F06, 14D20}
\keywords{Reflexive sheaf, moduli space, Serre correspondence}
\begin{document}

\begin{abstract}
We study rank-two reflexive sheaves on $\p3$ with $c_2 =4$, expanding on previous results for $c_2\le3$. We show that every spectrum not previously ruled out is realized. Moreover, moduli spaces are studied and described in detail for $c_1=-1$ or $0$ and $c_3\ge8$.
\end{abstract}

\maketitle

\tableofcontents
\section{Introduction}

Stable rank-$2$ reflexive sheaves have been the subject of an influential article by Hartshorne \cite{H2} more than 40 years ago, and numerous authors have studied them since then. Most relevant for the present paper, Chang provided in \cite{Ch} a complete description of the moduli spaces when the second Chern class $c_2\leq 3$. While Mir\'o-Roig \cite{MR} and Chang \cite{Ch3} established existence results for sheaves with $c_2\geq 4$ and a given third Chern class. This work aims to study moduli spaces of stable rank-$2$ reflexive sheaves with $c_2=4$.

Beyond Chern classes, rank-$2$ reflexive sheaves admit finer numerical invariants, also introduced by Hartshorne in \cite{H2}, called the \emph{spectrum}. This is a multi-set of integers satisfying some strict conditions, cf. Theorem \ref{thm: boundspec} below. 
Our first step is to list all possible spectra for $c_2 = 4$, cf. Tables \ref{tab: spec c1=0} and \ref{tab: spec c1=-1} below. Two of these spectra (marked with asterisks) were known to be unrealizable. In particular, there are no rank-$2$ reflexive sheaves $F$ with $(c_1,c_2,c_3)=(-1,4,14)$, cf. \cite[Theorem A]{MR} or \cite[Proposition 2]{Ch3}. As part of our work, we exhibit sheaves evincing each other spectra.

Let $\calr(c_1, c_2, c_3)$ denote the moduli space of stable rank-$2$ reflexive sheaves $F$ with Chern classes $(c_1, c_2, c_3)$. For $c_2=4$, essentially two situations have been studied in the literature: the case of locally free sheaves (i.e., $c_3 = 0$) and the case of maximal $c_3$ (that is, $c_3=14$ when $c_1=0$ and $c_3=16$ when $c_1=-1$). Let us revise the available results. The case of vector bundles was described by Barth \cite{Barth1981} and Chang \cite{Ch2} (for $c_1=0$) and by B\u{a}nic\u{a} and Manolache \cite{BM} (for $c_1=-1$). These authors have shown that each of the moduli spaces $\calr(c_1,4,0)$ consists of exactly two irreducible components; both are of expected dimension 29 in the case $c_1=0$, while one of the components is oversized in the case $c_1=-1$. For maximal $c_3$, the moduli spaces were described by Hartshorne in \cite{H2} and Chang \cite{Ch3}. More precisely, both $\calr(0, 4, 14)$ and $\calr(-1, 4, 16)$ are irreducible, nonsingular, and rational of dimension 29, cf. \cite[Theorem 5]{Ch3} and \cite[Theorem 9.2]{H2}, respectively. Note that the former has the expected dimension, while the latter is oversized. This paper examines the remaining cases for $\calr(c_1,4, c_3)$. To be precise, we prove the following results.

\begin{theorem} \label{thm: 0}
Set $c_1=0$.
\begin{enumerate}
\item For $1\le c_3\le6$, the moduli scheme $\calr(0,4,c_3)$ has a generically reduced and irreducible component of expected dimension $29$.
\item The moduli scheme $\calr(0,4,8)$ is generically smooth, unirational of dimension $29$. Moreover, the reduction $\calr(0,4,8)_{\rm red}$ is irreducible. 
\item The moduli scheme $\calr(0,4,10)$ is smooth, irreducible, unirational of dimension $29$.
\item The moduli scheme $\calr(0,4,12)$ possesses two irreducible components
\begin{itemize}
 \item $\calr(0,4,12)_0$ is smooth, unirational of dimension $29$;
 \item $\calr(0,4,12)_1$ is non-reduced of dimension 29, and $\big(\calr(0,4,12)_1\big)_{\rm red}$ is smooth and rational.
\end{itemize}
\end{enumerate}
\end{theorem}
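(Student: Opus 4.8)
The plan is to handle all four parts uniformly through the Serre correspondence. Given a stable rank-two reflexive sheaf $F$ with $(c_1,c_2,c_3)=(0,4,c_3)$, I choose the least twist $k$ for which $F(k)$ has a nonzero section; a suitable section vanishes along a curve $Y\subset\p3$ fitting into
\[
0\to\op3\to F(k)\to\sI_Y(2k)\to0,
\]
where $\deg Y=c_2+k^2$ is fixed and, since $c_3$ is a twist invariant in rank two, the arithmetic genus of $Y$ is determined by $c_3$ and $k$. Conversely, a curve $Y$ with these invariants satisfying the cohomological condition that makes the extension class generate $\inext^1(\sI_Y(2k),\op3)$ locally everywhere yields a reflexive $F$. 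The first task is therefore to read off from the spectrum Tables, for each relevant value $c_3\in\{2,4,6\}$ (part (i)) and $c_3=8,10,12$ (parts (ii)--(iv)), which curves occur---$c_3$ governing the genus of $Y$ and, through the spectrum, the local structure of $F$ at its non-locally-free points---and to pin down the relevant component(s) of the Hilbert scheme parametrizing them.

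With the curves identified, the dimension count proceeds through the tangent--obstruction theory of the moduli functor, whose tangent space at $[F]$ is $\Ext^1(F,F)$ and whose obstructions lie in $\Ext^2(F,F)$. By Riemann--Roch one finds $\chi(F,F)=4-8c_2$, while stability gives $\Hom(F,F)=\C$ and Serre duality on $\p3$ yields $\Ext^3(F,F)\cong\Hom(F,F(-4))^{*}=0$ because $\mu(F(-4))<\mu(F)$; hence $\extd^1-\extd^2=8c_2-3=29$, the expected dimension. The whole computation thus reduces to controlling the single group $\Ext^2(F,F)\cong\Ext^1(F,F(-4))^{*}$, which I would attack via the local-to-global spectral sequence together with the cohomology of $F$ prescribed by the spectrum. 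In parallel I would bound the dimension of the family built from the chosen Hilbert scheme of curves together with the projectivized extension data, and check that it equals $29$.

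The qualitative assertions then follow from the behaviour of $\Ext^2(F,F)$ along each family. For parts (i)--(iii) the aim is to show that $\Ext^2(F,F)$ vanishes at the generic point (parts (i), (ii)) or at every point (part (iii)), producing a component of dimension $29$ that is, respectively, generically reduced, generically smooth, and everywhere smooth; irreducibility is inherited from the governing Hilbert scheme, and the unirationality (or rationality) claims follow from the explicit rational parametrization of the curves together with the (projective-)bundle structure of the extension classes over them. For part (iv) the two components correspond to two distinct families of curves: on $\calr(0,4,12)_0$ I expect the obstructions to vanish and the previous argument to apply, whereas on $\calr(0,4,12)_1$ the computation should instead exhibit a nonzero $\Ext^2(F,F)$ of constant positive dimension along the entire family, forcing the non-reduced scheme structure while $\big(\calr(0,4,12)_1\big)_{\rm red}$ remains smooth and rationally parametrized.

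The principal obstacle I foresee is precisely the obstruction analysis for part (iv): proving that $\Ext^2(F,F)\neq0$ identically along $\calr(0,4,12)_1$, so that the component is genuinely non-reduced, while controlling its reduction finely enough to establish smoothness and rationality, and separately showing that the two families do not meet in a way that would merge them into one component. This calls for a careful, spectrum-driven study of the cohomology of $F$ and of the curves $Y$, likely supported by the Macaulay2 computations hinted at in the preamble. A secondary subtlety, recurring in every part, is to verify that the extensions one writes down are reflexive rather than merely torsion-free, and that the constructed families genuinely dominate their moduli components instead of lying in the boundary.
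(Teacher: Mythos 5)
Your scaffolding --- Serre correspondence at the minimal twist, the spectrum tables to identify the relevant curves, the Riemann--Roch computation giving $\extd^1(F,F)-\extd^2(F,F)=29$, and the reduction of everything to controlling $\Ext^2(F,F)$ --- is exactly the paper's framework, and your numerics are correct ($c_3$ is indeed twist-invariant in rank two, and the degree/genus of the section curve are as you say). But the plan stops precisely where the real work lies, and at the decisive point it would not go through as stated. For part (iv), non-reducedness requires the \emph{exact} value $\extd^2(F,F)=1$ at \emph{every} point of $\calr(0,4,12)_1$; a generic non-vanishing of $\Ext^2$, which is all your "the computation should exhibit a nonzero $\Ext^2$" could plausibly deliver, proves nothing (it is compatible with a reduced component singular along a divisor, as in fact happens for $\calr(-1,4,8)$). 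The paper gets the constancy from Proposition \ref{prop: extremal-ext}: sheaves in this stratum come from extremal quintics, the diagram \eqref{diag1} reduces $\Ext^2(F,F)$ to $\Ext^2(F,\sI_{Z/H}(-3))$, and the key vanishing $\Ext^1(F,\OO_Z)=0$ (Lemma \ref{lem: vanish-extremal}) is verified by writing explicit free resolutions of $F$ adapted to the possible configurations of $Z$ --- a mechanism entirely absent from your outline, and not a consequence of the local-to-global spectral sequence applied to $\Ext^1(F,F(-4))^*$ together with the cohomology table. One then still needs Chang's theorem that $(\calr(0,4,12)_1)_{\rm red}$ is smooth rational of dimension $29$ to set $\extd^1=30$ against dimension $29$.

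The positive parts also do not follow by the routes you indicate. Irreducibility is \emph{not} "inherited from the governing Hilbert scheme": each $\calr(0,4,c_3)$ is stratified by $(h^0(F(1)),h^1(F(-1)))$, and the strata correspond to \emph{different} Hilbert-scheme families (for $c_3=8$, liaison sends them to the three components of $\hilb^{4,-1}(\p3)$). The paper glues the strata using the a priori bound that every component has dimension at least $\extd^1-\extd^2=29$, so the $27$- and $28$-dimensional strata cannot be components; for $c_3=10$, smoothness at every point --- obtained from Lemma \ref{lem: ext2=0} because all such $F$ are $3$-regular with $h^0(F)=h^1(F(-1))=0$, not stratum by stratum --- forces $\calr(0,4,10)_1$ into the closure of $\calr(0,4,10)_0$. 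Unirationality likewise comes not from a projective bundle of extension classes over a rational family of curves, but from Castelnuovo--Mumford regularity producing explicit resolutions (e.g.\ $0\to\tp3(-4)\oplus\op3(-3)\to\op3(-1)\oplus\op3(-2)^{\oplus5}\to F\to0$ for $c_3=10$) together with Lemma \ref{lem: lift}, which makes an affine matrix space dominate the component. Finally, for part (i) your hope of proving generic vanishing of $\Ext^2$ from the spectrum alone overshoots what is actually known: the paper constructs explicit curves, verifies unobstructedness of specific examples by Macaulay2, and propagates it only by semicontinuity --- which is exactly why the theorem claims "generically reduced" there and why reducedness of $\calr(0,4,8)$ is left open. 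So the proposal is a faithful map of the terrain, but the two load-bearing ingredients --- the pointwise $\extd^2=1$ computation on the non-reduced component and the regularity/resolution arguments that deliver smoothness, unirationality, and the gluing of strata --- are missing rather than merely deferred.
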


Further details on the structure of $\calr(0,4,8)$, $\calr(0,4,10)$, and $\calr(0,4,12)$ are provided in Theorem \ref{thm: 048}, Theorem \ref{thm: 0410}, and Theorem \ref{thm: 0412}, respectively. Remarkably, $\calr(0,4,12)_1$ is non-reduced at a general point; the reduction $(\calr(0,4,12)_1)_{\rm red}$ has already been described by Chang in \cite[Theorem 10]{Ch3}. To the extent of our knowledge, the moduli spaces $\calr(c_1,c_2,c_3)$ previously known to have generically non-reduced irreducible components are: $\calr(0,13,66)$, $\calr(0, 13, 74)$, and $\calr(-1, 14, 88)$, by Kleppe in \cite[p.1138]{Kleppe}; and $\calr(0,14,0)$ by Lavrov in \cite{Lavrov}. Both Kleppe's and Lavrov's examples are based on the notorious non-reduced component of the Hilbert Scheme $\hilb^{14,24}(\p3)$, of curves of degree $14$ and genus $24$, described by Mumford in \cite{MR0148670}. Note that the value of $c_2$ in our example is much lower; furthermore, our example is independent of Mumford's result. Finally, it is worth pointing out that we do not know whether $\calr(0,4,8)$ is reduced.

\begin{theorem} \label{thm: -1}
Set $c_1=-1$.
\begin{enumerate}
\item For $1\le c_3\le6$, the moduli scheme $\calr(-1,4,c_3)$ has a generically reduced and irreducible component of the expected dimension $27$.
\item The moduli scheme $\calr(-1,4,8)$ is generically smooth of dimension 27, singular along a subvariety of codimension 1. Furthermore, the reduction $(\calr(-1,4,8))_{\rm red}$ is irreducible.
\item The moduli scheme $\calr(-1,4,10)$ is integral, unirational of dimension 27, smooth away from codimension 2.
\item The moduli scheme $\calr(-1,4,12)$ is integral, smooth, unirational of dimension 27. 
\end{enumerate}
\end{theorem}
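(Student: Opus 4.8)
The plan is to reduce everything to the geometry of space curves via the Serre correspondence, mirroring the treatment of $c_1=0$ in Theorem \ref{thm: 0}. A stable $F$ with invariants $(-1,4,c_3)$ has $H^0(F)=0$, so I would pass to the least twist $F(\ell)$ admitting a section, the value of $\ell$ being dictated by the spectrum through Theorem \ref{thm: boundspec}. Such a section vanishes along a curve $Y$ and yields
\[
0\longrightarrow\calo_{\p3}\longrightarrow F(\ell)\longrightarrow\sI_Y(2\ell-1)\longrightarrow 0 ,
\]
where $\deg Y=c_2(F(\ell))=\ell^2-\ell+4$ and, because the non-locally-free locus and hence $c_3$ are unchanged under twisting, $c_3=(2p_a(Y)-2)+(5-2\ell)\deg Y$. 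The non-locally-free points of $F$ are precisely the zeros of the induced section of $\omega_Y(5-2\ell)$, a divisor of degree $c_3$ on $Y$. I would first organise this as a morphism from an open subscheme of $\calr(-1,4,c_3)$ to a Hilbert scheme of curves, with fibres $\mathbb P\big(\Ext^1(\sI_Y(2\ell-1),\calo_{\p3})\big)\cong\mathbb P\big(H^0(\omega_Y(5-2\ell))\big)$, the isomorphism coming from the local-to-global spectral sequence together with $\inhom(\sI_Y,\calo_{\p3})=\calo_{\p3}$ and $\inext^1(\sI_Y,\calo_{\p3})\cong\omega_Y(4)$.

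Next I would run the deformation theory. For stable $F$ one has $\Hom(F,F)=\C$ and, by stability and Serre duality, $\Ext^3(F,F)=\Hom(F,F(-4))^\vee=0$; moreover the degree-three part of $\mathrm{ch}(F\otimes F^\vee)$ vanishes, so $\chi(F,F)$ is independent of $c_3$ and Riemann--Roch yields $\extd^1(F,F)-\extd^2(F,F)=8c_2-2c_1^2-3=27$. Hence $\calr(-1,4,c_3)$ has dimension at least $27$ at every point, and a point is smooth of dimension $27$ exactly when $\Ext^2(F,F)=0$. This obstruction group I would compute by applying $\Hom(F(\ell),-)$ and then $\Hom(-,\calo_{\p3})$ and $\Hom(-,\sI_Y(2\ell-1))$ to the defining sequence, reducing $\Ext^2(F,F)$ to cohomology of twists of $\calo_{\p3}$, $\sI_Y$, $\calo_Y$ and $\omega_Y$.

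The cleanest case should be $c_3=12$: here $\ell=1$, the curve $Y$ has degree $4$ and arithmetic genus $1$, hence is a complete intersection of two quadrics, and the non-locally-free divisor is a general member of $|\calo_Y(3)|$. The elliptic quartics form a smooth rational family of dimension $16$ and the fibres are copies of $\mathbb P^{11}$, so the total space is smooth, irreducible, unirational of dimension $27$; the Serre correspondence being injective here, this identifies $\calr(-1,4,12)$ and forces $\Ext^2(F,F)=0$ throughout, giving integrality and smoothness. For $c_3=10$ and for $1\le c_3\le 6$ I would likewise pin down the dominant family of curves of the degree and genus forced by the displayed formula --- now cut out inside the Hilbert scheme by the vanishing conditions $h^0(F(\ell-1))=0$ that encode minimality of $\ell$ --- prove its unirationality, and read off irreducibility and the generic dimension $27$. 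The locus where $\Ext^2(F,F)\ne 0$ is then the source of the singularities, and for $c_3=10$ I would verify that it sits in codimension two.

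The principal obstacle is the case $c_3=8$, where $\calr(-1,4,8)$ is singular along a subvariety of codimension one. The expected dimension $27$ is still attained, but $\Ext^2(F,F)$ is forced to be nonzero along a divisor, and the difficulty is twofold: to locate this divisorial locus by determining exactly which configurations of $Y$ and of its degree-$8$ non-locally-free divisor cause the obstruction group to jump, proving the jump locus has codimension precisely one; and to show that $(\calr(-1,4,8))_{\rm red}$ nonetheless stays irreducible, by realising it as the closure of the single dominant curve family. I anticipate that controlling the boundary --- where $Y$ degenerates or the non-locally-free points collide --- and certifying the relevant $\Ext$ and cohomology dimensions will rely on explicit, computer-algebra-assisted examples, as in the rest of the paper.
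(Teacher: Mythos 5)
Your framework is the same as the paper's (Serre correspondence at the minimal twist, stratification by cohomology, and the bound $\extd^1(F,F)-\extd^2(F,F)=27$ forcing every component to have dimension at least $27$), but two steps as written are genuinely wrong, and the hardest case is left undone. First, in the case $c_3=12$ your claim that a degree-$4$, genus-$1$ curve ``is a complete intersection of two quadrics'' fails: such curves are ACM with $h^0(\sI_Y(2))=2$, but the two quadrics may share a linear factor, in which case $Y$ is a plane cubic plus an incident line or a curve on a double plane; the paper's Proposition \ref{prop: res-odd-12} must treat these cases separately before exhibiting the uniform resolution. More seriously, your inference that the smooth $27$-dimensional dominating family ``forces $\Ext^2(F,F)=0$ throughout'' is a non sequitur: dominating the moduli by a smooth variety of the right dimension controls the reduced structure but says nothing about the obstruction space. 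This very paper contains the counterexample: $\calr(0,4,12)_1$ is dominated by a nice family, its reduction is smooth and rational of dimension $29$, yet it is generically non-reduced with $\extd^1(F,F)=30$. The paper instead proves $\extd^2(F,F)=0$ for $c_3=12$ (and on the big strata for $c_3=8,10$) via the regularity criterion of Lemma \ref{lem: ext2=0}, which requires first establishing $3$-regularity and the vanishings $h^0(F)=h^1(F(-1))=0$ from the ACM property of the curves --- a step absent from your outline.

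For $c_3=8$ you correctly identify the difficulty but supply none of the content that resolves it. Concretely, the paper must: (i) prove $\calr(-1,4,8)_{0,1}=\emptyset$, because for the relevant curves every section of $\omega_C(1)$ vanishes along a line, so the extensions are torsion-free but never reflexive --- a phenomenon your proposal does not anticipate; (ii) compute $\extd^2(F,F)=1$ \emph{identically} on the stratum coming from extremal quartics, which is Proposition \ref{prop: extremal-ext}, proved by explicit free resolutions and the delicate vanishing Lemma \ref{lem: vanish-extremal}; this, not a semicontinuity or jump-locus argument, is what exhibits the codimension-one singular locus; and (iii) classify degree-$6$ genus-$2$ curves with the prescribed cohomology (Lemma \ref{lem: curv6cub}, Lemma \ref{lem: curve-1800}, via liaison and Rao modules) to show the reduction is the closure of the single $27$-dimensional stratum. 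A smaller slip: the fibres of the map to the Hilbert scheme are open subsets of $\mathbb{P}\big(\Ext^1(\sI_Y(2\ell-1),\op3)\big)$ only where $h^0(F(\ell))=1$; in general the pair space fibres over the moduli with fibre $\mathbb{P}\big(H^0(F(\ell))\big)$, which is positive-dimensional, e.g.\ $h^0(F(2))=4$ for the general sheaf with $c_3=8$, and your dimension counts must account for this as in equation \eqref{eq: dimform-red}.
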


Further details on the structure of $\calr(-1,4,8)$, $\calr(-1,4,10)$ and $\calr(-1,4,12)$ are provided in Theorem \ref{thm: -148}, Theorem \ref{thm: -1,10} and Theorem \ref{thm: -1412}, respectively. The remarkable feature of Theorem \ref{thm: -1} is that the moduli schemes $\calr(-1,4,c_3)$ for $c_3=8,10,12$ are much simpler than their counterparts with $c_1=0$.

This paper is organized as follows. The initial sections \ref{sec: spectrum} and \ref{sec: serre} contain a revision of mostly known facts about the spectra for stable rank-$2$ reflexive sheaves and the Serre correspondence between such sheaves and space curves, with a focus on sheaves on $c_2=4$. The main new contribution here is classifying all spectra for stable rank-$2$ reflexive sheaves with $c_2=4$, cf. Tables \ref{tab: spec c1=0} and \ref{tab: spec c1=-1}. Sections \ref{sec: extremal} and \ref{sec: unobsheaves} establish some essential technical results. Sections \ref{sec: even-low} and \ref{sec: odd-low} present examples of sheaves with $c_3 \leq 6$, while the remaining sections provide a complete description of the moduli spaces $\calr(c_1,4,c_3)$ for $c_3\geq 8$. 

Some of our results were driven by empiric evidence gained through the Macaulay2 computer algebra system \cite{M2}. 
{These computations are included in ancillary files} \verb|refshsetup.m2| and \verb|examples.m2|, {available at \url{https://github.com/alannmuniz/refsheaves.git}.}


\subsection*{Acknowledgments}
MJ is supported by the CNPQ grant number 305601/2022-9, the FAPESP Thematic Project number 2018/21391-1, and the FAPESP-ANR project number 2021/04065-6. AM was supported by INCTmat/MCT/Brazil, CNPq grant number 160934/2022-2. {We warmly thank the anonymous referee for many important comments and suggestions.}


\section{Spectrum and cohomology of rank-\texorpdfstring{$2$}{2} reflexive sheaves} \label{sec: spectrum}

We start by collecting basic facts about the rank-$2$ reflexive sheaves on $\p3$, with Chern classes denoted by $c_1,c_2$ and $c_3$. We will assume that the sheaves are normalized, i.e., $c_1 \in \{-1, 0\}$, and that $c_2 = 4$. In this case, the Hirzebruch--Riemann--Roch Theorem reads as follows.

\begin{lemma}[HRR]\label{lem: HRR}
Let $F$ be a normalized rank-$2$ reflexive sheaf with second Chern class $c_2 = 4$. Then,
\[
\chi(F(l)) = \begin{cases} \displaystyle
\frac{1}{3}l^3 + 2l^2 - \frac{1}{3}l + \frac{c_3 }{2} -6, & \text{if } c_1 = 0; \\[5pt] \displaystyle
\frac{1}{3}l^3 + \frac{3}{2}l^2 - \frac{11}{6}l + \frac{c_3 }{2} -5, & \text{if } c_1 = -1.
\end{cases}
\]
In particular, $c_3$ is even.
\end{lemma}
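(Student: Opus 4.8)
The plan is to apply the Hirzebruch--Riemann--Roch theorem on $\p3$ directly and read off the coefficient of the top class $H^3$ (the hyperplane class), normalized so that $\int_{\p3}H^3 = 1$. The first step is to record the Todd class: from the Euler sequence $0 \to \op3 \to \op3(1)^{\oplus 4} \to \tp3 \to 0$ one computes $\mathrm{td}(\p3) = \bigl(H/(1-e^{-H})\bigr)^4$, which, truncated modulo $H^4$ (since $H^4 = 0$ on $\p3$), equals $1 + 2H + \tfrac{11}{6}H^2 + H^3$.

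Next I would write out the Chern character of the twist. For a rank-$2$ sheaf with Chern classes $(c_1,c_2,c_3)$ one has $\mathrm{ch}(F) = 2 + c_1 H + \tfrac12(c_1^2 - 2c_2)H^2 + \tfrac16(c_1^3 - 3c_1 c_2 + 3c_3)H^3$, and $\mathrm{ch}(F(l)) = \mathrm{ch}(F)\cdot e^{lH}$, with $e^{lH}$ also truncated at $H^3$. Pairing against the Todd class, the Euler characteristic is the $H^3$-coefficient of the product, namely $\chi(F(l)) = \mathrm{ch}_3(F(l)) + 2\,\mathrm{ch}_2(F(l)) + \tfrac{11}{6}\,\mathrm{ch}_1(F(l)) + 2$, where $\mathrm{ch}_i(F(l))$ denotes the coefficient of $H^i$. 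Substituting $c_2 = 4$ and then the two normalizations $c_1 = 0$ and $c_1 = -1$, and collecting powers of $l$, produces the two displayed cubics.

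For the parity claim I would simply evaluate at $l = 0$: since $\chi(F) = \chi(F(0))$ is an integer, being the Euler characteristic of a coherent sheaf on a projective variety, and it equals $\tfrac{c_3}{2} - 6$ when $c_1 = 0$ and $\tfrac{c_3}{2} - 5$ when $c_1 = -1$, we get $\tfrac{c_3}{2} \in \Z$, i.e.\ $c_3$ is even. (As a consistency check, the non-constant part of each cubic is integer-valued on $\Z$, since $\tfrac13(l^3 - l)$ is an integer.) The computation is entirely routine; the only care needed is the bookkeeping in the two truncated power-series products, tracking which cross-terms land in the $H^3$-coefficient. There is no genuine obstacle here: once the constant term is isolated, the parity statement is immediate from integrality of the Euler characteristic.
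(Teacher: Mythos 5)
Your proposal is correct and is exactly the routine Hirzebruch--Riemann--Roch computation the paper has in mind: the lemma is stated without proof as a standard consequence of HRR on $\p3$ (cf.\ \cite[Theorem 2.3]{H2}), and your Todd class $1+2H+\tfrac{11}{6}H^2+H^3$, the Chern character with $\mathrm{ch}_3(F)=\tfrac16(c_1^3-3c_1c_2+3c_3)$, and the resulting coefficients all check out for both $c_1=0$ and $c_1=-1$. The parity argument via integrality of $\chi(F)$ at $l=0$ is likewise the intended one.
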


Given a normalized rank-$2$ reflexive sheaf $F$ on $\p3$, such that $h^0(F(-1)) = 0$, there exists a unique list of integers $\{k_1, \dots, k_{c_2(F)}\}$, encoding partial information on its cohomology, called the \emph{spectrum} of $F$. It was first defined by Barth and Elencwajg \cite{MR0517517} for locally free sheaves and later extended to reflexive sheaves by Hartshorne \cite[Section 7]{H2}. The following properties characterize the spectrum of $F$: 
\begin{enumerate}
 \item[S1)] \label{h1spec} 
 $h^1(F(p)) = \sum_{i} h^0(\p1, \op1(k_i+p+1))$, for each $p\leq -1$;
 \item[S2)] \label{h2spec} 
 $h^2(F(p)) = \sum_{i} h^1(\p1, \op1(k_i+p+1))$, for each $p\geq -3$ if $c_1 = 0$ and $p\geq -2$ if $c_1 = -1$.
\end{enumerate}
Moreover, $c_3 =-2\sum k_i + c_1c_2$.

In the other direction, for given $c_1$, $c_2$, and $c_3$, one may determine all the possible spectra using the following criteria, cf. \cite[Theorem 7.5]{H2}.

\begin{theorem}[Hartshorne] \label{thm: boundspec}
Let $F$ be a normalized rank-$2$ reflexive sheaf such that $h^0(F(-1))= 0$ and let $\{k_i\}$ be its spectrum, then:
\begin{enumerate}
 \item If there exists $k>0$ in the spectrum, then $1, 2, \dots, k$ also occur in the spectrum;
 \item If there exists $k<-1$ in the spectrum, then $-1, -2, \dots, k$ also occur if $c_1 = 0$, and $-2, \dots, k$ also occur if $c_1 = -1$.
\end{enumerate}
In addition, if $F$ is stable, then:
\begin{enumerate}
 \item If there exists $k>0$ in the spectrum, then $0$ also occurs;
 \item If there exists $k<-1$ in the spectrum, then $-1$ also occurs and, if $c_1 = 0$ then, either $0$ also occurs or $-1$ occurs at least twice.
\end{enumerate}
\end{theorem}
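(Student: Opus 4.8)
The plan is to translate the purely combinatorial assertions about $\{k_i\}$ into positivity statements about the cohomology functions $p \mapsto h^1(F(p))$ and $p \mapsto h^2(F(p))$ using the characterizing identities S1 and S2, and then to establish those positivity statements geometrically by restricting $F$ to a general plane and a general line.

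First I would encode the positive part of the spectrum through the counting function $a_j := \#\{i : k_i \ge j\}$. Taking the first difference of S1 gives $a_j = h^1(F(-j-1)) - h^1(F(-j-2))$ for $j \ge 0$, so that the multiplicity of $j$ in the spectrum is the second difference $a_j - a_{j+1} = h^1(F(-j-1)) - 2h^1(F(-j-2)) + h^1(F(-j-3))$. Condition (1) — that a positive value $k$ forces $1, \dots, k$ to occur — is then exactly the assertion that $a_j$ is strictly decreasing on the range where it is positive, i.e. that this second difference is $\ge 1$ for each $1 \le j \le k$. I would set up the analogous counting function $b_j := \#\{i: k_i \le -j\}$ for the negative part and, using S2, reduce condition (2) to the same kind of second-difference positivity for $h^2(F(\cdot))$.

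Next comes the geometric input. Restricting along $0 \to F(p-1) \to F(p) \to G(p) \to 0$ for a general plane $H$, with $G := F|_H$ a rank-$2$ bundle on $\p2$, expresses the first differences of $h^1(F(\cdot))$ and $h^2(F(\cdot))$ in terms of the kernels and cokernels of the multiplication maps $\times h$ on $H^1_*(F)$ and $H^2_*(F)$, and relates the second differences to the cohomology $h^1(G(\cdot))$ of the plane restriction; a further restriction to a general line brings everything down to $\p1$, where the spectral bundle $\bigoplus_i \op1(k_i)$ lives. The heart of the matter — and the step I expect to be the main obstacle — is to prove the needed connectedness: that these multiplication maps have maximal rank in the relevant range, equivalently that the support of the spectrum has no gaps. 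This is not formal, since an arbitrary bundle on $\p1$ can have arbitrary splitting type, so the no-gap property must be extracted from the way the spectral bundle is built from $F$ rather than from its abstract structure. I would prove it by a Grauert--M\"ulich / Barth-type argument on the general plane restriction $G$, using $h^0(G(-1)) = 0$ to control $H^0_*(G)$ and thereby force the multiplication maps to be surjective (respectively injective) beyond the appropriate twist.

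Finally, the refinements for stable $F$. For the positive part I would invoke stability in the form $h^0(F) = 0$ (when $c_1 = 0$) together with the given $h^0(F(-1)) = 0$; feeding these vanishings into the restriction sequence extends the connectedness one further step down to $j = 0$, forcing the value $0$ into the spectrum whenever a positive value occurs. For the negative part I would exploit the duality $h^2(F(p)) \cong h^1(F^\vee(-p-4))$ together with $F^\vee \cong F(-c_1)$: this identifies the negative part of the spectrum of $F$ with the positive part of the spectrum of the normalized dual, so conditions (2) follow by applying the already-proven statements to the dual. The twist needed to normalize $F^\vee$ depends on $c_1$, and tracking it produces exactly the asymmetry between the ranges $-1, \dots, k$ (for $c_1 = 0$) and $-2, \dots, k$ (for $c_1 = -1$), as well as the special boundary clause that, when $c_1 = 0$, either $0$ occurs or $-1$ occurs at least twice.
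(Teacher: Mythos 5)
The paper does not prove this theorem: it is quoted verbatim from Hartshorne \cite[Theorem 7.5]{H2}, so your proposal must be measured against Hartshorne's original argument. Your treatment of the \emph{positive} part of the spectrum is essentially his route: encode occurrences as second differences of $p\mapsto h^1(F(p))$ via S1, restrict to a general plane and line, and prove a no-gap/connectedness statement for the multiplication maps on $H^1_*(F)$ (your one imprecision there is that $h^0(F(-1))=0$ on $\p3$ does not formally give $h^0(F_H(-1))=0$ on a general plane $H$; this restriction statement is itself a nontrivial ingredient that Hartshorne supplies).

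The genuine gap is your duality step for the \emph{negative} part. For a reflexive sheaf that is not locally free, Serre duality gives $H^2(F(p))^* \simeq \Ext^1(F,\op3(-4-p))$, and the local-to-global Ext sequence inserts the term $H^0(\inext^1(F,\op3)(-4-p))$, where $\inext^1(F,\op3)$ is a skyscraper sheaf of length $c_3$ supported at the singular points of $F$. Hence $h^2(F(p)) \neq h^1(F^\vee(-4-p))$ whenever $c_3>0$, and your claimed identification of the negative part of the spectrum of $F$ with the positive part of the spectrum of the normalized dual fails exactly in the cases this theorem is about. The failure is visible concretely: for $c_1=0$ one has $F^\vee\simeq F$, so your identification would force the spectrum to be symmetric, i.e.\ $\sum_i k_i=0$ and thus $c_3=-2\sum_i k_i=0$; this contradicts every sheaf in the paper with $c_3>0$, e.g.\ the spectrum $\{-1,-1,-1,-1\}$ with $c_3=8$ in Table \ref{tab: spec c1=0}. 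Indeed, the whole asymmetry between the two lists in the statement (and between the $c_1=0$ and $c_1=-1$ ranges) is a manifestation of this skyscraper correction, so it cannot be recovered by ``tracking the twist'' in a duality that does not hold. Hartshorne proves the negative-part assertions by a separate, direct analysis of the graded module $H^2_*(F)$ parallel to the $H^1$ argument, and the final stable clause --- that for $c_1=0$ either $0$ occurs or $-1$ occurs at least twice --- by a stability argument (producing a contradiction with an unstable plane/section when it fails), not as a formal consequence of the positive-part statements.
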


Fixing $c_2 = 4$ and $c_1\in \{ -1, 0\}$, we get the following lists of spectra. Recall that $c_3 \leq c_2^2 + (1+c_1)(2-c_2)$ for a stable $F$, cf. \cite[Theorem 8.2]{H2}.

\begin{table}[H] \def\arraystretch{1.5}
\begin{tabular}{|c|c|}
\hline
$c_3$ & spectra \\ \hline\hline
\multirow{2}{*} 0 & $\{0, 0, 0, 0\}$ \\ \cline{2-2} 
 & $\{-1, 0, 0, 1\}$ \\ \hline
\multirow{2}{*}{2} & $\{-1, 0, 0, 0\}$ \\ \cline{2-2} 
 & $\{-1, -1, 0, 1\}$ \\ \hline
\multirow{2}{*}{4} & $\{-1, -1, 0, 0\}$ \\ \cline{2-2} 
 & $\{-2, -1, 0, 1\}$ \\ \hline
\multirow{2}{*}{6} & $\{-1, -1, -1, 0\}$ \\ \cline{2-2} 
 & $\{-2, -1, 0, 0\}$ \\ \hline
\end{tabular}\quad \quad
\begin{tabular}{|c|c|}\hline
$c_3$ & spectra \\ \hline\hline
\multirow{2}{*} 8 & $\{-1, -1, -1, -1\}$ \\ \cline{2-2} 
 & $\{-2, -1, -1, 0\}$ \\ \hline
\multirow{2}{*} {10} & $\{-2, -1, -1, -1\}$ \\ \cline{2-2}
 & $*\{-2, -2, -1, 0\}$ \\ \hline
\multirow{2}{*}{12} & $\{-3, -2, -1, 0\}$ \\ \cline{2-2} 
 & $\{-2, -2, -1, -1\}$ \\ \hline
14 & $\{-3, -2, -1, -1\}$ \\ \hline
\end{tabular}
\caption{Possible spectra for stable rank-$2$ reflexive sheaves with $c_1=0$ and $c_2=4$. The spectrum marked with an asterisk is not realized by a stable rank-$2$ reflexive sheaf, cf. Remark \ref{rem: badspectra}.
} 
\label{tab: spec c1=0}
\end{table}

\begin{table}[H] \def\arraystretch{1.5}
\begin{tabular}{|c|c|}
\hline
$c_3$ & spectra \\ \hline\hline
\multirow{2}{*}{{0}} & $\{-1, -1, 0, 0\}$ \\ \cline{2-2} 
 & $\{-2, -1, 0, 1\}$ \\ \hline
\multirow{2}{*}{2} & $\{-1, -1, -1, 0\}$ \\ \cline{2-2} 
 & $\{-2, -1, 0, 0\}$ \\ \hline
\multirow{2}{*}{4} & $\{-1, -1, -1, -1\}$ \\ \cline{2-2} 
 & $\{-2, -1, -1, 0\}$ \\ \hline
\multirow{2}{*}{6} & $\{-2, -1, -1, -1\}$ \\ \cline{2-2} 
 & $\{-2, -2, -1, 0\}$ \\ \hline
\end{tabular}\quad \quad
\begin{tabular}{|c|c|}\hline
$c_3$ & spectra \\ \hline\hline
\multirow{2}{*}{8} & $\{-2, -2, -1, -1\}$ \\ \cline{2-2} 
 & $\{-3, -2, -1, 0\}$ \\ \hline
\multirow{2}{*}{10} & $\{-2, -2, -2, -1\}$ \\ \cline{2-2} 
 & ${\{-3, -2, -1, -1\}}$ \\ \hline
12 & $\{-3, -2, -2, -1\}$ \\ \hline
14 & $*\{-3, -3, -2, -1\}$ \\ \hline
16 & $\{-4, -3, -2, -1\}$ \\ \hline
\end{tabular}
\caption{Possible spectra for stable rank-$2$ reflexive sheaves with $c_1=-1$ and $c_2=4$. The spectrum marked with an asterisk is not realized by a stable rank-$2$ reflexive sheaf, cf. Remark \ref{rem: badspectra}.
} 
\label{tab: spec c1=-1}
\end{table}

\begin{remark}\label{rem: badspectra}
There is no stable rank-two reflexive sheaf whose spectrum is either $\{-2, -2, -1, 0\}$, when $c_1 = 0$, or $\{-3, -3, -2, -1\}$, when $c_1=-1$. The first claim follows from \cite[Example 5.1.3]{H4}. For the second, it follows from \cite[Theorem A]{MR} that $\calr(-1, 4, 14)$ is empty, cf. also \cite[Proposition 2]{Ch3} and \cite[Example 5.1.4]{H4}. Both cases can be proved independently by studying Serre's correspondence and the constraints in cohomology discussed below.
\end{remark}

From Tables \ref{tab: spec c1=0} and \ref{tab: spec c1=-1} and the properties of the spectrum and the Hilbert Polynomials discussed above, we derive the following immediate results.

\begin{lemma} \label{lem: h2}
If $F$ is a stable rank-$2$ reflexive sheaf with $c_2=4$, then $h^2(F(2))=0$ and $h^2(F(1))\le1$ with equality only if $c_1=-1$ and $c_3=16$.
\end{lemma}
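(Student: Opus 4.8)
The plan is to read both assertions directly off property S2 of the spectrum, combined with the classification in Tables \ref{tab: spec c1=0} and \ref{tab: spec c1=-1}. First I would observe that stability forces $h^0(F(-1))=0$ (a nonzero section of $F(-1)$ would embed $\op{3}(1)$ into $F$, violating the slope bound), so $F$ has a well-defined spectrum $\{k_1,\dots,k_4\}$, which must be one of the lists in the two tables. The only analytic ingredient is the elementary computation on the line: $h^1(\p1,\op1(m))$ vanishes unless $m\le -2$, in which case it equals $-m-1$. I do not expect a genuine obstacle here; the argument is a finite inspection of the tables, and the only points needing care are verifying that the twists $p=1,2$ lie in the range where S2 applies and correctly evaluating the line cohomology.

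The core step is to apply S2 at the relevant twists. Since $p=2$ and $p=1$ both satisfy $p\ge -3$ (for $c_1=0$) and $p\ge -2$ (for $c_1=-1$), property S2 yields
\[
h^2(F(p)) = \sum_{i} h^1\big(\p1,\op1(k_i+p+1)\big).
\]
By the line formula the $i$-th summand is nonzero precisely when $k_i+p+1\le -2$, that is $k_i\le -p-3$, and in that case it contributes $-(k_i+p+1)-1 = -k_i-p-2$.

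For $h^2(F(2))$ the survival condition reads $k_i\le -5$. Scanning both tables, every spectrum entry satisfies $k_i\ge -4$ (the minimum value $-4$ occurring only for the spectrum $\{-4,-3,-2,-1\}$, with $c_1=-1$, $c_3=16$), so no summand survives and $h^2(F(2))=0$. For $h^2(F(1))$ the threshold becomes $k_i\le -4$. Again inspecting the tables, the value $-4$ appears in exactly one spectrum, namely $\{-4,-3,-2,-1\}$, which has $c_1=-1$ and $c_3=-2\sum k_i+c_1c_2 = 20-4 = 16$; there the single entry $k_i=-4$ contributes $-(-4)-1-2 = 1$ while the other three entries contribute $0$. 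For every remaining spectrum all entries satisfy $k_i\ge -3$, so the sum is empty and $h^2(F(1))=0$. This gives $h^2(F(1))\le 1$ with equality precisely when $c_1=-1$ and $c_3=16$, as claimed.
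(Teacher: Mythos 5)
Your proof is correct and takes essentially the same route as the paper's, which likewise reads both claims off property S2 together with an inspection of Tables \ref{tab: spec c1=0} and \ref{tab: spec c1=-1}: nonvanishing of $h^1(\p1,\op1(k_i+p+1))$ requires $k_i\le -5$ for $p=2$ and $k_i\le -4$ for $p=1$, and $-5$ never occurs while $-4$ occurs only in $\{-4,-3,-2,-1\}$ with $c_1=-1$, $c_3=16$. The paper compresses this into one line; you merely make the twist ranges for S2, the line-cohomology threshold, and the contribution $-k_i-p-2=1$ explicit.
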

\begin{proof}
This follows from property \ref{h2spec} and checking in Tables \ref{tab: spec c1=0} and \ref{tab: spec c1=-1} that $-5$ does not appear in any spectrum, and $-4$ only occurs if $c_1=-1$ and $c_3=16$.
\end{proof}

\begin{lemma} \label{lem: h0}
If $F$ is a stable rank-$2$ reflexive sheaf with $c_2=4$, then $h^0(F(2))> 0$. Moreover, if either when $c_1=0$ and $c_3\ge10$ or $c_1=-1$ and $c_3\ge12$, then $h^0(F(1))>0$.
\end{lemma}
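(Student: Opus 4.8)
The plan is to bound $h^0$ from below by the Euler characteristic. Since $h^1(F(l))\ge 0$ and $h^3(F(l))\ge 0$ for every $l$, the Riemann--Roch identity $\chi(F(l))=h^0(F(l))-h^1(F(l))+h^2(F(l))-h^3(F(l))$ gives at once
\[
h^0(F(l))\ \ge\ \chi(F(l))-h^2(F(l)).
\]
I will apply this with $l=2$ and $l=1$, reading $\chi$ off Lemma \ref{lem: HRR} and controlling $h^2$ with Lemma \ref{lem: h2}. Note that no information about $h^1$ or $h^3$ is needed beyond their nonnegativity.

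For $l=2$, Lemma \ref{lem: HRR} yields $\chi(F(2))=4+\tfrac{c_3}{2}$ when $c_1=0$ and $\chi(F(2))=\tfrac{c_3}{2}$ when $c_1=-1$, while Lemma \ref{lem: h2} gives $h^2(F(2))=0$. Hence $h^0(F(2))\ge\chi(F(2))>0$ in every case with $c_1=0$ (where $\chi(F(2))\ge4$) and in every case with $c_1=-1$ and $c_3\ge 2$ (where $\chi(F(2))\ge1$).

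For the ``moreover'' part I compute $\chi(F(1))=\tfrac{c_3}{2}-4$ when $c_1=0$ and $\chi(F(1))=\tfrac{c_3}{2}-5$ when $c_1=-1$. By Lemma \ref{lem: h2}, $h^2(F(1))=0$ unless $c_1=-1$ and $c_3=16$, in which case $h^2(F(1))\le 1$. Therefore, for $c_1=0$ and $c_3\ge 10$ one gets $h^0(F(1))\ge \tfrac{c_3}{2}-4\ge 1$; for $c_1=-1$ and $12\le c_3\le 14$ one gets $h^0(F(1))\ge\tfrac{c_3}{2}-5\ge 1$; and for $c_1=-1$, $c_3=16$ one gets $h^0(F(1))\ge\big(\tfrac{c_3}{2}-5\big)-1=2$. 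This settles the second assertion completely.

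The only case the Euler-characteristic bound does not reach is the locally free case $c_1=-1$, $c_3=0$, where $\chi(F(2))=0$ and $h^2(F(2))=0$ force $h^0(F(2))=h^1(F(2))$; here positivity of $h^0(F(2))$ is equivalent to $h^1(F(2))\ne 0$, which the spectrum alone does not detect, since both admissible spectra $\{-1,-1,0,0\}$ and $\{-2,-1,0,1\}$ pin down $h^1(F(l))$ only for $l\le -1$. I expect this to be the main obstacle. What Riemann--Roch does give, using $h^0(F(l))=0$ for $l\le 0$ (stability) and $h^2(F(0))=h^2(F(1))=0$, are the intermediate values $h^1(F(0))=5$ and $h^1(F(1))\ge 5$; but these constraints on the $H^1_*$-module do not by themselves force $h^1(F(2))>0$. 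I therefore plan to clear this case by appealing to the classification of stable rank-two bundles with $(c_1,c_2)=(-1,4)$ due to B\u{a}nic\u{a} and Manolache \cite{BM}, exhibiting each such bundle through Serre's construction from a space curve of degree at most $6$ and thereby producing a nonzero section of $F(2)$; equivalently, one can use the constraints on Serre's correspondence and cohomology flagged in Remark \ref{rem: badspectra}.
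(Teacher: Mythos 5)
Your proposal is correct and follows essentially the same route as the paper: the Euler-characteristic bound from Lemma \ref{lem: HRR} combined with the $h^2$ control of Lemma \ref{lem: h2} settles every case except $(c_1,c_3)=(-1,0)$, which the paper, exactly as you anticipate, resolves by citing \cite[Lemma 1]{BM} (it also sketches their argument: if $h^0(F(2))=0$ then $F$ is $3$-regular and a general section of $F(3)$ vanishes along a smooth canonical curve of degree $10$ and genus $6$, which must lie on a quartic surface by Gruson--Peskine, contradicting $h^0(\sI_C(4))=h^0(F(2))=0$). You correctly isolated that locally free case as the only genuine obstacle, and your handling of the $c_3=16$ subcase via $h^2(F(1))\le 1$ matches the paper's.
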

\begin{proof}
Stability implies that $h^3(F(k))=0$ for $k\ge-4$; then, from the formulas in Lemma \ref{lem: HRR} and Lemma \ref{lem: h2}, we get
\begin{equation}\label{eq: h0F2}
\chi(F(2)) = h^0(F(2)) - h^1(F(2)) = \begin{cases} \displaystyle
\frac{c_3 }{2} +4, & \text{if } c_1 = 0; \\[10pt] \displaystyle
\frac{c_3 }{2}, & \text{if } c_1 = -1.
\end{cases} 
\end{equation}
If either $c_1=0$, or $c_1=-1$ and $c_3>0$, it immediately follows that $h^0(F(2))>0$. If $c_1=-1$ and $c_3=0$, this was proved in \cite[Lemma 1]{BM}. Their argument is as follows. Assuming by contradiction that $h^0(F(2)) = 0$, one would get $F$ is $3$-regular and a general global section of $F(3)$ vanishes along a smooth curve of degree $10$ and genus $6$. Moreover, $C$ is canonical, i.e., $\omega_C = \OO_C(1)$. However, every such curve is contained in a quartic surface (cf. \cite[p.58]{GP-genre}), which is absurd since $h^0(F(2)) = h^0(\sI_C(4))$.

The formulas in Lemma \ref{lem: HRR} also yield
\begin{equation}\label{eq: h0F1}
\chi(F(1)) = h^0(F(1)) - h^1(F(1)) + h^2(F(1)) = \begin{cases} \displaystyle
\frac{c_3}{2} - 4, & \text{if } c_1 = 0; \\[10pt] \displaystyle
\frac{c_3}{2} -5, & \text{if } c_1 = -1.
\end{cases} 
\end{equation}
When either $c_1=0$ and $c_3\ge10$, or $c_1=-1$ and $c_3\ge12$, we get $h^0(F(1))>0$ as desired. 
\end{proof}


\section{Serre's construction} \label{sec: serre}

Let $F$ be a rank-$2$ reflexive sheaf and let $\sigma \in H^0(F(k))$, for some $k\in\Z$, without zeros in codimension one. Then $\sigma$ induces an exact sequence
\begin{equation}\label{eq: twist}
0 \longrightarrow \op3 \longrightarrow F(k) \overset{\sigma^\vee}{\longrightarrow} \sI_C(2k+c_1) \longrightarrow 0
\end{equation}
where $C$ is the vanishing locus of $\sigma$; this is a curve, i.e., a locally Cohen--Macaulay scheme of pure dimension one. Note that the sequence above corresponds to a nontrivial element of $\xi \in \Ext^1(\sI_C(2k+c_1), \op3) \simeq H^0(\omega_C(4-2k-c_1))$ and that the sheaf $F$ is reflexive if and only if $\xi$, as a section of $\omega_C(4-2k-c_1)$, vanishes in dimension 0 at most.

On the other hand, given a pair $(C, \xi)$ consisting of a curve and a section in $H^0(\omega_C(4-2k-c_1))$ vanishing only in dimension $0$, one produces a pair $(F,\sigma)$ consisting of a rank-$2$ reflexive sheaf and a global section $\sigma \in H^0(F(k))$. This is the content of the Serre correspondence; for details, we refer to \cite[\S 4]{H2}. The sheaf $F$ is stable if and only if $H^0(\sI_C(k+c_1))=0$.

When $c_2(F)=4$, Lemma \ref{lem: h0} implies that it is enough to consider $k=1,2$. It may be the case that $h^0(F(1))>0$, and $F(1)$ corresponds to a curve $C$ satisfying: 
\begin{equation}\label{eq: twist1-deg+g}
\deg(C) = c_1 + 5 \quad {\rm and} \quad p_a(C)= \dfrac{c_3}{2}+\dfrac{1}{2}(c_1+5) (c_1-2) +1 .
\end{equation}
Due to stability, $h^0(F) = h^0(\sI_C(2+c_1))=0$, i.e., $C$ is not contained in a surface of degree $c_1+1$ or less. 

When $h^0(F(1))=0$, the sheaf $F(2)$ corresponds to a curve $C$ satisfying: 
\begin{equation}\label{eq: twist2-deg+g}
\deg(C) = 2c_1 + 8 \quad {\rm and} \quad p_a(C)= \dfrac{c_3}{2}+(c_1+4)c_1+1.
\end{equation}
Moreover, $C$ is not contained in a surface of degree $c_1+2$ or less.

Also important is to consider a relative version of the Serre correspondence, i.e., the correspondence between families of pairs. This is well-known to specialists, though rarely spelled out in articles. Let us explain this in some detail.

Given a flat family of curves $\calc $ regarded as a subvariety of the appropriate Hilbert Scheme $\hilb^{d,g}(\p3)$ of curves of degree $d$ and arithmetic genus $g$, let $\mathbf{C}\subset\calc \times\p3$ be the universal curve and $\sI_{\mathbf{C}}$ be the universal ideal sheaf on $\calc \times\p3$. Let $\pi_1$ and $\pi_2$ be the projections of $\calc \times\p3$ onto its first and second factors, respectively. If we assume that $\calc $ is integral and that the function $C\mapsto h^0(\omega_C(4-2k-c_1))$ is constant on $\calc $, then the sheaf of relative $\Ext$ groups
\[
\mathcal{E} \coloneqq \inext^1_{\pi_1}(\sI_{\mathbf{C}}(k+c_1), \pi_2^*\op3(-k))
\]
is a vector bundle over $\calc $ parameterizing pairs $(C,\xi)$. Indeed, the complex $\mathcal{L}^{\bullet}$ in \cite[Corollary 1.2]{lange} has all the properties needed to prove a version of Grauert's Theorem \cite[III Corollary 12.9]{HART-AG} for the variation of $\Ext$. In the analytic category, this was done by B\u{a}nic\u{a}, Putinar, and Schumacher in \cite[Satz 3, (ii)]{BPS}.

Moreover, for $k >0$ and $c_1\in\{0,1\}$, we have $\Hom(\sI_C(k+c_1), \op3(-k)) = 0$ for every $C$. By \cite[Corollary 4.5]{lange}, there exists a universal extension on $\PE\times \p3$: 
\[
0 \longrightarrow p_2^*\op3(-k)\otimes p_1^*\OO_{\PE}(1) \longrightarrow \mathbf{F} \longrightarrow (\rho\times 1)^*\sI_{\mathbf{C}}\otimes p_2^*\op3(k+c_1) \longrightarrow 0
\]
where $p_i$ are the canonical projections of $\PE \times \p3$ and $\rho \colon \PE \to \calc $ is the structural morphism. Then $\mathbf{F}$ is a flat family of torsion-free sheaves. We are interested in the following open subset of $\PE$: 
\[
 \mathcal{U} \coloneqq \big\{(C,\xi) \in \PE \mid {\rm either}~ \dim(\xi)_0 = 0 ~{\rm or}~ (\xi)_0=\emptyset \big\} , 
\]
where $(\xi)_0$ denotes the zero locus of $\xi$ as a section of $\omega_C(4-2k-c_1)$. Hence, $\mathbf{F}_{\mathcal{U}}$ is a flat family of reflexive sheaves. If, moreover, $h^0(\sI_C(k+c_1)) = 0$ for every $C\in \calc $, then $\mathbf{F}_{\mathcal{U}}$ is a family of stable reflexive sheaves. We obtain a morphism 
\[
\Psi\colon \mathcal{U} \longrightarrow \calr(c_1,c_2,c_3),
\]
where $c_2$ and $c_3$ depend on $d$, $g$ and $k$, as in \cite[Theorem 4.1]{H2}. The fiber over $F$ can be identified with an open subset of $\mathbb{P}\big(H^0(F(k))\big)$. The closure of the image $\calf \coloneqq \overline{\Psi(\mathcal{U})}$ is an integral scheme parameterizing sheaves that correspond to curves in $\calc $. To summarize this discussion, we state the following proposition.

\begin{proposition}\label{prop: Serre-families}
Fix $k\in \Z_{>0}$ and $c_1\in \{-1,0\}$. Let $\calc \in \hilb^{d,g}(\p3)$ be an integral family of curves of degree $d$ and arithmetic genus $g$ such that $h^0(\omega_C(4-2k-c_1))$ and $h^0(\sI_C(2k+c_1))$ are constant on $\calc $, and $h^0(\sI_C(k+c_1)) = 0$ for every $C\in \calc $. Then Serre correspondence gives an irreducible family of sheaves $\mathcal{F} \subset \calr(c_1,c_2,c_3)$. Moreover, the dimensions are related by the following formula: 
\begin{equation}\label{eq: dimform-red}
\dim \calf + h^0(F(k)) = \dim\calc + h^0(\omega_C(4-2k-c_1)). 
\end{equation}
\end{proposition}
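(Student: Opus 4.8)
The plan is to extract the dimension formula \eqref{eq: dimform-red} from the properties of the morphism $\Psi\colon \mathcal{U} \to \calr(c_1,c_2,c_3)$ together with the structure of $\mathcal{U}$ as an open subset of the projectivized bundle $\PE$. The hypotheses are arranged precisely so that the machinery preceding the statement applies verbatim: the constancy of $h^0(\omega_C(4-2k-c_1))$ guarantees (via the Grauert-type base-change result cited from \cite{lange} and \cite{BPS}) that $\mathcal{E} = \inext^1_{\pi_1}(\sI_{\mathbf{C}}(k+c_1), \pi_2^*\op3(-k))$ is a genuine vector bundle on $\calc$, the vanishing $h^0(\sI_C(k+c_1))=0$ forces $F$ to be stable and also ensures $\Hom(\sI_C(k+c_1),\op3(-k))=0$ so that the universal extension exists, and the constancy of $h^0(\sI_C(2k+c_1))$ will control the fiber dimension of $\Psi$. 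First I would record that by construction $\mathcal{U}$ is irreducible, being a nonempty open subset of the projective bundle $\PE$ over the integral base $\calc$, and that $\mathbf{F}_{\mathcal{U}}$ is a flat family of stable reflexive sheaves, whence $\calf = \overline{\Psi(\mathcal{U})}$ is integral.

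\textbf{The dimension count.}
The heart of the argument is a fiber-dimension computation for $\Psi$. I would first compute $\dim \mathcal{U}$. Since $\mathcal{U}$ is open and dense in $\PE$, we have $\dim\mathcal{U} = \dim\PE = \dim\calc + (\rk\mathcal{E} - 1)$, and by the definition of $\mathcal{E}$ as the relative $\Ext^1$ its rank at a point $C$ is $\extd^1(\sI_C(k+c_1),\op3(-k)) = h^0(\omega_C(4-2k-c_1))$, using the identification noted after \eqref{eq: twist}. Thus
\begin{equation*}
\dim\mathcal{U} = \dim\calc + h^0(\omega_C(4-2k-c_1)) - 1.
\end{equation*}
Next I would identify the generic fiber of $\Psi$. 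By the discussion preceding the statement, the fiber over a sheaf $F$ is an open subset of $\mathbb{P}(H^0(F(k)))$, parameterizing the sections $\sigma$ whose vanishing locus is a curve in $\calc$ with the prescribed extension class; its dimension is therefore $h^0(F(k)) - 1$. Since $\calf = \overline{\Psi(\mathcal{U})}$ and $\mathcal{U}$ is irreducible, the standard fiber-dimension theorem gives
\begin{equation*}
\dim\calf = \dim\mathcal{U} - \big(h^0(F(k)) - 1\big),
\end{equation*}
for $F$ general in $\calf$. Substituting the expression for $\dim\mathcal{U}$ and rearranging yields exactly \eqref{eq: dimform-red}; the two $-1$'s cancel.

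\textbf{Main obstacle.}
The step I expect to require the most care is justifying that the fiber of $\Psi$ over a \emph{general} $F$ is full-dimensional, i.e.\ equal to $h^0(F(k))-1$ rather than something smaller. A priori the fiber is only an \emph{open} subset of $\mathbb{P}(H^0(F(k)))$: a section $\sigma \in H^0(F(k))$ lands in $\mathcal{U}$ only if its zero scheme is one-dimensional (no divisorial component) and actually lies in the chosen family $\calc$ rather than in some other component of the Hilbert scheme with the same $(d,g)$. I would handle this by observing that these are open conditions, so it suffices to exhibit one section of $F$ whose vanishing locus lies in $\calc$ — which holds because $F$ came from $\Psi$ — and then appeal to the upper semicontinuity of $h^0(F(k))$ together with the constancy hypotheses to conclude the fiber dimension is generically constant and equal to $h^0(F(k))-1$. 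The constancy of $h^0(\sI_C(2k+c_1))$ is what pins down $h^0(F(k))$ via the long exact cohomology sequence of \eqref{eq: twist}, ensuring that the invariant $h^0(F(k))$ appearing in \eqref{eq: dimform-red} is well defined on the generic sheaf. Once these openness and constancy points are settled, the formula follows formally from the fiber-dimension theorem applied to the dominant morphism $\mathcal{U} \to \calf$.
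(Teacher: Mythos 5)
Your argument is correct and is essentially the paper's own: Proposition \ref{prop: Serre-families} is stated there precisely as a summary of the immediately preceding construction --- $\mathcal{E}$ is a vector bundle of rank $\extd^1(\sI_C(k+c_1),\op3(-k))=h^0(\omega_C(4-2k-c_1))$ by the Lange/B\u{a}nic\u{a}--Putinar--Schumacher base-change results, $\mathcal{U}$ is irreducible as a nonempty open subset of $\PE$ over the integral base $\calc$, the fibers of $\Psi$ are nonempty open (hence full-dimensional) subsets of $\mathbb{P}(H^0(F(k)))$ with $h^0(F(k))=1+h^0(\sI_C(2k+c_1))$ constant by hypothesis --- so \eqref{eq: dimform-red} follows from the fiber-dimension theorem exactly as you compute, and your flagged ``main obstacle'' is resolved at the same level of rigor as the paper's own assertion that the fiber is an open subset of $\mathbb{P}(H^0(F(k)))$. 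One cosmetic slip: $\Hom(\sI_C(k+c_1),\op3(-k))\simeq H^0\big(\inhom(\sI_C,\op3)(-2k-c_1)\big)=H^0(\op3(-2k-c_1))=0$ holds automatically because $k>0$ and $c_1\ge-1$, not because of the hypothesis $h^0(\sI_C(k+c_1))=0$, which is used only to guarantee stability of the resulting sheaves.
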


\begin{remark}\label{rem: dimformula}
The formula in display \eqref{eq: dimform-red} relates the dimensions of a family of curves to the corresponding family of sheaves. We refer to \cite{Kleppe} for a deformation theoretic approach to Serre correspondence. In particular, for a formula similar to \eqref{eq: dimform-red} regarding Zariski tangent spaces to the respective moduli spaces, cf. \cite[Theorem 2.1]{Kleppe}.
\end{remark}

Finally, we also implemented the Serre correspondence in the computer algebra system Macaulay2 \cite{M2}. In the ancillary file \verb|refshsetup.m2|, we have the function \verb|Serre(C, k)|, which computes a random extension of $\sI_C$ by $\op3(-k)$, if it exists. This computational approach will be used to provide explicit examples throughout the text. 


\subsection{Lemmas about curves}
We will narrow down the types of curves relevant to our work. As usual, by a curve, we mean a Cohen-Macaulay subscheme of $\p3$ of pure dimension one. One tool we will need to describe these curves is Liaison (or Linkage) Theory; our primary reference is \cite[III]{MDP}.

Let $C\subset \p3$ be a curve and $X$ be the complete intersection of two surfaces of degrees $s$ and $t$ containing $C$. Applying $\inhom_{\op3}(-, \calo_X)$ to the canonical map $\op3 \twoheadrightarrow \calo_C$, we get that 
$\inhom_{\op3}(\calo_C, \calo_X) \subset \calo_X$ is the ideal sheaf $\sI_{\Gamma/X}$ of a curve $\Gamma\subset X$. Conversely $\sI_{C/X} = \inhom_{\op3}(\calo_\Gamma, \calo_X)$. We say that $C$ and $\Gamma$ are \emph{linked} by $X$. These curves must satisfy the following properties: 
\begin{enumerate}[label={P.\arabic*}, ref=P.\arabic*]
 \item \label{P1} $\deg( C) + \deg( \Gamma) = st$ and $p_a(C)-p_a(\Gamma) = (\deg( C) - \deg( \Gamma))\frac{1}{2}(s+t -4)$;
 \item \label{P2} For any $n\in \Z$, we have $h^1(\sI_C(n)) = h^1(\sI_\Gamma(s+t-n-4)$;
 \item \label{P3} For any $n\in \Z$, we have $h^0(\sI_C(n)) = h^0(\sI_X(n))+ h^1(\OO_\Gamma(s+t-n-4))$;
 \item \label{P4} $M_C \simeq M_{\Gamma}^*(4-s-t) \coloneqq \Ext^4_R(M_{\Gamma}, R(-s-t))$, where $R \coloneqq H^0_*(\op3) $. Here, the actual module structure is being considered. 
\end{enumerate}
For proof, cf. \cite[III, Proposition 1.2]{MDP}.

A relative version of liaison is also available. Let $\calh_{\gamma, \rho} \subset \hilb^{d,g}(\p3)$ denote the Hilbert Scheme of curves with fixed postulation character $\gamma$ and Rao function $\rho$. If $C\in \calh_{\gamma, \rho}$ is linked to a curve $C\in \calh_{\gamma', \rho'}$ by surfaces of degree $s$ and $t$ then 
\begin{equation}\label{eq: dimform-liaison}
 \dim_C\calh_{\gamma, \rho} + h^0(\sI_C(s)) + h^0(\sI_C(t)) = \dim_{C'}\calh_{\gamma', \rho'} + h^0(\sI_{C'}(s)) + h^0(\sI_{C'}(t))
\end{equation}
cf. \cite[VII, Corollaire 3.8]{MDP} and also \cite[Theorem 7.1]{Kleppe-liaison}. An analogous dimension formula holds for linked families of curves. 

From \eqref{eq: twist1-deg+g} and \eqref{eq: twist2-deg+g}, we only need to deal with curves of degrees $4, 5, 6$, and $8$. Curves of degree $4$ were described by Nollet and Schlesinger in \cite{NS-deg4}. However, not much is known in the literature about curves of higher degrees. We start by classifying some quintic curves appearing in our setting.

{Recall that a curve $C$ is called extremal if it is not planar and $h^0(\sI_C(2)) \geq 2$. In this case, $C$ is either a twisted cubic, an elliptic quartic, or it contains a planar subcurve of degree one less than its degree. The terminology comes from the Rao function: extremal curves attain the maximum possible values for the Rao function. Analogously, subextremal curves have the maximal Rao functions among non-extremal curves. We refer to \cite[\S 9]{HS} for properties of extremal and subextremal curves.}

\begin{lemma}\label{lem: curv5quad}
Let $C$ be a non-planar curve of degree $5$ on $\p3$. If $h^0(\sI_C(2))>0$, then one of the following holds: 
\begin{enumerate}
 \item $h^0(\sI_C(2)) = 2$ and $C$ is extremal, there exists $C' \subset C$ a plane quartic;
 \item $h^0(\sI_C(2)) = 1$ and $C$ is either 
 \begin{enumerate}
 \item a divisor of type $(0, 5)$ on a smooth quadric. In particular, 
 \[
 h^1(\sI_C(t)) = \begin{cases}4, & t\in\{0, 3\}, \\ 6, & t\in \{1, 2\}, \\ 0, & {\text otherwise.} \end{cases}
 \]
 \item a divisor of type $(1, 4)$ on a smooth quadric. In particular, 
 \[
 h^1(\sI_C(t)) = \begin{cases}2, & t\in\{1, 2\}, \\ 0, & {\text otherwise.} \end{cases}
 \] 
 \item a subextremal curve. In particular, 
 \[
 h^1(\sI_C(t)) = \begin{cases}2-p_a(C), & t\in\{1, 2\}, \\ 
 \max\{1-p_a(C)+t, 0\}, & t\leq 0, \\
 \max\{4-p_a(C)-t, 0\}, & t\geq 3. 
 \end{cases}
 \]
 \item a negative genus curve on a double plane that is not subextremal. 
 \end{enumerate}
\end{enumerate}
\end{lemma}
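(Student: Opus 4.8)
The plan is to exploit $h^0(\sI_C(2))>0$ to realize $C$ as a divisor on a quadric $Q\subset\p3$, and then to stratify first by the value of $h^0(\sI_C(2))$ and, in the case $h^0(\sI_C(2))=1$, by the rank of the (then unique) quadric $Q$. Throughout, the Rao function $t\mapsto h^1(\sI_C(t))$ is computed from the standard exact sequence $0\to\sI_Q(t)\to\sI_C(t)\to\sI_{C/Q}(t)\to 0$, using that $Q$ is arithmetically Cohen--Macaulay so that the terms $h^i(\sI_Q(t))$ are known.

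If $h^0(\sI_C(2))\ge 2$, then $C$ is non-planar and lies on a pencil of quadrics, so it is extremal by definition, and the structure theorem recalled just before the statement (see \cite[\S 9]{HS}) forces a non-planar extremal quintic to contain a plane quartic $C'$. To see that $h^0(\sI_C(2))=2$ exactly, write $C=C'\cup L$ with $C'\subset H$ a plane quartic and $L$ a line meeting $C'$; any quadric through $C$ meets $H$ in a quartic, hence contains $H$, so it equals $H\cup H'$ for a plane $H'\supseteq L$, and such planes form a pencil. This yields alternative (1).

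Suppose now $h^0(\sI_C(2))=1$ and let $Q$ be the unique quadric through $C$. If $\rk Q=4$, then $C$ is a divisor of type $(a,b)$ with $a+b=5$ and $0\le a\le b$ on $Q\cong\p1\times\p1$; since $b\ge 3$ no second quadric can contain $C$, and the only types are $(0,5)$, $(1,4)$, $(2,3)$. Using $\sI_{C/Q}(t)\cong\calo_Q(t-a,t-b)$ and the K\"unneth formula on $\p1\times\p1$, the sequence above gives the Rao tables of (2a) and (2b) for $(0,5)$ and $(1,4)$, whereas $(2,3)$ satisfies $|a-b|\le 1$, is arithmetically Cohen--Macaulay, and thus has vanishing Rao function. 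If instead $\rk Q\le 3$ -- a quadric cone, a pair of distinct planes, or a double plane -- I would describe $C$ explicitly: a Weil divisor of odd class $5\ell$ through the vertex of the cone, a union $C_1\cup C_2$ of plane curves of degrees $2$ and $3$ (a degree-$4$ planar component being excluded, as it would make $C$ extremal), or a multiplicity structure supported on a plane. Comparing the resulting Rao functions with the maximal non-extremal value and invoking the classification in \cite[\S 9]{HS} identifies the subextremal curves and their Rao function (case 2c) and isolates the negative-genus multiplicity structures on a double plane that are not subextremal (case 2d).

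The main obstacle is the analysis on singular quadrics, and above all on the double plane, where the pure one-dimensional subschemes of degree $5$ are multiplicity (ribbon) structures on plane curves rather than honest Cartier divisors; classifying these and computing their cohomology, while keeping careful track of which attain the subextremal Rao function, is the delicate part. A secondary bookkeeping issue is to confirm in each configuration that no extra quadric contains $C$ (so that we remain in the case $h^0(\sI_C(2))=1$), which is again read off from the sequence relating $\sI_C$ on $\p3$ to $\sI_{C/Q}$ on $Q$, supplemented by liaison, cf. \cite[III]{MDP}, in the most degenerate cases.
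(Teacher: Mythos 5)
Your stratification---first by $h^0(\sI_C(2))$, then by the rank of the unique quadric, with the Rao function computed from $0 \to \sI_Q(t) \to \sI_C(t) \to \sI_{C/Q}(t) \to 0$---is exactly the route the paper takes, and your smooth-quadric analysis (bidegrees $(0,5)$, $(1,4)$, $(2,3)$, with K\"unneth in place of the paper's linkage to unions of lines) and your extremal case are sound; your direct count showing an extremal quintic lies on exactly a pencil of quadrics is even a small addition, since the paper takes this from \cite[\S 9]{HS}. But you have left the decisive sub-case unproved: on the double plane $Q = V(h^2)$ you only record that classifying the multiplicity structures ``is the delicate part,'' and this is precisely where cases (2c) and (2d) of the statement are decided. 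The paper settles it in a few lines using \cite[Proposition 2.1]{HS}: writing $0 \to \sI_Y(-1) \xrightarrow{h} \sI_C \to \sI_{Z/H}(-p) \to 0$, with $P$ the maximal one-dimensional subscheme of $C\cap H$, $Y$ the residual scheme, and $p = \deg P$, one has the chain $Z \subset Y \subset P \subset H$ with $\deg Y = 5-p$ and $p_a(C) = 2 - h^0(\OO_Z)$; non-extremality forces $p \le 3$, while $Y \subset P$ forces $5-p \le p$, hence $p = 3$, and $C$ is subextremal exactly when $Z$ lies on a line---automatic when $p_a(C) \ge 0$, the failure in negative genus producing case (2d). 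Without this (or an equivalent argument) your plan yields neither the dichotomy (2c)/(2d) nor the genus bookkeeping behind the Rao function in (2c).

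Two smaller points. In the cone case, your ``Weil divisor of class $5\ell$'' description needs to be completed to the paper's actual argument: $\Pic(Q) \simeq 2\Z \subset Cl(Q) \simeq \Z$, so even-degree curves on the cone are complete intersections, and a degree-$5$ curve is linked to a ruling through the vertex by $Q$ and a cubic, hence is ACM of genus $2$; this is what places it (like your $(2,3)$ divisor on the smooth quadric, which you should also explicitly match to a case of the statement) under (2c) with vanishing Rao function. In the two-plane case, $C$ need not split as a union of plane curves of degrees $2$ and $3$: it can carry a multiple or embedded structure along $L = H_1 \cap H_2$ (double lines lie on $H_1\cup H_2$ without lying on either plane's curve decomposition you propose). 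The paper instead runs the residual sequence $0 \to \sI_Y(-1) \to \sI_C \to \sI_{Z/H_1}(-P) \to 0$ with $Z$ supported on $L$, concludes $\deg P = 3$ from non-extremality and non-planarity, and cites \cite[Proposition 9.9]{HS} for subextremality together with $p_a(C) = 2 - h^0(\OO_Z)$; your closing appeal to liaison ``in the most degenerate cases'' gestures at this but does not supply it.
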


\begin{proof}
Assume that $r = h^0(\sI_C(2)) \geq 2$, i.e., $C$ is extremal. By the previous paragraph, $C$ must contain a planar quartic subcurve since $\deg(C) = 5$. Now assume that $h^0(\sI_C(2))= 1$ and let $Q$ be the quadric containing $C$.

If $Q$ is smooth, then $C$ is a divisor of bidegree $(a, 5-a)$ on $Q\simeq \p1 \times \p1$, and $0\leq a\leq 2$. It follows that either: 
\begin{itemize}
 \item $a = 0$ and $C$ is linearly equivalent on $Q$ to a disjoint union of $5$ lines. In particular, one may compute: $h^1(\sI_C) = 4$, $h^1(\sI_C(1)) = 6$, $h^1(\sI_C(2)) = 6$, $h^1(\sI_C(3)) = 4$, and $h^1(\sI_C(l)) = 0$ for $l<0$ or $l>3$.
 \item $a=1$ and $C$ can be linked to a disjoint union of $3$ lines by the complete intersection of $Q$ and a quartic surface containing $C$. Hence, one may compute $h^1(\sI_C(1)) = h^1(\sI_C(2)) = 2$ and $h^1(\sI_C(l))= 0$ for $l \not \in \{1, 2\}$.
 \item $a=2$ and $C$ is linked to a line by the intersection of $Q$ with a cubic surface. Hence, $C$ is ACM of genus $2$, which is subextremal.
\end{itemize}

If $Q$ is a quadratic cone, then $Q\simeq \mathbb{P}_{(1, 1, 2)}$ embedded by $\OO_{\mathbb{P}_{(1, 1, 2)}}(2)$, {see \cite[p.128]{Harris}}. The weighted degree map yields $\Pic(Q) \simeq 2\Z \subset Cl(Q)\simeq \Z$, cf. \cite[\S4]{CLS-toric}. Then, any Weil divisor on $Q$ of even degree is Cartier. Moreover, any Cartier divisor of $Q$ comes from the embedding into $\p3$. This is to say that any curve in $Q$ of even degree is a complete intersection. Furthermore, any odd-degree curve in $Q$ is linked to a general line through the cone's vertex, so it is ACM. In our case, $C$ has degree $5$, so it is an ACM curve of genus $2$. 

If $Q = H_1 \cup H_2$ is the union of two distinct planes, then taking $C\cap H_1$ yields
\[
0 \longrightarrow \sI_Y(-1) \overset{h_1}{\longrightarrow} \sI_C \longrightarrow \sI_{Z/H_1}(-P) \longrightarrow 0 
\]
where $P$ is the one-dimensional component of $C\cap H_1$, and $Z$ is a zero-dimensional scheme whose support is contained in $L \coloneqq H_1 \cap H_2$. Also, the residual curve $Y$ is contained in $H_2$. Note that $\deg(C) = \deg(Y) + \deg(P)$, and, up to exchanging $H_1$ and $H_2$, we may assume that $\deg(P) > \deg(Y)$. Since $C$ is neither planar nor extremal, $\deg (P) = 3$ and $C$ is a subextremal curve, cf. \cite[Proposition 9.9]{HS}. Moreover, $p_a(C) = 2 - h^0(\OO_Z)$.

If $Q = V(h^2)$ is a double plane, then let $H= V(h)$, let $P$ be the maximal one-dimensional subscheme of $C\cap H$, and let $Y$ be the residual scheme to $C\cap H$ in $C$. Thus,
\[
0 \longrightarrow \sI_Y(-1) \overset{h}{\longrightarrow} \sI_C \longrightarrow \sI_{Z/H}(-p) \longrightarrow 0 
\]
where $Z$ is a $0$-dimensional subscheme of $H$ and $p = \deg P$. Due to \cite[Proposition 2.1]{HS}, $Z\subset Y\subset P \subset H$ and $\deg Y = 5-p$ and $p_a(C)=2-h^0(\OO _Z)$. Since $C$ is not extremal, $p \leq 3$. On the other hand, $5-p \leq p$ because $Y\subset P$, hence $p=3$. If $Z$ is contained in a line, then $C$ is subextremal; this is always the case when $p_a(C)\geq 0$.

\end{proof}

For curves of degree $6$, we have the following result. 

\begin{lemma}\label{lem: curv6cub}
 Let $C$ be a degree-$6$ curve, and let $g$ denote its arithmetic genus. If $h^0(\sI_C(2))=0$ and $h^0(\sI_C(3))\geq 2$ then: 
 \begin{enumerate}
 \item $C$ is obtained by direct link from a curve of degree $3$ and genus $g-3$.
 \item $C$ is given by 
 \[
 0 \longrightarrow \sI_Y(-1) \overset{h}{\longrightarrow} \sI_C \longrightarrow \sI_{Z/H}(-d) \longrightarrow 0 
 \]
 where $H$ is a hyperplane, $Z\subset H$ has dimension $0$, and either \begin{enumerate}
 \item $d=2$, $Y$ is a complete intersection. In addition, $h^0(\sI_C(3)) = 2$ and $p_a(C) = 4-h^0(\OO_Z) \leq 1$.
 \item $d=3$, $Y$ is a (possibly degenerate) twisted cubic. Then, $h^0(\sI_C(3)) = 3$ and $p_a(C) = 3- h^0(\OO_Z)\leq 2$.
 \item $d=4$, $Y$ is of degree $2$ and genus $k<0$. Moreover, $p_a(C) = 4 + k -h^0(\OO_Z)$, and $h^0(\sI_C(3)) = 4$ if $k=-1$, and $h^0(\sI_C(3)) = 3$ if $k\leq-2$.
 \end{enumerate}
 \item $C$ has a subcurve of degree $5$ lying on a quadric. Moreover, $h^0(\sI_C(3)) = 2$.
 \end{enumerate}
\end{lemma}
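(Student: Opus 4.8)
The plan is to organize the whole proof around the \emph{fixed divisorial part} $W$ of the linear system $|\sI_C(3)|$, which is nonempty of projective dimension at least $1$ by hypothesis. Writing $w=\deg W$, one immediately discards $w=3$ (it would force $h^0(\sI_C(3))=1$), leaving $w\in\{0,1,2\}$, and I claim these three cases produce exactly conclusions (1), (2), and (3). The recurring tool is the hyperplane-decomposition sequence already used in Lemma \ref{lem: curv5quad}: for a plane $H=V(h)$ with maximal planar subcurve $P\subset H$ of degree $d$ and residual curve $Y$, one has $0\to \sI_Y(-1)\xrightarrow{h}\sI_C\to\sI_{Z/H}(-d)\to 0$ with $Z\subset H$ of dimension $0$. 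Throughout, the hypothesis $h^0(\sI_C(2))=0$ is what prevents the residual pieces from being planar, and so ties the three cases together.

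I would dispatch the two ``linkage-type'' cases first. If $w=0$, a general pencil inside $|\sI_C(3)|$ has no fixed component, so its two generators $S_1,S_2$ share no common surface component (a shared component would be forced into every member of the pencil, hence fixed). Then $X=S_1\cap S_2$ is a complete intersection curve of degree $9$ containing the curve $C$, and linking $C$ by $X$ yields a residual $\Gamma$ with $\deg\Gamma=9-6=3$ and, by \ref{P1}, $p_a(C)-p_a(\Gamma)=3\cdot\tfrac12(3+3-4)=3$, i.e. $p_a(\Gamma)=g-3$; this is (1). If instead $w=2$, the fixed quadric $Q$ means every cubic through $C$ has the form $Q\cup H$ for a plane $H$; writing $Y'$ for the part of $C$ off $Q$, cubics through $C$ correspond to planes containing $Y'$, so $h^0(\sI_{Y'}(1))=h^0(\sI_C(3))\ge 2$. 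Hence $Y'$ lies on a pencil of planes, so it is contained scheme-theoretically in their base line $L\cong\p1$; since a one-dimensional closed subscheme of the reduced line $L$ can only be $L$ itself, $\deg Y'=1$ and $Y'=L$. Thus the part of $C$ on $Q$ has degree $5$ and $h^0(\sI_C(3))=h^0(\sI_L(1))=2$, which is (3).

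The case $w=1$ is where the real content sits. The fixed plane $H$ gives the decomposition sequence above, every cubic through $C$ is $H\cup Q$, and the residual $Y$ lies on $Q$ for each such $Q$; therefore cubics through $C$ correspond bijectively to quadrics through $Y$, yielding the key identity $h^0(\sI_C(3))=h^0(\sI_Y(2))\ge2$. In particular $Y$ lies on a pencil of quadrics, so $\deg Y\le4$ and $d=6-\deg Y\ge2$. Now $h^0(\sI_C(2))=0$ forces $Y$ to be nonplanar: if $Y\subset H'$ then $C=P\cup Y\subset H\cup H'$ would sit on a quadric. Nonplanarity gives $\deg Y\ge2$, hence $d\le4$, and identifies $Y$ by its degree — a nonplanar complete-intersection quartic for $d=2$, a (possibly degenerate) twisted cubic for $d=3$, and a nonplanar degree-$2$ curve (necessarily of genus $k<0$) for $d=4$. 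The asserted values of $h^0(\sI_C(3))$ then read off from $h^0(\sI_Y(2))$, and the genera follow by comparing Hilbert polynomials in the decomposition sequence, using $p_a(P)=\binom{d-1}{2}$, giving $p_a(C)=4-h^0(\OO_Z)$, $3-h^0(\OO_Z)$, and $4+k-h^0(\OO_Z)$ respectively, together with the stated bounds; this produces (2a)--(2c).

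The hard part will be the bookkeeping in the case $w=1$: one must match the projective dimension of the moving quadric system with the geometry of its base locus to guarantee that $Y$ \emph{is} that base locus, and then enumerate the degenerate curves arising as $Y$ — in particular the non-reduced degree-$2$ structures of genus $\le-2$ appearing in (2c) and the splitting $h^0(\sI_C(3))=4$ versus $3$ according to whether $k=-1$ or $k\le-2$. Extracting the precise genus formulas and their bounds from the sequence, and checking the low-degree exclusions ($d\le1$ being impossible since $\deg Y\le4$), is routine but delicate. By contrast, the conceptual backbone — the trichotomy in $w$ and the repeated use of $h^0(\sI_C(2))=0$ to force non-planarity of the residuals — is what makes the classification clean.
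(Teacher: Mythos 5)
Your trichotomy on the degree $w$ of the fixed divisorial part of $|\sI_C(3)|$ is exactly the paper's case division by the degree of $\gcd(f_1,f_2)$ for two general cubics $f_1,f_2$ through $C$, and your treatment of $w=0$ (linkage by the degree-$9$ complete intersection, giving a residual of degree $3$ and genus $g-3$ via \ref{P1}) and of $w=2$ (the residual to $C\cap Q$ lies on a pencil of planes, hence is the base line, so the subcurve on $Q$ has degree $5$ and $h^0(\sI_C(3))=2$) coincides with the paper's argument. So the skeleton is right, and two of the three cases are complete.

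The gap sits in the case $w=1$, precisely at the steps you defer as ``routine but delicate'': they are the actual content of item (2), and one of them, as you state it, is false. The residual $Y$ is \emph{not} ``identified by its degree.'' For $d=3$, a nonplanar degree-$3$ curve need not be a (degenerate) twisted cubic: curves of genus $-1$ exist, e.g.\ a disjoint union of a conic and a line. What excludes them is the coprimality of the moving pencil $q_1,q_2$: the paper links $Y$ to a line by the complete intersection $V(q_1,q_2)$, which forces $Y$ to be ACM of degree $3$ and genus $0$, i.e.\ a possibly degenerate twisted cubic. (Concretely, for the disjoint conic-plus-line one computes that all quadrics through $Y$ share a plane, so such $Y$ is incompatible with $w=1$; your framework contains the needed hypothesis, but the inference from degree alone does not go through.) Similarly, the genus bounds in (2a) and (2b) do not follow from the decomposition sequence by bookkeeping: $p_a(C)=4-h^0(\OO_Z)\le 1$ in (2a) amounts to $h^0(\OO_Z)\ge 3$, which the paper extracts from $h^0(\sI_{Z/H}(1))\le h^1(\sI_Y(2))=0$ (so $Z$ lies on no line of $H$), and $h^0(\sI_{Z/H})=0$ gives $h^0(\OO_Z)\ge 1$ in (2b). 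Finally, the split $h^0(\sI_C(3))=4$ for $k=-1$ versus $3$ for $k\le-2$ in (2c) requires Nollet's normal form $I_Y=(x^2,xy,y^2,xf-yg)$ with $\deg f=\deg g=-k$ to compute $h^0(\sI_Y(2))$. None of these is difficult, but until they are supplied the classification in case (2) — in particular the $d=3$ identification — is not proved.
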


\begin{proof}
Let $f_1, f_2 \in H^0(\sI_C(3))$ be general elements. If $\gcd(f_1, f_2) = 1$ then $V(f_1, f_2)$ link $C$ to a curve $Y$ of degree $3$ and {genus $g-3$}.

Now assume $\gcd(f_1, f_2) = f$ has degree $1$, i.e., $f$ divides every element of $H^0(\sI_C(3))$. Taking the intersection $C\cap H$, we get a curve of degree $d\leq 5$ and a zero-dimensional scheme $Z$, satisfying
\[
0 \longrightarrow \sI_Y(-1) \overset{h}{\longrightarrow} \sI_C \longrightarrow \sI_{Z/H}(-d) \longrightarrow 0, 
\]
where $Y$ is the residual curve. Note that $H^0(\sI_Y(2))\cdot h = H^0(\sI_C(3))$ thus $h^0(\sI_Y(2)) \geq 2$ and $Y$ must be an extremal curve of degree $6-d$. Note that $f_1 = hq_1$ and $f_2 = hq_2$ where $q_1, q_2\in H^0(\sI_Y(2))$ are coprime. Then $6-d \leq 4$, i.e., $d\geq 2$. On the other hand, $d\leq 4$ since $d=5$ would imply $h^0(\sI_C(2))>0$. 

For $d=2$ we have that $Y$ is a complete intersection; in particular $h^0(\sI_C(3))= 2$. From the sequence above, we get that $h^0(\sI_{Z/H}(1)) \leq h^1(\sI_Y(2)) = 0$, hence $h^0(\OO_Z) \geq 3$. Taking Euler characteristics of the same sequence, we obtain 
\[
p_a(C) = \chi(\sI_C) = \chi(\sI_Y(-1)) + \chi(\sI_{Z/H}(-2)) = 4 - h^0(\OO_Z) \leq 1
\]

For $d=3$ then $V(q_1, q_2)$ link $Y$ to a line, hence $Y$ is a (possibly degenerate) twisted cubic; hence $h^0(\sI_C(3))= 3$. From the exact sequence above, $h^0(\sI_{Z/H})=0$, then $h^0(\OO_Z)\geq 1$, and computing the genus we get $p_a(C) = 3-h^0(\OO_Z) \leq 2$

If $d=4$, then $Y$ has degree $2$, but $h^0(\sI_Y(1)) = h^0(\sI_C(2)) = 0$. Hence, $Y$ has genus $k<0$, and it is given, up to a linear change of coordinates, by an ideal of the form $I_Y = (x^2, xy, y^2, xf-yg)$, where $\deg f = \deg g = -k$, cf. \cite{N-deg3}. Then $h^0(\sI_C(3))=h^0(\sI_Y(2)) = 3$ for $k\leq -2$ and $h^0(\sI_C(3))= 4$ for $k=-1$. Moreover, the genus of $C$ is $p_a(C) = 4+k-h^0(\OO_Z) \leq 4+k$.

Now assume $\gcd(f_1, f_2) = q$ of degree $2$, i.e., $q$ divides every element of $H^0(\sI_C(3))$. Then, taking the intersection $C\cap Q$, we may argue as in the previous case. We have an exact sequence:
\[
0 \longrightarrow \sI_Y(-2) \overset{q}{\longrightarrow} \sI_C \longrightarrow \sI_{Z/Q}(-C') \longrightarrow 0, 
\]
where $C'$ is the one-dimensional part of $C\cap Q$ and $Y$ is the residual curve. Note that $h^0(\sI_Y(1)) = h^0(\sI_C(3)) \geq 2$. Since $h^0(\sI_C(2))=0$, we have that $Y$ is a line and $C'$ is a degree $5$ curve on a quadric surface. Moreover, $h^0(\sI_C(3)) = 2$.
\end{proof}

Next, we use our knowledge of curves to provide upper bounds for the global sections of the corresponding sheaves.

\begin{lemma}\label{lem: h0F1>0}
Let $F$ be a stable rank-$2$ reflexive sheaf with $c_2(F)=4$ such that $h^0(F(1))>0$.
\begin{itemize}
\item[(i)] If $c_1(F)=0$, then $h^0(F(1))\leq3$ and equality holds only when $-3$ figures in the spectrum; hence $c_3(F) \geq 12$.
\item[(ii)] If $c_1(F)=-1$, then $h^0(F(1))\le2$, and equality only occurs when $c_3(F)=16$. 
\end{itemize}
\end{lemma}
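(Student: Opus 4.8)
The plan is to realize $F$ through Serre's correspondence with $k=1$. Since $h^0(F(1))>0$ and stability forces $h^0(F(m))=0$ for all $m\le 0$ (a nonzero section of $F(m)$ would give a destabilizing subsheaf $\op3(-m)\hookrightarrow F$), no nonzero section of $F(1)$ is divisible by a form of positive degree; hence a nonzero $\sigma\in H^0(F(1))$ vanishes in codimension at least two and produces, as in \eqref{eq: twist}, an exact sequence
\[
0\longrightarrow \op3\longrightarrow F(1)\longrightarrow \sI_C(2+c_1)\longrightarrow 0,
\]
where $C$ is a curve of degree $c_1+5$ satisfying \eqref{eq: twist1-deg+g}. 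Taking cohomology and using $H^1(\op3)=0$ gives the basic identity $h^0(F(1))=1+h^0(\sI_C(2+c_1))$. Thus both bounds reduce to estimating the number of surfaces of degree $2+c_1$ through $C$, and the equality statements reduce to identifying $C$ precisely.

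For part (ii), $C$ has degree $4$ and $h^0(F(1))=1+h^0(\sI_C(1))$. A curve cannot lie in two distinct planes, since their scheme-theoretic intersection is a reduced line and no pure one-dimensional scheme of degree $4$ is contained in it; hence $h^0(\sI_C(1))\le 1$ and $h^0(F(1))\le 2$. If equality holds, then $C$ lies in a plane $H$, and being a pure one-dimensional Cohen--Macaulay subscheme of $H\cong\p2$ it is an effective divisor of degree $4$, so $p_a(C)=\binom{3}{2}=3$. Substituting into \eqref{eq: twist1-deg+g} yields $c_3=2p_a(C)+10=16$, as claimed.

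For part (i), $C$ has degree $5$ and, by stability, $h^0(\sI_C(1))=0$, so $C$ is non-planar. If $h^0(\sI_C(2))=0$ then $h^0(F(1))=1$; otherwise Lemma \ref{lem: curv5quad} gives $h^0(\sI_C(2))\le 2$. In every case $h^0(F(1))\le 3$. Suppose equality holds, i.e. $h^0(\sI_C(2))=2$; by Lemma \ref{lem: curv5quad}(1) the curve $C$ is extremal and contains a plane quartic $C'$. The key is to detect the value $-3$ in the spectrum through the cohomology of $F$. Untwisting the Serre sequence by $1$ and using $H^i(\op3(-1))=0$ for $i=2,3$ gives $h^2(F)=h^2(\sI_C(1))$, while the ideal sequence of $C$ together with $H^i(\op3(1))=0$ for $i=1,2$ gives $h^2(\sI_C(1))=h^1(\mathcal{O}_C(1))$. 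The inclusion $C'\subset C$ yields a surjection $H^1(\mathcal{O}_C(1))\twoheadrightarrow H^1(\mathcal{O}_{C'}(1))$ (the obstruction lies in $H^2$ of a sheaf supported on the one-dimensional $C$, hence vanishes), and a direct computation on the plane quartic gives $h^1(\mathcal{O}_{C'}(1))=1$; therefore $h^2(F)\ge 1$. Finally, by the spectral property (S2), valid for $p\ge-3$ when $c_1=0$, one has $h^2(F)=\sum_i h^1(\p1,\op1(k_i+1))=\#\{i: k_i=-3\}$, since $-3$ is the least integer occurring in Table \ref{tab: spec c1=0}. Hence $-3$ figures in the spectrum, and inspecting Table \ref{tab: spec c1=0} shows this forces $c_3\ge 12$.

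The routine inputs are the identity $h^0(F(1))=1+h^0(\sI_C(2+c_1))$ and the cohomology of the plane quartic, while the cited Lemma \ref{lem: curv5quad} supplies the sharp bound $h^0(\sI_C(2))\le2$. The only delicate point is the equality case of (i): translating the geometric fact that $C$ is extremal (equivalently, that it contains a plane quartic) into the arithmetic statement that $-3$ occurs in the spectrum. I expect this to be the main obstacle, and the cleanest route is the chain $h^2(F)=h^1(\mathcal{O}_C(1))\ge h^1(\mathcal{O}_{C'}(1))=1$ combined with property (S2), which converts a single nonvanishing cohomology group directly into the presence of $-3$.
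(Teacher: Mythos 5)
Your proof is correct and follows the paper's argument essentially step for step: Serre correspondence with $k=1$, the identity $h^0(F(1))=1+h^0(\sI_C(2+c_1))$, Lemma \ref{lem: curv5quad} for the bound in (i), the planarity and genus computation in (ii), and detection of $-3$ in the spectrum via $h^2(F)>0$ combined with Table \ref{tab: spec c1=0}. The only (harmless) deviation is in the equality case of (i): the paper deduces $h^2(F)>0$ by exhibiting $H$ as an unstable plane of order $3$ through the epimorphism $\sI_C\twoheadrightarrow\sI_{Z/H}(-4)$, whereas you compute directly $h^2(F)=h^2(\sI_C(1))=h^1(\mathcal{O}_C(1))\ge h^1(\mathcal{O}_{C'}(1))=1$ using the plane quartic subcurve $C'$ --- an equivalent and slightly more self-contained justification of the same step.
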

\begin{proof}
If $c_1(F)=0$, then the exact sequence in display 
\eqref{eq: twist} for $k=1$ yields $h^0(F(1))=1+h^0(\sI_C(2))$, so if $h^0(F(1))>1$, then $C$ is contained in a quadric, while stability implies that $C$ is not planar; in addition, $\deg(C)=5$ and $p_a(C)=c_3/2-4$ by \eqref{eq: twist1-deg+g}.

Suppose that $h^0(F(1))\geq 3$. Then $C$ is an extremal curve of degree $5$. By Lemma \ref{lem: curv5quad}, $h^0(F(1)) = 3$. Moreover, we have an epimorphism $\sI_C \twoheadrightarrow \sI_{Z/H}(-4)$, where $H$ is a plane and $Z$ is a $0$-dimensional subscheme of $H$. Composing this epimorphism with $F(-1) \twoheadrightarrow \sI_C$ we see that $H$ is an unstable plane of order $3$ for $F$. Therefore $h^2(F)>0$ and $-3$ occurs in the spectrum, and this only occurs when $c_3(F)\geq 12$ (see Table \ref{tab: spec c1=0}).

When $c_1(F)=-1$, then the exact sequence in display \eqref{eq: twist} for $k=1$ yields $h^0(F(1))=1+h^0(\sI_C(1))$, so $h^0(F(1))>1$ if and only if $C$ is a planar curve; note that $\deg(C)=4$. Since $h^0(\sI_C(1))>1$ if and only if $C$ is a line, we conclude that $h^0(F(1))\le2$. Moreover, the equality can only occur when $p_a(C)=(\deg(C)-2)(\deg(C)-1)/2=3$; from the rightmost formula in display \eqref{eq: twist1-deg+g} we have $3=c_3/2-5$, thus $c_3=16$.
\end{proof}


\section{Sheaves corresponding to extremal curves}\label{sec: extremal}

Let $C$ be a degree-$d$ extremal curve, i.e., $C$ is non-planar and $h^0(\sI_C(2)) \geq 2$. This section describes the reflexive sheaves that can be built from $C$ via Serre correspondence. First, we need to describe $C$ better. If $C$ is not ACM, then it has a planar subcurve $Y\subset C$ of degree $d - 1$ and a residual line $L$ fitting in the following exact sequence
\begin{equation}\label{seq: extremal}
 0 \longrightarrow \sI_L(-1) \overset{h}{\longrightarrow} \sI_C \longrightarrow \sI_{Z/H}(1-d) \longrightarrow 0, 
\end{equation}
where $H = V(h)$ is a hyperplane, and $Z\subset H\cap L$ is a zero-dimensional subscheme. Note that $ h^0(\OO_Z) \geq 1$, since $C$ is not ACM. It also follows from this sequence that $p_a(C) = \frac{1}{2}(d-2)(d-3) - h^0(\OO_Z)$.
The main result of this section is the following: 

\begin{proposition}\label{prop: extremal-ext}
Let $F$ be a normalized reflexive sheaf, $c_1 = c_1(F) \in \{0, -1\}$, such that a global section of $F(k)$ vanishes precisely at a non-ACM extremal curve $C$ for some $k \in \Z$. Let $\xi \in H^0(\omega_C(4-2k-c_1))$ be the extension class associated with $F$. If $h^0(F) = 0$ then $k=1$ and 
\[
\extd^2(F, F) \geq h^1(F(d-6-c_1))
\]
with equality if $d\leq 5+c_1$ and $\xi|_Z \neq 0$. In particular, if $\xi|_Z \neq 0$ then
\begin{equation}\label{eq: ext2-extremal}
 \extd^2(F, F) = \begin{cases}
 1, & c_1 = 0, \text{ and } d = 5 \\
 h^0(\OO_Z) -1, & c_1 = -1, \text{ and } d = 4 
\end{cases}.
\end{equation}
If $\xi|_Z = 0$ then $c_1 = -1$, $ h^0(\OO_Z) = 1$ and for $d=4$ we have
\begin{equation}\label{eq: ext2-extremal1}
 \extd^2(F, F) \leq 1.
\end{equation}
\end{proposition}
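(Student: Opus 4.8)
The plan is to use Serre's sequence \eqref{eq: twist} to transfer the computation of $\extd^2(F,F)$ to the cohomology of $F$ and of the extremal curve $C$, and then to evaluate everything through the structure sequence \eqref{seq: extremal}.

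\emph{Forcing $k=1$.} Since $F$ is stable with $h^0(F)=0$, one has $h^0(F(k))\le h^0(F)=0$ for all $k\le0$, so a section vanishing on $C$ can exist only for $k\ge1$. To exclude $k\ge2$, write \eqref{eq: twist} as $0\to\op3(-k)\to F\to\sI_C(k+c_1)\to0$ and twist by $2-k-c_1$; as $H^1(\op3(2-2k-c_1))=0$, this yields a surjection $H^0(F(2-k-c_1))\twoheadrightarrow H^0(\sI_C(2))$, and $h^0(\sI_C(2))\ge2$ because $C$ is extremal. For $c_1=0,\ k\ge2$ and for $c_1=-1,\ k\ge3$ the twist $2-k-c_1\le0$ makes this contradict stability and $h^0(F)=0$. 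The remaining case $c_1=-1,\ k=2$ is excluded by a genus count: $C$ would be a sextic, forced by \eqref{eq: twist2-deg+g} and $c_3\le16$ to have $p_a(C)=6$, whereas a non-ACM extremal sextic satisfies $p_a\le5$. Hence $k=1$, $d=c_1+5$, and $d-6-c_1=-1$.

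\emph{Reducing to the curve.} Apply $\Hom(F,-)$ to $0\to\op3(-1)\to F\to\sI_C(1+c_1)\to0$. Serre duality rewrites $\Ext^i(F,\op3(-1))$ as $H^{3-i}(F(-3))^\vee$; the Tables \ref{tab: spec c1=0} and \ref{tab: spec c1=-1} give $h^1(F(-3))=0$ (no spectrum contains $k_i\ge2$) and stability gives $h^0(F(-3))=0$, so $\Ext^2(F,F)\cong\Ext^2(F,\sI_C(1+c_1))$. Peeling off $\calo_C$ via $0\to\sI_C\to\op3\to\calo_C\to0$, using $h^1(F(-5-c_1))=0$ and the self-duality $F^\vee\cong F(-c_1)$ to identify $\Ext^1(F,\op3(1+c_1))\cong H^1(F(1))$, one obtains
\[
\Ext^2(F,F)\;\cong\;\coker\big(H^1(F(1))\longrightarrow\Ext^1(F,\calo_C(1+c_1))\big),
\]
where $\Ext^1(F,\calo_C(1+c_1))\cong H^1(C,F|_C(1))$ away from the singularities of $F$. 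In parallel, twisting \eqref{eq: twist} down by one yields $h^1(F(-1))=h^1(\sI_C(c_1))$, the quantity $h^1(F(d-6-c_1))$ in the statement.

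\emph{Evaluating through $\xi|_Z$.} From \eqref{seq: extremal} twisted by $c_1$ one computes $h^1(\sI_C(c_1))=h^0(\calo_Z)-\rk(\delta)$, where $\delta\colon H^1(\sI_{Z/H}(1-d+c_1))\to H^2(\sI_L(c_1-1))$ is the connecting map, $h^1(\sI_{Z/H}(1-d+c_1))=h^1(\p2,\sI_Z(-4))=h^0(\calo_Z)$, and $H^2(\sI_L(c_1-1))\cong H^1(\op1(c_1-1))$ has dimension $0$ if $c_1=0$ and $1$ if $c_1=-1$. This separates the two cases: for $c_1=0$ the target vanishes, giving $h^1(F(-1))=h^0(\calo_Z)$, whereas for $c_1=-1$ surjectivity of $\delta$ gives $h^1(F(-1))=h^0(\calo_Z)-1$. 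Matching this with the cokernel above realizes $h^1(F(-1))$ as a quotient of $\Ext^2(F,F)$, yielding the bound $\extd^2(F,F)\ge h^1(F(-1))$, with equality exactly when the comparison map is surjective; the hypotheses $d\le5+c_1$ and $\xi|_Z\ne0$ are what guarantee this. Inserting the Tables then gives $\extd^2(F,F)=1$ for $c_1=0,\ d=5$ (where Lemma \ref{lem: h0F1>0} forces $h^0(\calo_Z)=1$) and $\extd^2(F,F)=h^0(\calo_Z)-1$ for $c_1=-1,\ d=4$. When $\xi|_Z=0$ the section degenerates more mildly; reflexivity of $F$ then forces $c_1=-1$ and $h^0(\calo_Z)=1$, and the same connecting-map estimate gives $\extd^2(F,F)\le1$.

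\emph{Main obstacle.} The delicate step is the final matching: controlling the contribution of the singular points of $F$—equivalently the sheaf $\inext^1(F,F)$, supported over $Z$, and the resulting defect in the comparison map—and proving that $\xi|_Z\ne0$ forces maximal rank. This analysis of the extension class along the zero-dimensional scheme $Z$ is what underpins both the equality and the residual bound $\extd^2(F,F)\le1$.
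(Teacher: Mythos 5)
Your outline shadows the paper's strategy (bound $k$, push $\extd^2(F,F)$ through the extremal-curve sequence, evaluate via $\xi|_Z$), but two steps are genuinely broken. First, the exclusion of $c_1=-1$, $k=2$ by genus count fails: \eqref{eq: twist2-deg+g} gives $p_a(C)=c_3/2-2$, so $c_3\le 16$ only forces $p_a(C)\le 6$, not $p_a(C)=6$; a non-ACM extremal sextic with $h^0(\OO_Z)=2$ has $p_a=4$, i.e.\ $c_3=12$, and sails through your numerical test. The paper's argument is uniform and local to the curve: dualizing \eqref{seq: extremal} shows $\omega_C(4-2k-c_1)$ restricts to the residual line $L$ as $\OO_L(3-2k-c_1-h^0(\OO_Z))$, and since $\xi$ must have isolated zeros, $\xi|_L\not\equiv 0$ forces $3-2k-c_1-h^0(\OO_Z)\ge 0$; with $h^0(\OO_Z)\ge 1$ this kills $k\ge 2$ in every case and simultaneously yields the bounds on $h^0(\OO_Z)$ that the rest of the proof uses.

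Second, and more seriously, your reduction step contains a false identification and then defers the actual content of the proposition. Serre duality gives $\Ext^1(F,\op3(1+c_1))\cong H^2(F(-5-c_1))^*$, not $H^1(F(1))$: the local-to-global sequence carries an extra term $H^0\big(\inext^1(F,\op3)(1+c_1)\big)$ of length $c_3$, supported exactly at the singular points of $F$ --- which are the zeros of $\xi$ and hence lie on $C$, so the phrase ``away from the singularities of $F$'' is vacuous for $\Ext^1(F,\calo_C(1+c_1))$ as well. All the substance of the statement --- equality when $\xi|_Z\ne 0$, the bound \eqref{eq: ext2-extremal1}, and the claim that $\xi|_Z=0$ forces $c_1=-1$ and $h^0(\OO_Z)=1$ --- is concentrated in the vanishing $\Ext^1(F,\OO_Z)=0$ for $\xi|_Z\ne 0$ (versus $\extd^1(F,\OO_Z)=1$ otherwise). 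In the paper this is Lemma \ref{lem: vanish-extremal}, proved by building explicit free resolutions of $F$ (mapping cone plus Horseshoe Lemma applied to \eqref{seq: extremal}) in four normal-form cases for $(h^0(\OO_Z),Z)$ and restricting the presentation matrices to $Z$; the reduction itself also needs the auxiliary reflexive sheaf $E$ of diagram \eqref{eq: diag-extremal} to identify $\Ext^2(F,F)\cong\Ext^2(F,\sI_{Z/H}(2+c_1-d))$, and Remark \ref{rem: h-annihilates} (that $h$ annihilates $H^1_*(F)$) to extract $h^1(F(d-6-c_1))$. Your write-up names precisely this step ``the main obstacle'' and asserts its conclusion rather than proving it, so the proposal establishes neither \eqref{eq: ext2-extremal} nor \eqref{eq: ext2-extremal1}.
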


We start by bounding $k$. Let $F$ be a normalized reflexive sheaf and $k$ be an integer such that $C$ is the vanishing locus of a global section of $F(k)$. Then $F(k)$ corresponds to $\xi \in H^0(\omega_C(4-2k-c_1))$ with isolated zeros. Dualizing and twisting the sequence \eqref{seq: extremal}, we get {the exact sequence}
\[
0 \longrightarrow \OO_Y(d-2k-c_1) \longrightarrow \omega_C(4-2k-c_1) \longrightarrow \OO_L(3-2k-c_1- h^0(\OO_Z) ) \longrightarrow 0.
\]
Since $\xi$ has isolated zeros, $\xi|_L \not \equiv 0$ which implies $3-2k-c_1- h^0(\OO_Z) \geq 0$. On the other hand, if we suppose that $F$ is stable, then 
\[
2 \leq 2k \leq 3-c_1 - h^0(\OO_Z) .
\]
This leads to the following immediate lemma.

\begin{lemma}\label{lem: extrmal-Zbound}
Let $C$ be a non-ACM extremal curve, such that there exists $\xi \in H^0(\omega_C(4-2k-c_1))$ with isolated zeros. If $k\geq 1$ then $k=1$ and either: $c_1=0$ and $ h^0(\OO_Z) = 1$; or $c_1=-1$ and $ h^0(\OO_Z) \in \{1, 2\}$.
\end{lemma}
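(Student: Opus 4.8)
The plan is to read the conclusion off the numerical constraint $2 \le 2k \le 3 - c_1 - h^0(\OO_Z)$ already recorded in the paragraph preceding the statement, so that the real content is justifying the two bounds and then running a short case analysis on $c_1$. First I would note that the hypothesis $k \ge 1$ gives the lower bound $2k \ge 2$ for free. For the upper bound I would use the dualized and twisted form of the defining sequence \eqref{seq: extremal}, which presents $\omega_C(4-2k-c_1)$ as an extension of $\OO_L(3-2k-c_1-h^0(\OO_Z))$ by $\OO_Y(d-2k-c_1)$. The decisive observation is that, since $\xi$ has isolated zeros, its restriction $\xi|_L$ to the line $L$ cannot vanish identically; otherwise $\xi$ would vanish along the one-dimensional locus $L$, contradicting the isolated-zeros hypothesis. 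Consequently $H^0(\OO_L(3-2k-c_1-h^0(\OO_Z))) \ne 0$, which forces $3-2k-c_1-h^0(\OO_Z) \ge 0$, i.e.\ $2k \le 3 - c_1 - h^0(\OO_Z)$.

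Next I would combine these bounds with the fact that $C$ is non-ACM, which (as noted immediately after \eqref{seq: extremal}) forces $h^0(\OO_Z) \ge 1$, and then split into the two cases. When $c_1 = 0$ the chain $2 \le 2k \le 3 - h^0(\OO_Z) \le 2$ collapses, forcing $2k = 2$, hence $k = 1$ and $h^0(\OO_Z) = 1$. When $c_1 = -1$ we obtain $2 \le 2k \le 4 - h^0(\OO_Z) \le 3$; since $2k$ is even and at most $3$ it must equal $2$, so $k = 1$, and then $2 \le 4 - h^0(\OO_Z)$ yields $h^0(\OO_Z) \le 2$, i.e.\ $h^0(\OO_Z) \in \{1, 2\}$.

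I do not expect a genuine obstacle here: once the dualized sequence and the non-vanishing of $\xi|_L$ are in hand, the assertion is pure arithmetic. The only points that require care are the bookkeeping of the twist $3-2k-c_1-h^0(\OO_Z)$ in the quotient $\OO_L$ (so that the correct integer is bounded below by $0$), and the parity step in the case $c_1 = -1$, where it is the integrality of $k$ — equivalently, that $2k$ is even — rather than the inequality alone that pins down $k = 1$.
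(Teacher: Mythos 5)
Your proposal is correct and follows essentially the same route as the paper: the paper likewise dualizes and twists \eqref{seq: extremal} to present $\omega_C(4-2k-c_1)$ as an extension of $\OO_L(3-2k-c_1-h^0(\OO_Z))$ by $\OO_Y(d-2k-c_1)$, deduces $\xi|_L\not\equiv 0$ from the isolated-zeros hypothesis to obtain $3-2k-c_1-h^0(\OO_Z)\ge 0$, and calls the lemma ``immediate'' from the resulting chain $2\le 2k\le 3-c_1-h^0(\OO_Z)$ together with $h^0(\OO_Z)\ge 1$ for non-ACM $C$. Your explicit attention to the parity of $2k$ in the case $c_1=-1$ is exactly the step the paper leaves implicit, so nothing is missing.
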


A reflexive sheaf $F$ as above fits in an exact sequence
\begin{equation}\label{seq: F-extremal}
 0 \longrightarrow \op3(-1) \longrightarrow F \longrightarrow\sI_C(1+c_1) \longrightarrow 0,
\end{equation}
{computing the long exact sequence of cohomlogy} we know that $H^1_*(F) \cong M_C(1+c_1)$ as $\kappa[x_0, x_1, x_2, x_3]$-modules. On the other hand, if follows from \eqref{seq: extremal} that the Rao module of $C$ is 
\[
M_C = \kappa[x_0, x_1, x_2, x_3]/(l_1, l_2, p, q)
\]
where $L = V(l_1, l_2)$, $Z = V(l_1, l_2, p)$, and $q$ is a degree $d-1$ polynomial not vanishing on $Z$. In particular, we can compute the dimensions
\begin{equation}\label{eq: coh-F-extremal}
 h^1(F(t)) = h^1(\sI_C(t +1+c_1) = \begin{cases}
 h^0(\OO_Z) , & -1-c_1\leq t \leq d-3-c_1 \\
 h^0(\OO_Z) -1, & t= -2-c_1, d-2-c_1 \\
 0, & \text{otherwise}
\end{cases},
\end{equation}
cf. \cite[p.16]{HS}. Note that, $h^1(F(t)) = 0$ for $t\leq -2$. 

\begin{remark}\label{rem: h-annihilates}
It follows from the description above that $h$ annihilates $H^1_*(F)$. Indeed, since $Z\subset H$, we have that $h = al_1+bl_2+cp$, where $c=0$ if $ h^0(\OO_Z) > 1$.
\end{remark}

\begin{proof}[Proof of Proposition \ref{prop: extremal-ext}] We have already settled that $k=1$ if $h^0(F) = 0$. Next, we compute $\extd^2(F, F)$. Consider the following commutative diagram, {with exact rows and columns}: 
\begin{equation}\label{eq: diag-extremal}
\begin{split}
\xymatrix@-2ex{ 
& 0 \ar[d] & 0 \ar[d] & \\
& \op3(-1)\ar[d]\ar@{=}[r] & \op3(-1)\ar[d] & & \\
0\ar[r] & E \ar[r]\ar[d] & F \ar[r] \ar[d] & \sI_{Z/H}(2+c_1-d) \ar@{=}[d] \ar[r] & 0 \\
0\ar[r] & \sI_L(c_1) \ar[r]\ar[d] & \sI_C(1+c_1) \ar[r]\ar[d] & \sI_{Z/H}(2+c_1-d) \ar[r] & 0 \\
& 0 & 0 & 
}
\end{split}
\end{equation}
where $E$ is another reflexive sheaf. Applying $\Hom(F, -)$ to the middle row we get {the exact sequence}
\[
\to \Ext^2(F, E) \longrightarrow \Ext^2(F, F) \longrightarrow \Ext^2(F, \sI_{Z/H}(2+c_1-d))\longrightarrow \Ext^3(F, E) \to .
\]
On the other hand, from the leftmost column, we have that $E$ has the resolution.
\[
0 \longrightarrow \op3(c_1-2) \longrightarrow \op3(c_1-1)^{\oplus 2} \oplus \op3(-1) \longrightarrow E \longrightarrow 0.
\]
Applying $\Hom(F, -)$ and Serre duality, we get $\Ext^2(F, E) = \Ext^3(F, E) = 0$ from $h^0(F) = 0$ and $h^1(F(t)) = 0$ for $t\leq -2$. Therefore, 
\[
\Ext^2(F, F) \cong \Ext^2(F, \sI_{Z/H}(2+c_1-d)).
\]
To compute $\Ext^2(F, \sI_{Z/H}(2+c_1-d))$, we apply $\Hom(F, -)$ to the exact sequence
\[
0 \longrightarrow \sI_{Z/H}(2+c_1-d) \longrightarrow\OO_H(2+c_1-d) \longrightarrow \OO_Z \longrightarrow 0
\]
to get {the exact sequence}
\[
\to \Ext^1(F, \OO_Z) \to \Ext^2(F, \sI_{Z/H}(2+c_1-d)) \to \Ext^2(F, \OO_H(2+c_1-d))\to 0. 
\]
Note that $\Ext^2(F, \OO_Z) = 0$, since $F$ is reflexive and $\dim Z = 0$.

We claim that $\Ext^1(F, \OO_Z) = 0$, unless $\xi|_Z = 0$; the proof will be given in Lemma~\ref{lem: vanish-extremal} below. Therefore, if $\xi|_Z \neq 0$ we have:
\[
\Ext^2(F, F) \cong \Ext^2(F, \OO_H(2+c_1-d)) \cong \Ext^1(\OO_H(6+c_1-d), F)^*,
\]
where the rightmost isomorphism is Serre duality. To compute the latter we apply $\Hom( - , F)$ to the exact sequence 
\[
0 \longrightarrow \op3(5+c_1-d) \overset{h}{\longrightarrow} \op3(6+c_1-d) \longrightarrow \OO_H(6+c_1-d) \longrightarrow 0
\]
Then, we have:
\[
H^0(F(d-5-c_1)) \to \Ext^1(\OO_H(6+c_1-d), F) \to H^1(F(d-6-c_1)) \overset{h}{\to } H^1(F(d-5-c_1)).
\]
On the other hand, $h$ annihilates the module $H^1_*(F)$, cf. Remark \ref{rem: h-annihilates}. Thus, 
\[
H^0(F(d-5-c_1)) \to \Ext^1(\OO_H(6+c_1-d), F) \to H^1(F(d-6-c_1)) \to 0, 
\]
and we have that $\extd(F, F) \geq h^1(F(d-6-c_1))$ with equality when $d\leq 5+c_1$, since $h^0(F) = 0$. Finally, to get \eqref{eq: ext2-extremal} we only need to use \eqref{eq: coh-F-extremal}.

If $\xi|_Z = 0$ we have, owing to Lemma \ref{lem: vanish-extremal}, that $ h^0(\OO_Z) = -c_1 = 1$ and $\Ext^1(F, \OO_Z) =1$. Nonetheless, it follows from the computations in the preceding paragraph that 
\[
\extd^2(F,F) \geq \extd^1(\OO_H(6+c_1-d), F) \geq h^1(F(d-6-c_1)).
\]
Restricting to the case $d=4$ we have that $\Ext^2(F, \OO_H(-3)) = 0$. Hence 
\[
\extd^2(F,F) = \extd^2(F,\sI_{Z/H}(-3)) \leq \extd^1(F,\OO_Z) = 1.
\]
\end{proof}

\begin{lemma}\label{lem: vanish-extremal}
Let $F$ be a normalized reflexive sheaf such that $F(1)$ corresponds to an extremal curve $C$ as in \eqref{seq: extremal}, with extension class $\xi \in H^0(\omega_C(2-c_1))$. If $\xi|_Z = 0$ then $h^0(\OO_Z) = -c_1 = 1$ and $\extd^1(F, \OO_Z) = 1$, otherwise $\extd^1(F, \OO_Z) = 0$.
\end{lemma}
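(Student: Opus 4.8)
The plan is to compute $\extd^1(F, \OO_Z)$ directly by applying $\Hom(-, \OO_Z)$ to a resolution or presentation of $F$, and to relate the vanishing (or non-vanishing) to the restriction $\xi|_Z$. Since $Z$ is a zero-dimensional scheme supported on the line $L$ inside the hyperplane $H$, the module $\OO_Z$ is rather simple, and the key is to exploit the defining sequence \eqref{seq: extremal} together with the Serre extension \eqref{seq: F-extremal} for $F$. First I would apply $\Hom(-, \OO_Z)$ to the sequence \eqref{seq: F-extremal},
\[
0 \longrightarrow \op3(-1) \longrightarrow F \longrightarrow \sI_C(1+c_1) \longrightarrow 0,
\]
which yields an exact sequence relating $\Ext^i(\sI_C(1+c_1), \OO_Z)$, $\Ext^i(F, \OO_Z)$ and $\Ext^i(\op3(-1), \OO_Z)$. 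Because $\op3(-1)$ is locally free, $\Ext^i(\op3(-1), \OO_Z) = H^i(\OO_Z \otimes \op3(1)) = 0$ for $i>0$ and equals $H^0(\OO_Z)$ (twisted) for $i=0$; this pins down most of the terms and reduces the computation of $\extd^1(F, \OO_Z)$ to understanding $\Ext^i(\sI_C(1+c_1), \OO_Z)$ together with the connecting map, which is precisely where $\xi$ enters.

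The crucial observation is that the connecting homomorphism $\Hom(\op3(-1), \OO_Z) \to \Ext^1(\sI_C(1+c_1), \OO_Z)$ in the long exact sequence is given by cup product with the extension class $\xi \in \Ext^1(\sI_C(1+c_1), \op3(-1)) \simeq H^0(\omega_C(2-c_1))$ defining $F$. Thus the rank of this map, and hence the dimension of $\extd^1(F,\OO_Z)$, is controlled by whether $\xi$ survives restriction to $Z$. To make this precise I would use the localization/restriction of $\xi$ to $\OO_Z$: the natural evaluation sends $\xi$ to a class in $\Ext^1(\sI_C(1+c_1)|_Z, \OO_Z)$ or equivalently to $\xi|_Z \in H^0(\omega_C(2-c_1)\otimes \OO_Z)$, and the hypothesis $\xi|_Z = 0$ forces this connecting map to vanish on the relevant piece, producing a nonzero $\Ext^1(F, \OO_Z)$. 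Conversely $\xi|_Z \neq 0$ makes the map injective on the corresponding line, killing $\Ext^1(F,\OO_Z)$.

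To carry this out cleanly I would restrict the sequence \eqref{seq: extremal} to $\OO_Z$ (or to $H$) to get an explicit handle on $\sI_C|_Z$ and $\omega_C|_Z$; by Lemma \ref{lem: extrmal-Zbound} we already know the admissible values of $h^0(\OO_Z)$ and $c_1$ under the running hypotheses, so only the cases $c_1 = 0, h^0(\OO_Z)=1$ and $c_1=-1, h^0(\OO_Z)\in\{1,2\}$ need checking. In each case I would compute $\Ext^\bullet(\sI_C(1+c_1),\OO_Z)$ via the exact sequence \eqref{seq: extremal}, noting that $\Ext^i(\sI_L(-1),\OO_Z)$ and $\Ext^i(\sI_{Z/H}(1-d),\OO_Z)$ are computable since $L$, $H$ and $Z$ are complete intersections or planar. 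Comparing the $\xi|_Z=0$ and $\xi|_Z\neq 0$ situations then yields $\extd^1(F,\OO_Z)=1$ in the degenerate case (which by dimension count forces $h^0(\OO_Z)=-c_1=1$) and $\extd^1(F,\OO_Z)=0$ otherwise.

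The main obstacle I anticipate is identifying the connecting map with cup product by $\xi|_Z$ and verifying that $\xi|_Z$ is the correct obstruction, rather than some coarser invariant. Making the restriction $\xi|_Z$ rigorous requires care: $\xi$ lives in $H^0(\omega_C(2-c_1))$, and one must show that its image under $\Hom(\op3(-1),\OO_Z)\to \Ext^1(\sI_C(1+c_1),\OO_Z)$ genuinely records $\xi|_Z$ and not merely $\xi$ modulo something larger. The subtlety is that $\OO_Z$ may be non-reduced (when $h^0(\OO_Z)=2$), so the evaluation $\xi \mapsto \xi|_Z$ must account for the scheme structure of $Z$ and the possibility that $\xi$ vanishes on the reduction of $Z$ but not on $Z$ itself. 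I expect functoriality of the connecting maps together with the explicit description of $Z\subset L\cap H$ to resolve this, but it is the step where the bookkeeping is most delicate.
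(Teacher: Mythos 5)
Your reduction is correct as far as it goes: applying $\Hom(-,\OO_Z)$ to \eqref{seq: F-extremal} and using $\Ext^{i}(\op3(-1),\OO_Z)=0$ for $i>0$ does give $\Ext^1(F,\OO_Z)\cong\coker\big(\delta\colon\Hom(\op3(-1),\OO_Z)\to\Ext^1(\sI_C(1+c_1),\OO_Z)\big)$, and $\delta$ is indeed Yoneda composition with the class $\xi$. This is a tidier framing than the paper's, which instead builds an explicit free resolution of $F$ itself (Horseshoe Lemma applied to \eqref{seq: res-C-extremal} and \eqref{seq: F-extremal}) and restricts the presentation matrices to $Z$ in four coordinate cases. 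But the decisive step of your argument is missing, and it is exactly the step you flag at the end. By functoriality you only get the easy implication: $\xi|_Z=0$ forces $\delta=0$, since the map $\Ext^1(\sI_C(1+c_1),\op3(-1))\to\Ext^1(\sI_C(1+c_1),\OO_Z)$ factors through $H^0(\omega_C(2-c_1)\otimes\OO_Z)$. The converse — that $\xi|_Z\neq0$ makes the image $\bar\xi$ \emph{generate} the $\OO_Z$-module $\Ext^1(\sI_C(1+c_1),\OO_Z)$, so that $\delta$ is surjective — is not formal: the natural map $\omega_C(2-c_1)\otimes\OO_Z\to\inext^1(\sI_C(1+c_1),\OO_Z)$ is in general neither injective nor an isomorphism ($Z$ sits at singular or embedded points of $C$, where $\omega_C\otimes\OO_Z$ can be strictly bigger than the target, which is $1$-dimensional in the basic case). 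A priori $\xi|_Z$ could be nonzero yet die under this map, which would produce $\extd^1(F,\OO_Z)\neq0$ and contradict the statement; ruling this out requires the explicit local presentation of $\sI_C$ near $Z$ — the matrices $N$ and the row $(f',g')$ encoding $\xi$ in the paper — where one sees that the composite records precisely the value $g'|_Z$, i.e.\ genuinely detects $\xi|_Z$. You would also still need $\extd^1(\sI_C(1+c_1),\OO_Z)=1$ in the degenerate case, again a local computation.

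The second gap is your claim that $\xi|_Z=0$ forces $h^0(\OO_Z)=-c_1=1$ ``by dimension count.'' No count of Ext dimensions gives this; in the paper it comes from \emph{reflexivity} read off the presentation. For $c_1=0$ the relevant component $g'$ of the extension data is a scalar, and $g'=0$ would put $L$ inside the support of $\inext^1(F,\op3)$, contradicting reflexivity — so $\xi|_Z\neq0$ automatically. For $c_1=-1$ and $h^0(\OO_Z)=2$ (both the reduced and the non-reduced $Z$), the restricted presentation matrices contain unit entries at every point of $Z$, forcing $\extd^1(F,\OO_Z)=0$ unconditionally, so the degenerate alternative simply cannot occur there. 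These verifications live in the case analysis ($L\subset H$ or not; $Z$ reduced or not) that your outline defers to ``computable'' Ext groups of $\sI_L$ and $\sI_{Z/H}$ but never carries out; once you do carry it out, you have essentially reproduced the paper's matrix computation, with the long exact sequence serving only as packaging. So the skeleton is sound, but as written the proof of both nontrivial assertions of the lemma is absent.
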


\begin{proof}
The strategy is to compute the Ext groups explicitly from a free resolution for $F$, which is tied to the description of $C$. Up to a linear change of coordinates, we may assume $L= V(x_0, x_1)$, and we have four cases: 
\begin{enumerate}
 \item If $ h^0(\OO_Z) = 1$ and $L\not \subset H$, then we can choose $h = x_2$ and $Z = V(x_0, x_1, x_2)$;
 \item If $ h^0(\OO_Z) = 1$ and $L\subset H$, then we can choose $h = x_0$ and $Z = V(x_0, x_1, x_2)$;
 \item If $ h^0(\OO_Z) = 2$ and $Z$ is reduced, then we can choose $h = x_0$ and $Z = V(x_0, x_1, x_2x_3)$;
 \item If $ h^0(\OO_Z) = 2$ and $Z$ is not reduced, then we can choose $h = x_0$ and $Z = V(x_0, x_1, x_2^2)$.
\end{enumerate}

In the first case, $\sI_{Z/H}$ has the following resolution:
\begin{equation}\label{eq: res-Z-i}
 0 \to \op3(-3) \xrightarrow{\footnotesize \begin{bmatrix}
 x_0 \\ x_1 \\ x_2
 \end{bmatrix} } \op3(-2)^{\oplus 3} \xrightarrow{\footnotesize \begin{bmatrix}
 x_2 & 0 & -x_0 \\ 0 & x_2 & -x_1
 \end{bmatrix} } \op3(-1)^{\oplus 2} \longrightarrow \sI_{Z/H} \to 0 .
\end{equation}
This is the mapping cone associated with $\cdot x_2\colon \sI_Z(-1) \to \sI_Z$ and the Koszul resolution of $\sI_Z$. {For the construction of the mapping cone we refer to \cite[\S1.5]{Wei}}. With \eqref{eq: res-Z-i} and the Koszul resolution of $\sI_L$ in hand, we apply the Horseshoe Lemma to \eqref{seq: extremal} to obtain, up to vector bundle automorphisms, the following resolution of $\sI_C$: 
\begin{equation}\label{seq: res-C-extremal}
 0 \to \op3(-2-d) \xrightarrow{\footnotesize \begin{bmatrix}
 x_0 \\ x_1 \\ x_2 \\ x_3^{d-1}
 \end{bmatrix}} 
 \begin{matrix}
 \op3(-1-d)^{\oplus 3} \\\oplus \\ \op3(-3)
 \end{matrix}
 \overset{N}{\longrightarrow}
 \begin{matrix}
 \op3(-d)^{\oplus 2} \\ \oplus \\ \op3(-2)^{\oplus 2}
 \end{matrix}
 \longrightarrow \sI_C \to 0
\end{equation}
where, for some $f, g \in H^0(\op3(d-2))$, 
\[
N = \begin{bmatrix}
 x_2 & 0 & -x_0 & 0 \\ 
 0 & x_2 & -x_1 & 0 \\ 
 f\, x_1 -x_3^{d-1} & -f\, x_0 & 0 & x_0 \\
 g \, x_1 & -g\, x_0 - x_3^{d-1} & 0 & x_1
 \end{bmatrix}.
\]
Note that $(f, g)$ determines the extension class defining \eqref{seq: extremal}. Applying the Horseshoe Lemma to \eqref{seq: F-extremal} yields 
\begin{equation}\label{seq: res-F-extremal1}
0 \to \op3(c_1-1-d) \xrightarrow{\footnotesize \begin{bmatrix}
 x_0 \\ x_1 \\ x_2 \\ x_3^{d-1}
 \end{bmatrix}} 
 \begin{matrix}
 \op3(c_1-d)^{\oplus 3} \\\oplus \\ \op3(c_1-2)
 \end{matrix}
 \overset{A}{\longrightarrow}
 \begin{matrix}
 \op3(c_1+1-d)^{\oplus 2} \\ \oplus \\ \op3(c_1-1)^{\oplus 2} \\ \oplus \\ \op3(-1)
 \end{matrix}
 \to F\to 0
\end{equation}
where the matrix $A$ is the concatenation of $N$ with another row: 
\[
A = \begin{bmatrix}
 x_2 & 0 & -x_0 & 0 \\ 
 0 & x_2 & -x_1 & 0 \\ 
 f\, x_1 -x_3^{d-1} & -f\, x_0 & 0 & x_0 \\
 g \, x_1 & -g\, x_0 - x_3^{d-1} & 0 & x_1 \\
 f'x_1 & -f'x_0 & g'x_3^{d-1} & -g'x_2
 \end{bmatrix}, 
\]
for some $f'\in H^0(\op3(d-2-c_1)$ and $g' \in H^0(\op3(-c_1))$. We remark that the extension class $\xi \in \Ext^1(\sI_C(1+c_1),\op3(-1))$ defining $F$ is precisely the class of the bottom row of $A$, modulo the module generated by the rows of $N$. This can be seen directly by applying $\inhom(-, \op3(-1))$ to \eqref{seq: res-C-extremal}. Also note that $g'|_L \not \cong 0$, otherwise $L$ would be in the support of $\inext^1(F, \op3)$, hence $F$ would not be reflexive.

Next, we compute the sheaves $\inext^i(F, \OO_Z)$ by applying the functor $\inhom( - , \OO_Z)$ to \eqref{seq: res-F-extremal1}: 
\[
0 \longrightarrow \OO_Z^{\oplus 5} \xrightarrow{A^T|_Z} \OO_Z^{\oplus 4} \xrightarrow{\footnotesize \begin{bmatrix}
 0 & 0 & 0 & 1
\end{bmatrix}}
\OO_Z \longrightarrow 0
\]
We note that 
\[
A^T|_Z = \begin{bmatrix}
 0 & 0 & -1 & 0 & 0 \\ 
 0 & 0 & 0 & -1 & 0 \\ 
 0 & 0 & 0 & 0 & g'(0, 0, 0, 1) \\ 
 0 & 0 & 0 & 0 & 0 
 \end{bmatrix}
\]
If $c_1 =0$, then $g'$ is a constant. We have already pointed out that $F$ is reflexive only if $g' \neq 0$, in which case the complex above is exact, except on the right. Then we have $\inhom(F, \OO_Z) \cong \OO_Z^{\oplus 2}$ and $\inext^j(F, \OO_Z) = 0$ for $j\geq 1$. It follows from the local-to-global spectral sequence that 
\[
\Ext^1(F, \OO_Z) = 0.
\]
For $c_1 = -1$ we have that $g'$ is a degree-$1$ polynomial. Thus, for $g'$ generic, we get the vanishing of Ext groups as before. But for {$g'$ a constant multiple of $x_2$} we get $\inext^1(F, \OO_Z) \cong \OO_Z$. Hence 
\[
\extd^1(F, \OO_Z) = h^0(\inext^1(F, \OO_Z)) = 1.
\]

In the remaining cases, we repeat this argument \emph{mutatis mutandis}. For the second, $F$ has a resolution of the form \eqref{seq: res-F-extremal1} but the matrix $A$ is now
\[
A = \begin{bmatrix}
 x_2 & 0 & -x_0 & 0 \\ 
 x_1 & -x_0 & 0 & 0 \\ 
 -x_3^{d-1} & f\, x_2 & -f\, x_1 & x_0 \\
 0 & g \, x_2 - x_3^{d-1} & -g\, x_1 & x_1 \\
 0 & f'x_2 & g'x_3^{d-1}-f'x_1 & -g'x_2
 \end{bmatrix}.
\]
Hence, the proof is precisely as in the first case.

For the third and fourth cases, $ h^0(\OO_Z) =2$ and $c_1=-1$, and we have a resolution for $F$ of the form 
\begin{equation}\label{seq: res-F-extremal2}
0 \to \op3(-3-d) \overset{B}{\longrightarrow}
 \begin{matrix}
 \op3(-2 -d)^{\oplus 2} \\\oplus\\ \op3(-1 -d) \\\oplus \\ \op3(-3)
 \end{matrix}
 \overset{A}{\longrightarrow}
 \begin{matrix}
 \op3(-1-d) \\ \oplus \\ \op3(-d) \\ \oplus \\ \op3(-2)^{\oplus 2} \\ \oplus \\ \op3(-1)
 \end{matrix}
 \to F\to 0
\end{equation}

In the third case, we have 
\[
B = \begin{bmatrix} x_0 \\ x_1 \\ x_2x_3 \\ x_2^{d} + x_3^{d} \end{bmatrix}, \, 
A = \begin{bmatrix}
 x_1 & -x_0 & 0 & 0 \\
 x_2x_3 & 0 & -x_0 & 0 \\
 x_2^{d} + x_3^{d} & f\, x_2x_3 &-f\, x_1 & -x_0 \\
 0 & x_2^{d} + x_3^{d} + g\, x_2x_3 & -g\, x_1 & -x_1 \\
 0 & f'x_2x_3 & x_2^{d} + x_3^{d}-f'x_1 & -x_2x_3
\end{bmatrix}
\]
for some $f, g\in H^0(\op3(d-2))$, and $f'\in H^0(\op3(d-1))$. We have that $Z$ is composed of two points $\{ (0: 0: 0: 1), (0: 0: 1: 0)\}$. By symmetry, we will only analyze one of them, say $Z_1 = (0: 0: 0: 1)$. It follows that 
\[
B|_{Z_1} = \begin{bmatrix} 0 \\ 0 \\ 0 \\ 1 \end{bmatrix}, \, \text{ and } \, 
A|_{Z_1} = \begin{bmatrix}
 0 & 0 & 0 & 0 \\
 0 & 0 & 0 & 0 \\
 1 & 0 & 0& 0 \\
 0 & 1& 0 & 0 \\
 0 & 0& 1 & 0
\end{bmatrix}.
\]
Thus applying $\inhom(- , \OO_Z)$ to \eqref{seq: res-F-extremal2} and computing cohomology, we get $\inhom(F, \OO_Z) \cong \OO_Z^{\oplus 2}$ and $\inext^j(F, \OO_Z) = 0$ for $j\geq 1$. Therefore, $\Ext^1(F, \OO_Z) =0$.

Finally, we address the fourth case. We have a resolution \eqref{seq: res-F-extremal2} with maps given by $B = \begin{bmatrix} x_0 & x_1 & x_2^2 & x_2x_3^{d-1} + x_3^{d} \end{bmatrix}^T$, and 
\[
A = \begin{bmatrix}
 x_1 & -x_0 & 0 & 0 \\
 x_2^2 & 0 & -x_0 & 0 \\
 x_2x_3^{d-1} + x_3^{d} & f\, x_2^2 &-f\, x_1 & -x_0 \\
 0 & x_2x_3^{d-1} + x_3^{d} + g\, x_2^2 & -g\, x_1 & -x_1 \\
 0 & f'x_2^2 & x_2x_3^{d-1} + x_3^{d}-f'x_1 & -x_2^2
\end{bmatrix}
\]
for some $f, g\in H^0(\op3(d-2))$, and $f'\in H^0(\op3(d-1))$. Then, the restriction to $Z$ gives
\[
B = \begin{bmatrix} 0 \\ 0 \\ 0 \\ \overline{x_2} + 1 \end{bmatrix}, \, 
A = \begin{bmatrix}
 01 & 0 & 0 & 0 \\
 0 & 0 & 0 & 0 \\
 \overline{x_2} + 1 & 0 &0 & 0 \\
 0 & \overline{x_2} + 1 & 0 & 0 \\
 0 & 0 & \overline{x_2} + 1 & 0
\end{bmatrix}
\]
where $\overline{x_2}$ is the class of $x_2$ modulo $x_2^2$. In particular, $\overline{x_2} + 1$ is a unity in $\OO_Z$. Therefore, as in the previous case, we have $\Ext^1(F, \OO_Z) =0$.
\end{proof}


\section{A class of unobstructed reflexive sheaves}\label{sec: unobsheaves}

Given a reflexive sheaf $F$ on $\p3$ (of arbitrary rank), one can find a locally free sheaf $L=\bigoplus_i\op3(a_i)$ and an epimorphism $L\twoheadrightarrow F$; the kernel of this epimorphism, call it $E$, is necessarily locally free by the Auslander–Buchsbaum formula, or one can use the exact sequence
\begin{equation}\label{eq: ELF}
0 \longrightarrow E \overset{\varphi}{\longrightarrow} L \longrightarrow F \longrightarrow 0
\end{equation}
to show that $\inext^p(E, \op3)=0$ for $p>0$. 

In this section, we will present some sufficient conditions that allow us to compute $\extd^1(F, F)$ and guarantee that $F$ is unobstructed as a point in the corresponding moduli space of reflexive sheaves (that is, $\extd^2(F, F)=0$).

Given two locally free sheaves $E$ and $L$ such that $\rk(E)<\rk(L)$, set
\begin{equation}\label{def: M0}
\mathbb{M}_0 \coloneqq \big\{\, \varphi\in\Hom(E, L) \mid {\rm either}~ D(\varphi)=\emptyset ~{\rm or}~ \dim D(\varphi)=0 \,\}
\end{equation}
where $D(\varphi)\coloneqq \{\,x\in\p3 \mid \varphi(x) \textrm{ not injective} \,\}$ is the degeneration locus of the morphism $\varphi$. It follows that $F_\varphi\coloneqq \coker(\varphi)$ is reflexive for every $\varphi\in\mathbb{M}_0$.

Consider also the group
\begin{equation}\label{def: G}
\mathbb{G} \coloneqq \left(\Aut(E)\times\Aut(L)\right)\Big/\kappa^*\cdot{\rm id}.
\end{equation}
Note that $\mathbb{G}$ acts on $\mathbb{M}_0$ as follows:
\[ 
(g, h)\cdot\varphi\coloneqq h\circ \varphi\circ g^{-1}. 
\]
It is not difficult to see that if $\varphi'= (g, h)\cdot\varphi$, then $F_{\varphi'}\simeq F_\varphi$. If $F_\varphi$ is $\mu$-stable for every $\varphi\in\mathbb{M}_0$, we obtain {a natural modular morphism 
\[
\Psi \colon \mathbb{M}_0 \longrightarrow \calr(\mathbf{P})
\quad \textrm{given by} \quad 
\Psi(\varphi) = \big[F_\varphi\big] ~,
\]
}
\noindent where $\calr(\mathbf{P})$ denotes the moduli space of $\mu$-stable reflexive sheaves with Hilbert polynomial $\mathbf{P}(t)\coloneqq P_L(t)-P_E(t)$, and $[F]$ denotes the isomorphism class of a sheaf $F$ as a point in $\calr(\mathbf{P})$. 

\begin{lemma}\label{lem: lift}
If $\Hom(L, E)=\Ext^1(E, L)=0$, then an isomorphism $f\colon F_\varphi\overset{\sim}{\to} F_{\varphi'}$ lifts to unique isomorphisms $g\in\Aut(E)$ and $h\in\Aut(L)$ such that $\varphi'=h\circ \varphi\circ g^{-1}$. 
If, in addition, 
\[ 
\extd^1(F_\varphi, F_\varphi) = 
\homd(E, L) - \homd(E, E) - \homd(L, L) + 1 
\]
for every $\varphi\in\mathbb{M}_0$, then {$\mathbb{M}_0$ dominates an irreducible component of $\calr(\mathbf{P})$. }
\end{lemma}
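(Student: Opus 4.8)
The plan is to prove the two assertions in turn, the first (the lifting statement) feeding directly into the dimension count of the second.

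\emph{Lifting.} Given an isomorphism $f\colon F_\varphi\xrightarrow{\ \sim\ }F_{\varphi'}$, I would lift it to a chain isomorphism of the two resolutions \eqref{eq: ELF}, namely $0\to E\xrightarrow{\varphi}L\xrightarrow{\pi}F_\varphi\to 0$ and $0\to E\xrightarrow{\varphi'}L\xrightarrow{\pi'}F_{\varphi'}\to 0$. To produce $h$, I lift the composite $f\circ\pi\colon L\to F_{\varphi'}$ along the surjection $\pi'$; applying $\Hom(L,-)$ to the second sequence shows that, once nonempty, the set of lifts is a torsor under $\Hom(L,E)$ and that the obstruction to existence lies in an $\Ext^1$ group between $E$ and $L$. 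Hence $\Hom(L,E)=0$ yields uniqueness of $h$, while the vanishing of $\Ext^1$ between $E$ and $L$ (which holds in our setting, where $E$ and $L$ are sums of line bundles on $\p3$) yields existence. Once $h$ is fixed, the identity $\pi'\circ h\circ\varphi=f\circ\pi\circ\varphi=0$ forces $h\circ\varphi$ to factor through $\varphi'$, so $g\coloneqq(\varphi')^{-1}\circ h\circ\varphi$ is well defined and unique (as $\varphi'$ is injective) and satisfies $\varphi'=h\circ\varphi\circ g^{-1}$. Applying the same construction to $f^{-1}$ and using that the identity lifts uniquely to $(\mathrm{id}_E,\mathrm{id}_L)$ (again from $\Hom(L,E)=0$) shows that the resulting pairs compose to the identity, so indeed $g\in\Aut(E)$ and $h\in\Aut(L)$.

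\emph{Fibres of $\Psi$ are $\mathbb{G}$-orbits.} The space $\mathbb{M}_0$ of \eqref{def: M0} is a nonempty open subset of the affine space $\Hom(E,L)$, hence irreducible of dimension $\homd(E,L)$. By the lifting statement, $F_\varphi\cong F_{\varphi'}$ if and only if $\varphi'=h\circ\varphi\circ g^{-1}$ for some $(g,h)\in\Aut(E)\times\Aut(L)$, so the fibres of $\Psi$ are exactly the orbits of the $\mathbb{G}$-action from \eqref{def: G}. To compute the fibre dimension I would pin down the stabiliser of $\varphi$: a pair $(g,h)$ fixing $\varphi$ induces an automorphism of the $\mu$-stable, hence simple, sheaf $F_\varphi$, so it lies over $\Aut(F_\varphi)=\kappa^*$, and by uniqueness of lifts this identifies the stabiliser with the scalars $\{(\lambda\,\mathrm{id}_E,\lambda\,\mathrm{id}_L)\}\cong\kappa^*$. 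Therefore every orbit is isomorphic to $\mathbb{G}$, of dimension $\homd(E,E)+\homd(L,L)-1$, and the fibre-dimension theorem gives
\[
\dim\overline{\Psi(\mathbb{M}_0)}=\homd(E,L)-\homd(E,E)-\homd(L,L)+1 .
\]

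\emph{Conclusion and main obstacle.} Set $Z\coloneqq\overline{\Psi(\mathbb{M}_0)}$, an irreducible closed subvariety of $\calr(\mathbf{P})$, and let $[F_\varphi]$ be a general point of $Z$. Deformation theory identifies $T_{[F_\varphi]}\calr(\mathbf{P})$ with $\Ext^1(F_\varphi,F_\varphi)$, so the hypothesis gives $\dim Z=\extd^1(F_\varphi,F_\varphi)=\dim T_{[F_\varphi]}\calr(\mathbf{P})$; combined with the general inequalities $\dim Z\le\dim_{[F_\varphi]}\calr(\mathbf{P})\le\dim T_{[F_\varphi]}\calr(\mathbf{P})$ this forces equality throughout. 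Thus $\calr(\mathbf{P})$ is smooth at $[F_\varphi]$ of dimension $\dim Z$, so $Z$ coincides locally with $\calr(\mathbf{P})$ and is an irreducible component dominated by $\mathbb{M}_0$. The crux of the argument is the first paragraph: showing that an abstract isomorphism of cokernels lifts to an honest pair $(g,h)$, and — equally important — that such a lift is \emph{unique}. The identification of fibres with orbits, the reduction of the stabiliser to $\kappa^*$, and hence the equidimensionality of fibres all rest on this uniqueness, whereas the tangent-space comparison and the local-dimension sandwich are then routine.
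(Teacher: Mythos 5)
Your proof is correct and takes essentially the same route as the paper: lift $f$ through the two resolutions (uniqueness of $h$ and $g$ from $\Hom(L,E)=0$ and injectivity of $\varphi'$), identify the fibres of $\Psi$ with $\mathbb{G}$-orbits whose stabilisers are trivial (via simplicity of the $\mu$-stable sheaf $F_\varphi$), and match $\dim\mathbb{M}_0-\dim\mathbb{G}$ against $\extd^1(F_\varphi,F_\varphi)$ to conclude via the tangent-space bound — the paper's proof is just a terser version of this. One remark worth keeping in mind: the obstruction to the \emph{existence} of the lift $h$ lies in $\Ext^1(L,E)$, whereas the lemma's hypothesis literally reads $\Ext^1(E,L)=0$ (apparently a transposition, which the paper's own proof also glosses over); your fallback to the concrete situation is the right fix, though note that in the applications $E$ is not always a sum of line bundles (e.g.\ $E=\tp3(-4)^{\oplus 2}$), and the needed vanishing $\Ext^1(L,E)=0$ still holds there.
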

\begin{proof}
Consider the following commutative diagram {with exact rows}
\[ 
\xymatrix{
0 \ar[r] & E \ar[r]^{\varphi}\ar@{..>}[d]^{g} & L \ar[r]\ar[dr]^{\tilde f}\ar@{..>}[d]^{h} & F_\varphi \ar[r]\ar[d]^{f} & 0 \\
0 \ar[r] & E \ar[r]^{\varphi'} & L \ar[r] & F_{\varphi'} \ar[r] & 0
}
\]
where $\tilde{f}$ is the obvious composition; the hypothesis implies that $\Hom(L, F_{\varphi'})\simeq\Hom(L, L)$, thus $\tilde{f}$ lift to a unique $h\in\Hom(L, L)$, and then one can find a unique $g\in\Hom(E, E)$ to complete the diagram. The hypothesis $\Hom(L, E)=0$ implies that $g$ and $h$ must be isomorphisms{; reverse the roles of $\varphi$ and $\varphi'$ to find their inverses. Therefore, the fibers of $\Psi$ are $\mathbb{G}$-orbits; more precisely, $\Psi^{-1}([F_\varphi])=\mathbb{G}\cdot\varphi$. A similar computation shows that the stabilizer of any $\varphi \in \mathbb{M}_0$ is trivial.}

Regarding the second claim, note that
\[ 
\dim \mathbb{M}_0 - \dim \mathbb{G} = \homd(E, L) - \homd(E, E) - \homd(L, L) + 1. 
\]
If this quantity, which is equal to the dimension of the {image of $\Psi$}, 
is equal to $\extd^1(F, F)$, then the conclusion follows.
\end{proof}


Next, we give a vanishing criterion for the obstruction space $\Ext^2(F, F)$ that will be useful later.

\begin{lemma}\label{lem: ext2=0}
If $F$ is a $k$-regular reflexive sheaf satisfying $h^0(F(k-3))=h^1(F(k-4))=0$, then $\Ext^2(F, F)=0$.
\end{lemma}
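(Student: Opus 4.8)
The plan is to resolve $F$ by a short exact sequence of the form \eqref{eq: ELF} and transfer the computation of $\Ext^2(F,F)$ to an $\Ext^3$ group, where the cohomological dimension of $\p3$ together with global generation finishes the job. The two extra hypotheses $h^0(F(k-3))=0$ and $h^1(F(k-4))=0$ are designed to match the Serre-dual twists that appear along the way.

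Since $F$ is $k$-regular, the sheaf $F(k)$ is globally generated, so evaluating global sections gives a surjection $\op3(-k)^{\oplus N}\twoheadrightarrow F$ with $N=h^0(F(k))$, whose kernel $E$ is locally free as in \eqref{eq: ELF}. Thus I have
\[
0\longrightarrow E\longrightarrow L\longrightarrow F\longrightarrow 0,\qquad L=\op3(-k)^{\oplus N}.
\]
Applying $\Hom(F,-)$ produces the exact sequence
\[
\Ext^2(F,L)\longrightarrow \Ext^2(F,F)\longrightarrow \Ext^3(F,E)\longrightarrow \Ext^3(F,L).
\]
By Serre duality on $\p3$ one has $\Ext^i(F,\op3(-k))\cong H^{3-i}(F(k-4))^{\ast}$, so the hypotheses give $\Ext^2(F,L)\cong H^1(F(k-4))^{\ast N}=0$ and $\Ext^3(F,L)\cong H^0(F(k-4))^{\ast N}=0$; here $h^0(F(k-4))=0$ follows from $h^0(F(k-3))=0$ because $F$ is torsion-free, so global sections cannot reappear after twisting down. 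Consequently $\Ext^2(F,F)\cong\Ext^3(F,E)$.

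It then remains to show $\Ext^3(F,E)=0$. The key point is that $E$ is $(k+1)$-regular: from the sequence above and the $k$-regularity of both $L$ and $F$, the regularity estimate $\operatorname{reg}(E)\le\max\{\operatorname{reg}(L),\operatorname{reg}(F)+1\}$ yields $\operatorname{reg}(E)\le k+1$. Hence $E(k+1)$ is globally generated, giving a surjection $\op3(-k-1)^{\oplus n_0}\twoheadrightarrow E$; let $E'$ be its kernel. Applying $\Hom(F,-)$ to $0\to E'\to \op3(-k-1)^{\oplus n_0}\to E\to 0$ and using that $\p3$ has cohomological dimension $3$ (so $\Ext^4(F,E')=0$) shows that $\Ext^3(F,E)$ is a quotient of $\Ext^3(F,\op3(-k-1))^{\oplus n_0}$. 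Serre duality identifies $\Ext^3(F,\op3(-k-1))\cong H^0(F(k-3))^{\ast}$, which vanishes by hypothesis. Therefore $\Ext^3(F,E)=0$, and $\Ext^2(F,F)=0$.

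The argument is short once the two exact sequences are in place; the delicate points are purely bookkeeping of the Serre-dual twists — one must check that the generator degree $-k-1$ of $E$ pairs with $H^0(F(k-3))$, and that the degree $-k$ of $L$ pairs with $H^0(F(k-4))$ and $H^1(F(k-4))$ — and the regularity bound for the kernel $E$. I expect this regularity estimate to be the main technical ingredient, since it is exactly what forces the generators of $E$ into degree $k+1$ and thereby links the hypothesis $h^0(F(k-3))=0$ to the vanishing of the surviving $\Ext^3$ term.
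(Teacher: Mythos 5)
Your proof is correct and follows essentially the same route as the paper's: the evaluation sequence $0\to E\to \op3(-k)^{\oplus N}\to F\to 0$, the long exact sequence for $\Hom(F,-)$ with Serre duality reducing $\Ext^2(F,F)$ to $\Ext^3(F,E)$, and then the $(k+1)$-regularity of $E$ together with a second evaluation sequence and the vanishing of $H^0(F(k-3))$. You even supply two details the paper leaves implicit --- that $h^0(F(k-4))=0$ follows from $h^0(F(k-3))=0$ by torsion-freeness, and the regularity bound for $E$ (where, strictly speaking, the step $H^1(E(k))=0$ uses that $k$-regularity of $F$ makes $H^0(L(k))\to H^0(F(k))$ surjective, which holds here since the map is the evaluation) --- so the proposal matches the paper's argument.
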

\begin{proof}
Since $F$ is $k$-regular, there is an epimorphism $\varepsilon\colon H^0(F(k))\otimes\op3(-k)\twoheadrightarrow F$; setting $E\coloneqq \ker(\varepsilon)$, one can check that $E$ is $(k+1)$-regular. Applying the functor $\Hom(F, -)$ to the exact sequence
\[ 
0 \longrightarrow E \longrightarrow H^0(F(k))\otimes\op3(-k) \longrightarrow F \longrightarrow 0 
\]
we obtain, setting $r\coloneqq h^0(F(k))$, {the exact sequence}
\[ 
\Ext^2(F, \op3(-k))^{\oplus r} \longrightarrow \Ext^2(F, F) \longrightarrow \Ext^3(F, E) \longrightarrow \Ext^3(F, \op3(-k))^{\oplus r}. 
\]
Serre duality yields $\Ext^p(F, \op3(-k)) \simeq H^{3-p}(F(k-4))^*$, and our hypothesis says that these vanish for $p=2, 3$. It thus follows that $\Ext^2(F, F) \simeq \Ext^3(F, E)$.

To conclude the proof, we apply the functor $\Hom(F, -)$ to the evaluation epimorphism
\[ 
H^0(E(k+1))\otimes\op3(-k-1) \twoheadrightarrow E 
\]
to see that
\[
\left(H^0(F(k-3))^{\oplus s}\right)^*\simeq \Ext^3(F, \op3(-k-1))^{\oplus s} \twoheadrightarrow \Ext^3(F, E) 
\]
where $s\coloneqq h^0(E(k+1))$; since $h^0(F(-k-3))=0$ by hypothesis, we conclude that $\Ext^2(F, F)=0$, as desired.
\end{proof}



\section{Examples of stable reflexive sheaves with \texorpdfstring{$c_1 = 0$}{c1 = 0} and \texorpdfstring{$c_3\leq 6$}{c3 <= 6}}\label{sec: even-low}

This section provides examples of rank-$2$ stable reflexive sheaves with $c_1=0$ and $2\le c_3\le6$ for each possible spectrum in Table \ref{tab: spec c1=0}. First, we present examples of sheaves with $h^0(F(1))=0$. 

\begin{example}[$c_3 = 2z\leq 8$, spectrum $\{-1^z, 0^{4-z}\}$]
Let $C_1$ and $C_2$ be two smooth elliptic quartic curves intersecting on a reduced zero-dimensional scheme $Z$; let $C = C_1 \cup C_2$, and set $z: =h^0(\calo_Z)$. We further assume that $h^0(\sI_C(3)) = 0$; in particular, $z\leq 4$. Then we have {the exact sequence}
\begin{equation}\label{seq: ex0<6}
 0 \longrightarrow \sI_C \longrightarrow \sI_{C_1} \longrightarrow \OO_{C_2}(-Z) \longrightarrow 0
\end{equation}
which we dualize to get {the exact sequence}
\[
0 \longrightarrow \omega_{C_1} = \OO_{C_1} \longrightarrow \omega_{C} \longrightarrow \omega_{C_2}(Z) = \OO_{C_2}(Z) \longrightarrow 0.
\]
Taking global sections, we obtain
\[ 
H^0(\omega_C) = H^0(\OO_{C_1}) \oplus H^0(\OO_{C_2}(Z)), 
\]
thus $h^0(\omega_C)=z+1$. Moreover, we note that there exists $\xi \in H^0(\omega_C)$ vanishing precisely on $Z$, so that the corresponding extension 
\[
0 \longrightarrow \op3 \longrightarrow F_z(2) \longrightarrow \sI_C(4) \longrightarrow 0
\]
defines a stable reflexive sheaf $F_z$ with $c_1 = 0$, $c_2= 4$, and $c_3 = 2z$. Note that $h^0(F_z(1)) = h^0(\sI_C(3)) = 0$. From \eqref{seq: ex0<6} we also get 
\begin{equation}\label{eq: h1ex0<6}
 h^1(F_z(k-2)) = h^1(\sI_C(k)) = \begin{cases}
 0, & k\leq 0 \\ 4- z, & k=1 \\ 6 - z , & k = 2 \\ 4- z, & k=3 \\ 0 , & k \geq 4
\end{cases}, 
\end{equation}
and $h^2(\sI_C(1)) = 0$. Then $h^1(F_z(-2)) = h^2(F(-1)) = 0$ and neither $1$ nor $-2$ can be in the spectrum of $F_z$. Thus, for $z=1, 2, 3$, the spectrum of $F_z$ is, respectively, $\{-1, 0, 0, 0 \}, \{-1, -1, 0, 0 \}$, and $\{-1, -1, -1, 0 \}$. 

Finally, using \eqref{seq: ex0<6} and the vanishing $h^1(F_z(2))=0$, we obtain $h^0(F_z(2)) = 4+z$. 


\begin{table}[H]
 \centering
 \def\arraystretch{1.5}
 \begin{tabular}{|c|c|c|c|c|c|c|c|c|} \hline
 &$-2$ &$-1$ & $0$ & $1$ & 2 \\ \hline \hline
 $h^0(F(p))$ & $0$ & $0$ & $0$ & $0$ & $4+z$ \\ \hline
 $h^1(F(p))$ & $0$ & $4-z$ & $6-z$ & $4-z$ & $0$ \\ \hline
 $h^2(F(p))$ & $z$ & $0$ & $0$ & $0$ & $0$ \\ \hline
 $h^3(F(p))$ & $0$ & $0$ & $0$ & $0$ & $0$ \\ \hline
 \end{tabular}
\caption{Cohomology table for $F_z$ with spectrum $\{-1^z, 0^{4-z}\}$. }
\label{tab: cohomology 021}
\end{table}

From the construction above, we can produce explicit examples of curves, hence of sheaves in these families using Macaulay2 \cite{M2}. One may verify with the ancillary Macaulay2 file \verb|refshsetup.m2| and the code below that such sheaves are generically unobstructed.

\begin{lstlisting}[language=Macaulay2]
load "refshsetup.m2";
Z = pts 4; -- 4 random points
F = flatten entries ( 
 (gens Z)*(matrix basis(2,Z))*random(R^(10 -degree Z),R^4) 
 );
C = intersect(ideal(F_0,F_1),ideal(F_2,F_3));
F = Serre(C,4); chern F
Ext^2(F,F) -- unobstucted
\end{lstlisting}
    
\end{example}

Proof that these sheaves are unobstructed eludes us at the moment. Nonetheless, by semi-continuity, these examples are enough to conclude that they are generically unobstructed. Hence, these families of sheaves lie on {generically reduced} irreducible components of the expected dimension.

\begin{proposition}\label{prop: evenlow}
For $1 \leq z \leq 3$, the moduli space $\calr(0, 4, 2z)$ has a {generically reduced} irreducible component of the expected dimension $29$, whose general member is a sheaf of spectrum $\{-1^z, 0^{4-z}\}$. 
\end{proposition}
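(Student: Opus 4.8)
The plan is to reduce the whole statement to one deformation-theoretic input and let the general theory do the rest. The sheaves $F_z$ built in the Example above already realize the spectrum $\{-1^z,0^{4-z}\}$ and have the correct Chern classes, and the relative Serre correspondence of Proposition \ref{prop: Serre-families} assembles them into an irreducible family $\calf\subseteq\calr(0,4,2z)$. The one substantive input I would extract from the \texttt{Macaulay2} computation is that a random such $F$ satisfies $\Ext^2(F,F)=0$; since $\varphi\mapsto\extd^2(F_\varphi,F_\varphi)$ is upper semicontinuous, it follows that a general member $F\in\calf$ is unobstructed.

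Next I would turn unobstructedness into the structural conclusions. The vanishing $\Ext^2(F,F)=0$ makes $[F]$ a smooth point of $\calr(0,4,2z)$ with Zariski tangent space $\Ext^1(F,F)$; hence $[F]$ lies on a unique irreducible component $\calr_0$, which is consequently generically reduced, and $\dim\calr_0=\extd^1(F,F)$. To compute this Euler-characteristic side, I would use that $F$ is stable, hence simple, so $\homd(F,F)=1$; that $\mu$-stability gives $\Hom(F,F(-4))=0$, whence $\extd^3(F,F)=0$ by Serre duality; and that Grothendieck--Riemann--Roch yields $\chi(F,F)=\int_{\p3}\operatorname{ch}(F)^\vee\operatorname{ch}(F)\operatorname{td}(\tp3)$. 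A short computation shows the degree-three part of $\operatorname{ch}(F)^\vee\operatorname{ch}(F)$ vanishes, so the answer is independent of $c_3$ and equals $4-8c_2$; with $c_2=4$ this is $-28$, giving $\extd^1(F,F)=1-\chi(F,F)=29$. Thus $\calr_0$ has the expected dimension $29$.

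It remains to pin down the spectrum of the general member of $\calr_0$. Here I would argue purely by semicontinuity: on the irreducible $\calr_0$ the generic member minimizes every $h^1(F(p))$ and $h^2(F(p))$, hence carries the cohomologically smallest of the admissible spectra for $(0,4,2z)$. Comparing, through properties S1)--S2), the two candidate spectra listed in Table \ref{tab: spec c1=0} for each $c_3=2z$, one checks that $\{-1^z,0^{4-z}\}$ is the one with smaller cohomology (for instance $h^1(F(-1))$ or $h^1(F(-2))$ strictly increases for the alternative spectrum). Since the explicit $F_z$ already achieves this minimal spectrum, it is the generic spectrum on $\calr_0$, as claimed.

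The main obstacle is the very first step: proving $\Ext^2(F,F)=0$. A conceptual, computer-free argument appears genuinely difficult, so I would content myself with the explicit \texttt{Macaulay2} certificate at one point and propagate it by semicontinuity, exactly as the text anticipates. A minor point to check en route is that the curves $C=C_1\cup C_2$ satisfy the constancy hypotheses of Proposition \ref{prop: Serre-families}, so that $\calf$ is honestly irreducible; this is guaranteed by the constancy of the cohomology recorded in Table \ref{tab: cohomology 021}.
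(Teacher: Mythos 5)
Your proposal is correct and takes essentially the same route as the paper, which likewise relies on the explicit sheaves $F_z$ from the Example, a one-point Macaulay2 certificate that $\Ext^2(F,F)=0$, and semicontinuity to conclude that the component through $[F_z]$ is generically smooth, hence generically reduced of the expected dimension $29$. Your additional details --- the Riemann--Roch computation $\chi(F,F)=4-8c_2=-28$ giving $\extd^1(F,F)=29$, and the semicontinuity argument pinning down the generic spectrum (note that for $z=1$ the two candidate spectra share the same $h^1(F(-1))=3$, so it is $h^1(F(-2))$ that distinguishes them) --- are accurate and simply make explicit what the paper leaves implicit.
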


We remark that these families of sheaves do not fill their irreducible components. Note that for each $z$, the family $\calc_z$ of curves constructed above is parameterized by a $G(2, 8-z)$-bundle over $z$ copies of $\p3$. Thus, $\dim \calc_z = 32 -z$. The corresponding family of sheaves has dimension $29-z$, by \eqref{eq: dimform-red}.

Next, we provide examples of sheaves with $h^0(F(1))>0$ and the other spectra, starting with $c_3=2$. 

\begin{example}[$c_3 =2$, spectrum $\{-1, -1, 0, 1\}$]
Let $Y$ be a smooth plane conic and $D$ be the double structure on $Y$ given by {the exact sequence}
\[
0 \longrightarrow \sI_D \longrightarrow \sI_Y \longrightarrow \OO_Y(1) \longrightarrow 0.
\]
Then $D$ is a curve of degree $4$ and $p_a(D) = -3$. Dualizing the sequence above and twisting it by $\op3(2)$, we get: 
\[
0 \longrightarrow \omega_Y(2) \simeq \op1(2) \longrightarrow \omega_D(2) \longrightarrow \omega_Y(1) \simeq \op1 \longrightarrow 0.
\]
Hence, $\omega_D(2)$ has global sections vanishing only in dimension $0$. In addition, we have that $h^0(\omega_D(2))=4$ and $h^1(\omega_D(2))=0$. Then let $C = D\cup L$, where $L$ is a line meeting $D$ at one point, hence
\[
0 \longrightarrow \sI_C \longrightarrow \sI_D \longrightarrow \OO_L(-1) \longrightarrow 0.
\]
We have that $C$ is a curve of degree $5$ and genus $-3$. From this sequence, we also get {the exact sequence}
\[
0 \longrightarrow \omega_D(2) \longrightarrow \omega_C(2) \longrightarrow \omega_L(3) \simeq \op1(1) \longrightarrow 0.
\]
Taking global sections, we obtain, since $h^1(\omega_D(2))=0$: 
\[ 
H^0(\omega_C(2)) = H^0(\omega_D(2)) \oplus H^0(\op1(1)). 
\]
It follows that $h^0(\omega_C(2))=6$ and there exists $\xi \in H^0(\omega_C(2))$ vanishing only in dimension $0$. Therefore, $(C, \xi)$ corresponds to a reflexive sheaf $F(1)$ such that $c_1(F) = 0$, $c_2(F) = 4$, and $c_3(F) = 2$. Note that 
\[
h^1(F(-1)) = h^1(\sI_C) = h^1(\sI_D) = h^0(\OO_Y(1)) = 3.
\]
Then, the spectrum of $F$ is $\{-1, -1, 0, 1\}$.

\begin{table}[H]
 \centering
 \def\arraystretch{1.5}
 \begin{tabular}{|c|c|c|c|c|c|c|c|c|} \hline
 &$-3$ &$-2$ &$-1$ & $0$ & $1$ & 2 & $3$ \\ \hline \hline
 $h^0(F(p))$ & $0$ & $0$ & $0$ & $0$ & $1$ & $7$ & $21$ \\ \hline
 $h^1(F(p))$ & $0$ & $1$ & $3$ & $5$ & $4$ & $2$ & $0$ \\ \hline
 $h^2(F(p))$ & $5$ & $2$ & $0$ & $0$ & $0$ & $0$ & $0$ \\ \hline
 $h^3(F(p))$ & $0$ & $0$ & $0$ & $0$ & $0$ & $0$ & $0$ \\ \hline
 \end{tabular}
 \caption{Cohomology table for $F$ with spectrum $\{-1, -1, 0, 1\}$ as described above. }
 \label{tab: cohomology 022}
\end{table}

With the ancillary Macaulay2 file \verb|refshsetup.m2| and the code below, one can check that the construction below gives an unobstructed reflexive sheaf $F$. 

\begin{lstlisting}[language=Macaulay2]
load "refshsetup.m2"
q = x_0*x_3 + x_1^2+ x_2^2 + x_2*x_3;
D = topComponents((ideal(x_0,q))^2 + ideal(x_0*x_3^3-q*x_2^2));
C = intersect(D, ideal(x_1,x_2));
degree C, genus C
F = Serre(C,2); chern F
Ext^2(F,F) --unobstructed
\end{lstlisting}
  
\end{example}


\begin{example}[$c_3 =4$, spectrum $\{-2, -1, 0, 1\}$]
For $c_3 = 4$, we construct the curve $C$ in the following way. Let $D$ be a double conic of genus $-3$ as before. Then define $C = D\cup L$, where $L$ is a general line in the same plane as $D_{\rm red}$. We get the following exact sequence.
\begin{equation}\label{seq: 0441}
 0 \longrightarrow \sI_C \longrightarrow \sI_D \longrightarrow \OO_L(-2) \longrightarrow 0
\end{equation}
In particular, $p_a(C) = \chi(\sI_C) = \chi(\sI_D) - \chi(\OO_L(-2)) = -2$. Dualizing this sequence, we get
\[
0 \longrightarrow \omega_D \longrightarrow \omega_C \longrightarrow \omega_L(2) \simeq \OO_L \longrightarrow 0, 
\]
hence $H^0(\omega_C(2))$ has sections vanishing only in dimension $0$; and $h^0(\omega_C(2)) = h^0(\omega_D(2)) + h^0(\OO_L) = 7$. Therefore, a general $\xi \in H^0(\omega_C(2))$, $(C, \xi)$ corresponds to a reflexive sheaf $F(1)$, such that $c_1(F) = 0$, $c_2(F) = c_3(F) = 4$. Moreover, taking the intersection of $C$ with its supporting plane $H$ yields
\[
0 \longrightarrow \sI_Y(-1) \longrightarrow \sI_C \longrightarrow \sI_{Z/H}(-3) \longrightarrow 0
\]
where $Y=D_{\rm red}$ and $Z$ is $0$-dimensional of length $4$, cf. \cite[Proposition 2.1]{HS}. From this sequence, one can show that $h^1(F(-1)) = h^1(\sI_C) \geq 3$. Hence, the spectrum is $\{-2, -1, 0, 1\}$.

\begin{table}[H]
 \centering
 \def\arraystretch{1.5}
 \begin{tabular}{|c|c|c|c|c|c|c|c|c|} \hline
 &$-3$ &$-2$ &$-1$ & $0$ & $1$ & 2 & $ 3$ & $4$ \\ \hline \hline
 $h^0(F(p))$ & $0$ & $0$ & $0$ & $0$ & $2$ & $9$ & $23$ & $48$ \\ \hline
 $h^1(F(p))$ & $0$ & $1$ & $3$ & $4$ & $4$ & $3$ & $1$ & $0$\\ \hline
 $h^2(F(p))$ & $6$ & $3$ & $1$ & $0$ & $0$ & $0$ & $0$ & $0$\\ \hline
 $h^3(F(p))$ & $0$ & $0$ & $0$ & $0$ & $0$ & $0$ & $0$ & $0$\\ \hline
 \end{tabular}
 \caption{Cohomology table for $F$ with spectrum $\{-2, -1, 0, 1\}$ as described above. }
 \label{tab: cohomology 042}
\end{table}

For every example we computed in Macaulay2, we got $\extd^2(F, F) = 1$. We wonder whether the component containing these sheaves is singular, oversized, or generically non-reduced. One can compute some examples with the code below: 

\begin{lstlisting}[language=Macaulay2]
load "refshsetup.m2";
q = x_0*x_3 + x_1^2+ x_2^2 + x_2*x_3;
Y = ideal(x_0,q);
D = (Y^2 + ideal(x_0*random(3, R)-q*random(2, R))); 
L = ideal(x_0,x_1);
C = intersect(D, L); degree C, genus C
F = Serre(C, 2); chern F
Ext^2(F, F) 
\end{lstlisting}

\end{example}

\begin{example}[$c_3 =6$, spectrum $\{-2, -1, 0, 0\}$]
For $c_3 = 6$, we start with a double structure of genus $-2$ on a conic $Y$ given by 
\begin{equation}\label{seq: 0461}
 0 \longrightarrow \sI_D \longrightarrow \sI_Y \longrightarrow \OO_Y(P) \longrightarrow 0,
\end{equation}
where $P\in Y$ is a point. Then, let $C = D \cup L$, where $L$ is a general line in the same plane as $Y$. As in the previous case, we have
\[
0 \longrightarrow \sI_C \longrightarrow \sI_D \longrightarrow \OO_L(-2) \longrightarrow 0.
\]
In particular $p_a(C) = p_a(D) +1 = -1$. Dualizing this sequence, we get
\[
0 \longrightarrow \omega_D \longrightarrow \omega_C \longrightarrow \omega_L(2) \simeq \OO_L \longrightarrow 0.
\]
Dualizing the sequence \eqref{seq: 0461} we also get
\[
0 \longrightarrow \omega_Y(2) \simeq \OO_{\p1}(2) \longrightarrow \omega_D(2) \longrightarrow \omega_Y(1-P) \simeq \OO_{\p1}(1) \longrightarrow 0.
\]
We then conclude that there exist $\xi\in H^0(\omega_C(2))$ vanishing only in dimension $0$. Moreover, $h^0(\omega_C(2)) = h^0(\omega_D(2)) + 1 = 6$. Therefore, $(C, \xi)$ corresponds to a reflexive sheaf $F(1)$ such that $c_1(F) = 0$, $c_2(F)=4$, and $c_3(F) = 6$. As in the previous case, we intersect $C$ with its supporting plane $H$ to get
\[
0 \longrightarrow \sI_Y(-1) \longrightarrow \sI_C \longrightarrow \sI_{Z/H}(-3) \longrightarrow 0
\]
where $Z$ is $0$-dimensional of length $3$. Therefore, $h^1(F(-1)) = h^1(\sI_C) \geq 2$ and $F$ is of spectrum $\{-2, -1, 0, 0\}$.

\begin{table}[H]
 \centering
 \def\arraystretch{1.5}
 \begin{tabular}{|c|c|c|c|c|c|c|c|} \hline
 &$-2$ &$-1$ & $0$ & $1$ & 2 & $ 3$ \\ \hline \hline
 $h^0(F(p))$ & $0$ & $0$ & $0$ & $2$ & $9$ & $23$ \\ \hline
 $h^1(F(p))$ & $0$ & $2$ & $3$ & $3$ & $2$ & $0$ \\ \hline
 $h^2(F(p))$ & $3$ & $1$ & $0$ & $0$ & $0$ & $0$ \\ \hline
 $h^3(F(p))$ & $0$ & $0$ & $0$ & $0$ & $0$ & $0$ \\ \hline
 \end{tabular}
 \caption{Cohomology table for $F$ with spectrum $\{-2, -1, 0, 0\}$ as described above. }
 \label{tab: cohomology 062}
\end{table}

The code below can produce examples of unobstructed sheaves.


\begin{lstlisting}[language=Macaulay2]
load "refshsetup.m2"
q = x_0*x_3 + x_1^2+ x_2^2 + x_2*x_3;
p1 = x_1*random(1,R)+x_2*random(1,R)+x_3*random(1,R);
p2 = x_1*random(2,R)+x_2*random(2,R)+x_3*random(2,R);
D = topComponents ideal(x_1^2, x_1*q, q^2, x_1*p2-q*p1);
degree D, genus D
C = intersect(D, ideal(x_0,x_1)); degree C, genus C
F = Serre(C,2);
chern F
Ext^2(F,F) --unobstructed
\end{lstlisting}
  
\end{example}

\begin{remark} Every example computed in this section and throughout this work is included in the ancillary file \verb|examples.m2|. Despite our efforts, we were unable to construct sheaves with spectra $\{-1, -1, 0, 1\}$, $\{-2, -1, 0, 1\}$, or $\{-2, -1, 0, 0\}$, such that $h^0(F(1)) = 0$. Neither could we prove they do not exist. 
\end{remark}


\section{Stable reflexive sheaves with \texorpdfstring{$c_1 = 0$}{c1 = 0} and \texorpdfstring{$c_3=8$}{c3=8}}\label{sec: even-8}

Let $F$ be a stable rank-$2$ reflexive sheaf with $c_1 = 0$, $c_2 = 4$, and $c_3 = 8$. By Riemann-Roch and the possible spectra in Table \ref{tab: spec c1=0}, we have that $h^0(F(1))= h^1(F(1))$, $h^2(F) = 0$ and $h^1(F(-1)) \leq 1$. By Lemma \ref{lem: h0F1>0}, $h^0(F(1)) \leq 2$. With this fact in mind, we set up the notation
\[
\calr(0, 4, 8)_{l, m} \coloneqq \left\{ F \in\calr(0, 4, 8) \mid h^0(F(1))=l, \, h^1(F(-1)) = m \right\}, 
\]
where $0\leq l\leq 2$ and $0 \leq m\leq 1$; so that 
\[
\calr(0, 4, 8) = \bigsqcup_{l, m} \calr(0, 4, 8)_{l, m}.
\]
Note that the spectrum is constant in each piece: $\{-1, -1, -1\}$ for $\calr(0, 4, 8)_{l, 0}$, and $\{-2, -1, 0\}$ for $\calr(0, 4, 8)_{l, 1}$.

\begin{theorem}\label{thm: 048}
 The space $\calr(0, 4, 8)$ is generically smooth of expected dimension $29$. Moreover, $\calr(0, 4, 8)_{\rm red} = \overline{\calr(0, 4, 8)_{0, 0}}$, $\calr(0, 4, 8)_{0, 1} = \emptyset$, and the other $\calr(0, 4, 8)_{l, m}$ are hypersurfaces.
\end{theorem}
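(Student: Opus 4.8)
The plan is to study each locally closed piece $\calr(0,4,8)_{l,m}$ separately through the Serre correspondence of Section~\ref{sec: serre}, and then to glue the results. Since Lemma~\ref{lem: h0F1>0}(i) forces $l=h^0(F(1))\le 2$ for $c_3=8$ (the value $l=3$ would put $-3$ in the spectrum, hence $c_3\ge 12$), there are only the six pieces listed. When $l=0$ I use $k=2$: by \eqref{eq: twist2-deg+g} the sheaf $F(2)$ corresponds to a curve $C$ of degree $8$ and arithmetic genus $5$, and twisting \eqref{eq: twist} down identifies $l=h^0(\sI_C(3))$ and $m=h^1(\sI_C(1))=h^1(\calo_C(1))=h^0(\omega_C(-1))$. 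When $l\ge 1$ I use $k=1$: by \eqref{eq: twist1-deg+g} the sheaf $F(1)$ corresponds to a curve $C$ of degree $5$ and arithmetic genus $0$, with $l=1+h^0(\sI_C(2))$ and $m=h^1(\sI_C)$. The argument then amounts to identifying, for each $(l,m)$, the corresponding family of curves, transporting dimensions across the correspondence via \eqref{eq: dimform-red}, and controlling obstructions via Lemma~\ref{lem: ext2=0}.

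For the main piece $\calr(0,4,8)_{0,0}$ the curve $C$ is a degree-$8$, arithmetic genus-$5$ curve with $h^0(\sI_C(3))=0$ and $h^0(\omega_C(-1))=0$; the generic such $C$ is smooth, and a $\chi$-count gives $h^0(\sI_C(3))=h^1(\sI_C(3))$ and $h^0(\omega_C(-1))=h^1(\calo_C(1))$, both vanishing generically, so these are the general members of an irreducible $32$-dimensional ($=4d$) component of the Hilbert scheme. Proposition~\ref{prop: Serre-families} and \eqref{eq: dimform-red}, with $h^0(\omega_C)=5$ and $h^0(F(2))=8$, yield an irreducible family of sheaves of dimension $32+5-8=29$. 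To obtain generic smoothness I would apply Lemma~\ref{lem: ext2=0} with $k=3$: the sheaf is $3$-regular because $h^2(F(1))=0$ by Lemma~\ref{lem: h2}, $h^3(F)=0$ by stability, and $h^1(F(2))=h^1(\sI_C(4))=0$ for the general $C$; the remaining hypotheses $h^0(F)=0$ and $h^1(F(-1))=h^1(\calo_C(1))=0$ are stability and $m=0$. Hence $\extd^2(F,F)=0$ at the general point, and $\calr(0,4,8)_{0,0}$ is generically smooth of the expected dimension $29$.

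For the empty piece I would argue by contradiction. An $F\in\calr(0,4,8)_{0,1}$ would correspond to a degree-$8$, genus-$5$ curve $C$ with $h^0(\sI_C(2))=0$ (from $h^0(F)=0$), $h^0(\sI_C(3))=0$ (from $l=0$), and $\omega_C\cong\calo_C(1)$ (from $m=1$, since then $h^0(\omega_C(-1))=1$); a direct computation of $\chi(F(t))$ against the spectrum $\{-2,-1,-1,0\}$ shows moreover that the Rao function $\big(h^1(\sI_C(t))\big)_{t=0,1,2,3}$ equals $(0,1,2,0)$. I would rule such a curve out by realizing $C$ as a projection to $\p3$ of the canonical genus-$5$ curve $\widetilde C\subset\p4$ and using that $\widetilde C$ lies on quadric hypersurfaces to force a cubic surface through $C$, contradicting $h^0(\sI_C(3))=0$; alternatively one can try to exclude the module structure underlying the Rao function $(0,1,2,0)$ directly. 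This nonexistence is, together with the gluing below, the step I expect to be the main obstacle, as it requires turning the cohomological conditions defining $\calr(0,4,8)_{0,1}$ into sharp geometry of the octic curve.

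It remains to treat the pieces with $l\ge 1$ and to assemble everything. Here $C$ is a quintic of arithmetic genus $0$, and Lemma~\ref{lem: curv5quad} classifies the relevant types once $h^0(\sI_C(2))>0$: the piece $\calr(0,4,8)_{1,0}$ is governed by the general (non-quadric) rational quintics, a $20$-dimensional family which by \eqref{eq: dimform-red} with $h^0(\omega_C(2))=9$ produces a sheaf family of dimension $20+9-1=28$, i.e.\ the hypersurface; the remaining pieces $\calr(0,4,8)_{1,1}$, $\calr(0,4,8)_{2,0}$ and $\calr(0,4,8)_{2,1}$ are cut out by the further (closed) conditions $h^1(\sI_C)=1$ and $h^0(\sI_C(2))=1$, and I would show each lies in the closure of $\calr(0,4,8)_{1,0}$ or of the main piece. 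Finally, since $h^0(F(1))$ is upper semicontinuous, $\{h^0(F(1))=0\}$ is open, and by $\calr(0,4,8)_{0,1}=\emptyset$ it coincides with the irreducible $29$-dimensional locus $\calr(0,4,8)_{0,0}$, whose complement is the hypersurface $\{h^0(F(1))\ge 1\}$ of dimension $28$. To conclude $\calr(0,4,8)_{\rm red}=\overline{\calr(0,4,8)_{0,0}}$ I would verify that no boundary piece is a separate component, ideally by checking $\extd^2(F,F)=0$ at a general point of each via Lemma~\ref{lem: ext2=0} (using the curve cohomology of Lemma~\ref{lem: curv5quad}), so that such points lie on a smooth $29$-dimensional piece and hence in the closure of the codimension-one complement $\calr(0,4,8)_{0,0}$; this last degeneration statement is the other part I expect to be delicate.
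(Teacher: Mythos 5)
Your overall stratification, the translation through Serre correspondence, and the count $\dim\calr(0,4,8)_{1,0}=20+9-1=28$ all match the paper. But there are two genuine gaps, both at the points you yourself flag as delicate. First, the emptiness of $\calr(0,4,8)_{0,1}$ is never proved: your sketch (project a canonical genus-$5$ curve from $\p4$, or exclude the Rao module with function $(0,1,2,0)$) is not carried out, and moreover the reduction from $h^0(\omega_C(-1))=1$ to $\omega_C\cong\OO_C(1)$ presupposes that $C$ is integral, which is not automatic for the zero scheme of a section of $F(2)$ (it may be reducible or non-reduced), so the canonical-projection strategy would not even cover all cases. The paper avoids curve geometry entirely here: since $\chi(F(1))=0$ and $h^2(F(1))=0$ by Lemma \ref{lem: h2}, the hypothesis $l=0$ forces $h^1(F(1))=0$, hence $F$ is $2$-regular; iterating minimal resolutions yields $0\to\tp3(-4)^{\oplus 2}\to\op3(-2)^{\oplus 8}\to F\to 0$, whence $h^2(F(-1))=h^3(\tp3(-5)^{\oplus 2})=0$, so the spectrum is $\{-1,-1,-1,-1\}$ and $m=0$. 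This one resolution simultaneously gives $\calr(0,4,8)_{0,1}=\emptyset$ and, via Lemmas \ref{lem: lift} and \ref{lem: ext2=0}, smoothness (on the whole stratum, not just generically) and unirationality of $\calr(0,4,8)_{0,0}$ in dimension $29$; it also sidesteps what your treatment of the main stratum would still owe, namely irreducibility of the family of degree-$8$, genus-$5$ curves with the prescribed cohomology and generic vanishing of $h^1$ of the normal bundle.

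Second, the assembly step is partly wrong and partly missing. The strata $\calr(0,4,8)_{1,1}$, $\calr(0,4,8)_{2,0}$, $\calr(0,4,8)_{2,1}$ cannot lie in $\overline{\calr(0,4,8)_{1,0}}$: the paper computes each to be $28$-dimensional while $\overline{\calr(0,4,8)_{1,0}}$ is irreducible of dimension $28$ and disjoint from them, so containment would force equality of closures. Your fallback, checking $\extd^2(F,F)=0$ at a general point of each boundary stratum, is not established and is dubious for the $m=1$ strata; compare the analogous stratum $\calr(-1,4,8)_{1,1}$, where Proposition \ref{prop: extremal-ext} gives $\extd^2(F,F)=1$ for \emph{every} member. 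The mechanism the theorem actually rests on is the a priori bound that every irreducible component of $\calr(0,4,c_3)$ has dimension at least $8c_2-2c_1^2-3=29$ (cf. \cite{H2}): no $28$-dimensional stratum can then contain the generic point of a component, and since $\calr(0,4,8)_{0,0}$ is irreducible of dimension $29$, one gets $\calr(0,4,8)_{\rm red}=\overline{\calr(0,4,8)_{0,0}}$ at once. Finally, the hypersurface assertion requires dimension counts you omit for the three strata with $(l,m)\neq(0,0),(1,0)$: the paper obtains $\dim=28$ for $\calr(0,4,8)_{1,1}$ by linking the quintic to degree-$4$ curves of genus $-1$ and invoking the Nollet--Schlesinger classification, and for $\calr(0,4,8)_{2,0}$ and $\calr(0,4,8)_{2,1}$ by exhibiting $F$ as an extension of $\OO_Q(0,-3)$, respectively $\sI_{Z/H}(-2)$, by $\op3(-1)^{\oplus 2}$, respectively by $K\in\calr(-1,2,4)$, and counting extension parameters. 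Without these counts, and without a correct emptiness and gluing argument, the statement as a whole is not yet proved.
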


\begin{table}[H]
 \centering
 \def\arraystretch{1.5}
 \begin{tabular}{|c|c|c|c|c|c|c|c|c|} \hline
 &$-3$ &$-2$ &$-1$ & $0$ & $1$ & 2 & $ 3$ \\ \hline \hline
 $h^0(F(p))$ & $0$ & $0$ & $0$ & $0$ & $l$ & $8+k$ & $24$ \\ \hline
 $h^1(F(p))$ & $0$ & $0$ & $m$ & $2$ & $l$ & $k$ & $0$ \\ \hline
 $h^2(F(p))$ & $8$ & $4$ & $m$ & $0$ & $0$ & $0$ & $0$ \\ \hline
 $h^3(F(p))$ & $0$ & $0$ & $0$ & $0$ & $0$ & $0$ & $0$ \\ \hline
 \end{tabular}
 \caption{Cohomology table for $F\in \calr(0, 4, 8)_{l, m}$, where $k=m(l-1)$. }
 \label{tab: cohomology 048}
\end{table}

\subsection{Description of \texorpdfstring{$\calr(0, 4, 8)_{0, m}$}{R(0, 4, 8) 0, m} } If $h^0(F(1)) = 0$, then $h^1(F(1)) = 0$ and $F$ is $2$-regular due to Castelnuovo-Mumford's criterion. Hence, we get
\begin{equation}\label{seq: 0480}
 0 \longrightarrow E \longrightarrow \op3(-2)^{\oplus 8} \longrightarrow F \longrightarrow 0
\end{equation}
where $E$ is a rank $6$ locally free sheaf. It follows that $E$ is $3$-regular, hence
\[
0 \longrightarrow E' \longrightarrow \op3(-3)^{\oplus 8} \longrightarrow E \longrightarrow 0,
\]
Where $E'$ is a direct sum of line bundles. From a Chern class computation, we get $E' = \op3(-4)^{\oplus 2}$. We conclude, cf. \cite[proof of Lemma 34]{GJM}, that $E = \tp3(-4)^{\oplus 2}$ and, in particular, we have $h^2(F(-1)) = h^3(E(-1)) = 0$, hence the spectrum of $F$ is $\{-1, -1, -1, -1\}$. This implies that $\calr(0, 4, 8)_{0, 1} = \emptyset$. 

On the other hand, applying Lemmas \ref{lem: ext2=0} and \ref{lem: lift} to the family described by \eqref{seq: 0480} we get that $\calr(0, 4, 8)_{0, 0}$ is smooth and unirational of dimension $29$. Hence, the closure $\overline{\calr(0, 4, 8)_{0, 0}}$ is an irreducible component of $\calr(0, 4, 8)$.

\subsection{Description of \texorpdfstring{$\calr(0, 4, 8)_{1, m}$}{R(0, 4, 8) 1, m}} \label{sec: 048-1m}
Let $ F \in \calr(0, 4, 8)_{1, m}$ and consider a curve $C$ defined by a section in $H^0(F(1))$. Note that $C$ has degree $5$ and genus $0$, and satisfies: $h^0(\sI_C(2)) = 0$, $h^0(\sI_C(3)) \geq 4$, and $h^1(\sI_C) = m$. We have two cases: either there exist coprime elements $f, g \in H^0(\sI_C(3))$, or there exists a linear form $h$ dividing every element of $H^0(\sI_C(3))$. The other possibility would be having a degree two polynomial $q$ dividing $H^0(\sI_C(3))$, but this would imply $q \in H^0(\sI_C(2))$ since $h^0(\sI_C(3))\geq 4$.

\subsubsection{ Case 1} Assume that there exist $f, g \in H^0(\sI_C(3))$ relatively prime. Then $V(f, g)$ links $C$ to a curve $Y$ of degree $4$ and genus $-1$, which are described in \cite[Proposition 6.1]{NS-deg4}. The Hilbert Scheme $\hilb^{4,-1}(\p3)$ has three irreducible components: $\calh_1$ composed of extremal curves; $\calh_2$ composed of subextremal curves, the general member is the disjoint union of two conics; $\calh_3$ whose general member is the disjoint union of a twisted cubic and a line. In our case, $h^1(\sI_C(2)) = h^1(F(1)) = 1$, then, by \ref{P2}, we have $h^1(\sI_Y) = 1 < 2$ hence $Y$ cannot be extremal. Therefore, we are left with two cases to deal with.

\paragraph{$\calh_3$ Twisted cubic plus line}\label{case: twc+line} Let $Y$ be a (possibly degenerate) disjoint union of a twisted cubic and a line, then $M_Y \simeq R/(x_0, x_1, x_2^2, x_2x_3, x_3^2)$, cf. the proof of \cite[Proposition 4.2]{NS-deg4}. By \ref{P4}, we have that $M_C = \Ext^4_R(M_Y, R(-6))$, hence we can compute the resolution
\[
 0 \rightarrow R(-6) \overset{B}{\rightarrow} \begin{matrix} R(-5)^{\oplus 2}\\ \oplus\\ R(-4)^{\oplus 3} \end{matrix} \rightarrow \begin{matrix} R(-4) \\ \oplus \\ R(-3)^{\oplus 8} \end{matrix} \rightarrow R(-2)^{\oplus 7} \rightarrow R(-1)^{\oplus 2} \rightarrow M_C \rightarrow 0.
\]
In particular, $m = h^1(\sI_C) = 0$. Following \cite[II \S 5.1]{MDP} we can build a resolution for $\sI_C$; this construction relies on knowing the postulation character $\gamma_C = \{-1, -1, -1, 3, 1, -1\}$, which can be computed using \ref{P3}. We have then
\begin{equation}\label{res: 08twc+line}
 0 \longrightarrow \op3(-6) \overset{B\oplus 0}{\longrightarrow} \begin{matrix}
 \op3(-5)^{\oplus 2} \\ \oplus \\ \op3(-4)^{\oplus 3+ c}
 \end{matrix} \overset{A}{\longrightarrow} \begin{matrix}
 \op3(-3)^4 \\ \oplus \\ \op3(-4)^{\oplus c+1}
 \end{matrix} \longrightarrow \sI_C \longrightarrow 0
\end{equation}
for some $c \geq 0$. We may write the matrix $A$ as a block matrix, 
\[
A = \begin{bmatrix}
 A_{11}^{4\times 2} & A_{12}^{4\times c+3 } \\ A_{21}^{c+1 \times 2} & 0^{c+1\times c+3}
\end{bmatrix},
\]
Where the formats of the blocks are indicated in superscript. Note that $A$ must have generic rank $4+c$. But, on the other hand, $A_{21}$ has generic rank at most $1$, hence $c \leq 1$. If $c=1$ then $A$ has rank at most $4$ along the quartic surface $V(\det A_{12})$, hence $c=0$.

The scheme $\calh_3$ parameterizing disjoint unions of a twisted cubic and a line is irreducible of dimension $12+4 = 16$. For any $Y\in \calh_3$, we have that $h^0(\sI_Y(3)) = 6$. Thus, the family $\mathcal{G}_1$ of curves $C$ described by \eqref{res: 08twc+line} is irreducible of dimension 
\[
\dim \mathcal{G}_1 = 16 + 2h^0(\sI_Y(3)) - 2h^0(\sI_C(3)) = 20, 
\]
cf. \eqref{eq: dimform-liaison}. We remark that a general member of $\mathcal{G}_1$ is a smooth rational quintic curve not contained in a quadric. Indeed, the moduli space of smooth quintics also has dimension $20$, and a general $X$ satisfies $h^0(\sI_X(2))= 0$. By the Riemann-Roch Theorem, we have $h^0(\sI_X(3))\geq 4$, and there are two cubics without a common factor containing $X$. Choosing two of these cubic surfaces, we link $X$ to a curve $Y$ of degree $4$ and genus $-1$ satisfying $h^1(\sI_Y(2)) = h^1(\sI_X) = 0$ hence $Y \in \calh_3$. Therefore, for $C\in \mathcal{G}_1$ we get 
\[
h^0(\omega_C(2)) = h^1(\OO_C(-2)) = h^1(\OO_{\p1}(-10)) = 9.
\]

\paragraph{$\calh_2$ Two conics}\label{case: twoconics} Let $Y$ be a (possibly degenerate) disjoint union of two plane conics, then $Y$ is subextremal with Rao Module $M_Y \simeq R/(x_0, x_1, p,q)$, where $p$ and $q$ are quadratic polynomials in the variables $x_2,x_3$. It follows that $M_C = \Ext^4_R(M_Y, R(-6)) \cong M_Y$ and its minimal resolution is given by the Koszul complex: 
\[
 0 \rightarrow R(-6) \overset{B}{\rightarrow} \begin{matrix} R(-5)^{\oplus 2}\\ \oplus\\ R(-4)^{\oplus 2} \end{matrix} \rightarrow \begin{matrix} R(-4) \\ \oplus \\ R(-3)^{\oplus 4} \\ \oplus \\ R(-2)\end{matrix} \rightarrow \begin{matrix} R(-2)^{\oplus 2}\\ \oplus\\ R(-1)^{\oplus 2} \end{matrix} \rightarrow R \rightarrow M_C \rightarrow 0.
\]
In particular, $m= h^1(\sI_C) = 1$.

As before, $\gamma_C = \{-1, -1, -1, 3, 1, -1\}$ and we use \cite[II \S 5.1]{MDP} to construct a resolution for $\sI_C$: 
\begin{equation}\label{res: 08twoconics}
 0 \longrightarrow \op3(-6) \overset{B\oplus 0}{\longrightarrow} \begin{matrix}
 \op3(-5)^{\oplus 2} \\ \oplus \\ \op3(-4)^{\oplus 2+ c}
 \end{matrix} \overset{A}{\longrightarrow} \begin{matrix}
 \op3(-3)^4 \\ \oplus \\ \op3(-4)^{\oplus c}
 \end{matrix} \longrightarrow \sI_C \longrightarrow 0
\end{equation}
where we may argue as before, analyzing $A$, to conclude that $c\leq 1$. Any curve with a resolution as above with $c=1$ is a specialization of one with $c=0$, cf. \cite[Theorem 4.1]{Kleppe-curves}. Let us denote by $\mathcal{G}_2$ the family of curves described by \eqref{res: 08twoconics}.

For $c=0$, a general curve $C$ in $\mathcal{G}_2$ is the disjoint union of an elliptic quartic and a line. Indeed, let $h_1, h_2 \in H^0(\op3(1))$ and $q_1, q_2 \in H^0(\op3(2))$ such that $Y = V(h_1, q_1) \cup V(h_2, q_2)$. Then, $C$ is residual to $Y$ inside $V(h_1q_2, h_2q_1)$. It follows that $C = V(h_1, h_2) \cup V(q_1, q_2)$. In particular, 
\[
\dim \mathcal{G}_2 = 4+16 = 20, 
\]
and for a general curve $C = L \cup C'$, $C'$ elliptic quartic, we have
\[
h^0(\omega_C(2)) = h^0(\omega_L(2)) + h^0(\omega_{C'}(2)) = 9.
\]

For $c=1$ we may write $B\oplus 0 = \begin{bmatrix} x_0 & x_1 & q_1 & q_2 & 0\end{bmatrix}^T$, where $\deg q_1 = \deg q_2 = 2$. Since $A\cdot (B\oplus 0) = 0$, we can write $A$ as
\[
A = \begin{bmatrix}
 q_2 & 0 & 0 & -x_0 & l_1 \\
 q_1 & 0 & -x_0 & 0 & l_2 \\
 0 & q_2 & 0 & -x_1 & l_3 \\
 0 & q_1 & -x_1 & 0 & l_4 \\
 x_1 & -x_0 & 0& 0& 0
\end{bmatrix}
\]
where $l_{j}$ are linear forms. Then the $4\times 4$ minors of $A$ span the ideal $I$ whose saturation is 
\[
I^{\rm sat} = ( q_{24}x_0, q_{24}x_1 , q_{13}x_0, q_{13}x_1, q_{13}q_1 -q_{24}q_2)
\]
where $q_{13} = l_3x_0-l_1x_1$ and $q_{24} = l_4x_0-l_2x_1$. When the $l_j$ are general enough, $I$ defines the union of the twisted cubic $C' = V(q_{13}, q_{24}, l_1l_4-l_2l_3)$ and the double line $C'' = V(x_0^2, x_0x_1, x_1^2, q_{13}q_1 -q_{24}q_2)$ of genus $-3$. Note that $C'\cap C''$ must have length $4$ so that $p_a(C) = 0$. Then $C'$ is tangent to $C''$, although transverse to $C''_{\rm red}$.

\subsubsection{ Case 2}\label{case: hdivides} Now assume that $H^0(\sI_C(3))$ is spanned by $hq_1, \dots , hq_r$ with $r\geq 4$ and $q_j \in H^0((\op3(2))$ relatively prime. Let $H = V(h)$ and $Y$ be the residual scheme to $C\cap H$ in $C$. Then we have
\begin{equation}\label{seq: hdivides}
 0 \longrightarrow \sI_Y(-1) \longrightarrow \sI_C \longrightarrow \sI_{Z/H}(d-5) \longrightarrow 0
\end{equation}
where $d = \deg Y$ and $Z\subset Y\cap H$. Note that $Y\subset V(q_1, \dots, q_r)$ and is not a plane curve, since $h^0(\sI_C(2)) = 0$. Since $q_1, \dots, q_r$ are relatively prime and $r\geq 4$, we have $d \leq2$. Hence $d=2$ and $Y$ has genus $-1$; indeed, $h^0(\sI_Y(1)) = 0$ implies $p_a(Y)<0$, but for $p_a(Y) \leq -2$ it would have $h^0(\sI_Y(2)) = 3$, cf. \cite[\S 1]{N-deg3}. It follows from the sequence above that $0 = p_a(C) = 1-h^0(\OO_Z) $, hence, $Z$ is a (reduced) point. Moreover, from $h^1(\sI_Y(-1)) = h^2(\sI_Y(-1))=0$ we get 
$m = h^1(\sI_C) = h^1(\sI_{Z/H}(-3)) = 1$. 

A general member of the family of curves described by \eqref{seq: hdivides} is a disjoint union of a line and degenerate elliptic quartic, which is a union of a plane cubic and a line meeting at one point. These are specializations of curves in $\mathcal{G}_2$.

\subsubsection{Summary of the families}
A sheaf $ F \in \calr(0, 4, 8)_{1, 0}$ must correspond to a curve $C\in \mathcal{G}_1$, described in \ref{case: twc+line}. We have that $\mathcal{G}_1$ is irreducible of dimension $20$, and a general member satisfies $h^0(\omega_C(2)) = 9$. Then $\calr(0, 4, 8)_{1, 0}$ is irreducible of dimension
\[
\dim \calr(0, 4, 8)_{1, 0} = \dim \mathcal{G}_1 + h^0(\omega_C(2)) -h^0(F(1)) = 28, 
\]
due to \eqref{eq: dimform-red}. Since a general curve in $\mathcal{G}_1$ is a smooth rational quintic, a general element of $\calr(0, 4, 8)_{1, 0}$ is unobstructed due to \cite[Proposition 1.4]{GM-gensm}. Then we conclude that $\calr(0, 4, 8)_{1, 0}$ is reduced. 

\bigskip

The sheaves $ F \in \calr(0, 4, 8)_{1, 1}$ correspond to curves in the family $\mathcal{G}_2$, described in \ref{case: twoconics}, or in its boundary, described in \ref{case: hdivides}. A general member of $\mathcal{G}_2$ is a disjoint union of a line $L$ and an elliptic quartic $D$. These curves are parameterized by an open subset of $G(2, H^0(\op3(1)))\times G(2, H^0(\op3(2)))$, which has dimension $20$. Moreover, 
\[
h^0(\omega_C(2)) = h^0(\omega_D(2))+h^0(\omega_L(2)) = 9, 
\]
therefore, we conclude that $\calr(0, 4, 8)_{1, 1}$ is irreducible of dimension $28$.

\subsection{Description of \texorpdfstring{$\calr(0, 4, 8)_{2, 0}$}{{R(0, 4, 8) 2, 0}}} Consider a curve $C$ defined by a section in $H^0(F(1))$; it has degree $5$ and genus $0$, and $h^0(\sI_C(2)) = 1$. We also have that $h^1(\sI_C(2)) = h^1(F(1)) = 2$ and $h^1(\sI_C)= h^1(F(-1)) = 0$. From Lemma \ref{lem: curv5quad}, we see that $C$ is a divisor of bidegree $(1, 4)$ on a smooth quadric. Then, 
\[
0 \longrightarrow \op3(-2) \overset{\cdot Q}{\longrightarrow} \sI_C \longrightarrow \OO_Q(-1, -4) \longrightarrow 0, 
\]
and we may form the following commutative diagram {with exact rows and columns}: 
\[
\xymatrix@-2ex{ 
& 0 \ar[d] & 0 \ar[d] & \\
& \op3(-1)\ar[d]\ar@{=}[r] & \op3(-1)\ar[d] & & \\
0\ar[r] & K \ar[r]\ar[d] & F \ar[r] \ar[d] & \OO_Q(0, -3) \ar@{=}[d] \ar[r] & 0 \\
0\ar[r] & \op3(-1) \ar[r]\ar[d] & \sI_C(1) \ar[r]\ar[d] & \OO_Q(0, -3) \ar[r] & 0 \\
& 0 & 0 & 
}
\]
It follows that $K = \op3(-1)^{\oplus 2}$ and $F$ is given by an extension of $\OO_Q(0, -3)$ by $\op3(-1)^{\oplus 2}$. A general map $\OO_Q(0, -3) \to \OO_Q$ defines a curve $Y$ that is the disjoint union of $3$ lines. Then, applying $\Hom( \, - , \op3(-1)^{\oplus 2})$ to the {exact} sequence
\[
0 \longrightarrow \OO_Q(0, -3) \longrightarrow \OO_Q \longrightarrow \OO_Y \longrightarrow 0
\]
we get
\begin{multline*}
 0 \to \Ext^1(\OO_Q, \op3(-1)^{\oplus 2}) \to \Ext^1(\OO_Q(0, -3), \op3(-1)^{\oplus 2}) \to \\ \to \Ext^2(\OO_Y , \op3(-1)^{\oplus 2}) \to \Ext^2(\OO_Q, \op3(-1)^{\oplus 2}) = 0
\end{multline*}
Note that $\Ext^1(\OO_Q, \op3(-1)^{\oplus 2}) \simeq H^0(\op3(1))^{\oplus 2}$ and $\Ext^2(\OO_Y , \op3(-1)^{\oplus 2}) \simeq H^0(\omega_Y(3))^{\oplus 2}$, hence $\extd^1(\OO_Q(0, -3), \op3(-1)^{\oplus 2}) = 20$. Since the quadric $Q$ is parameterized by an open subset of $\p9$, we have that
\[
\dim \calr(0, 4, 8)_{2, 0} = 9+19 = 28.
\]

\subsection{Description of \texorpdfstring{$\calr(0, 4, 8)_{2, 1}$}{R(0, 4, 8) 2, 1}} As in the previous case, $F(1)$ corresponds to a curve of genus $0$ and degree $5$ contained in a quadric. But now, $h^1( \sI_C) = h^1(F(-1)) = 1$, and Lemma \ref{lem: curv5quad} implies that $C$ is subextremal. We have that $C$ is contained in the (possibly degenerate) union of two planes $H_1\cup H_2$
\[
0 \longrightarrow \sI_Y(-1) \longrightarrow \sI_C \longrightarrow \sI_{Z/H_2}(-3) \longrightarrow 0 
\]
where $Y\subset H_1$ is a plane conic, and $Z$ has length $2$. As in the previous case, we build the {exact commutative} diagram 
\[
\xymatrix@-2ex{ 
& 0 \ar[d] & 0 \ar[d] & \\
& \op3(-1)\ar[d]\ar@{=}[r] & \op3(-1)\ar[d] & & \\
0\ar[r] & K \ar[r]\ar[d] & F \ar[r] \ar[d] & \sI_{Z/H_2}(-2) \ar@{=}[d] \ar[r] & 0 \\
0\ar[r] & \sI_Y \ar[r]\ar[d] & \sI_C(1) \ar[r]\ar[d] & \sI_{Z/H_2}(-2) \ar[r] & 0 \\
& 0 & 0 & 
}
\]
It follows that $K\in \calr(-1, 2, 4)$ and is given by a resolution of the form 
\begin{equation}\label{res: auxK}
 0 \longrightarrow \op3(-3) \longrightarrow \begin{matrix}
 \op3(-1)^{\oplus 2} \\ \oplus \\ \op3(-2)
\end{matrix} \longrightarrow K \longrightarrow 0 
\end{equation}
Applying $\Hom( \OO_Z, \, - )$ we get the injectivity of
\[
0 \longrightarrow \Ext^3(\OO_Z , \op3(-3)) \longrightarrow \Ext^3(\OO_Z , \op3(-1)^{\oplus 2} \oplus \op3(-2)) 
\]
Therefore, noting that $\Ext^2( \sI_{Z/H_2}, \op3(l) ) \cong \Ext^3(\OO_Z , \op3(l)) $ for $l \geq -3$, we apply $\Hom( \sI_{Z/H_2}(-2), \, - )$ to \eqref{res: auxK} to get {exact sequence}
\begin{multline*}
 0\to H^0( \OO_{H_2})\to H^0(\OO_{H_2}(2)^{\oplus 2} \oplus \OO_{H_2}(1)) \to \Ext^1( \sI_{Z/H_2}(-2), K) \to 0
\end{multline*}
hence $\extd^1(\sI_{Z/H_2}(-2), K) = 14$. On the other hand, $\calr(-1, 2, 4)$ has dimension $11$, cf. \cite[Theorem 9.2]{H2}. Given $K$, we have a $1$-dimensional family for $Y$. Moreover, choosing $H_2$ determines $Z = H_{2}\cap Y$. This amounts to a total of $15$ parameters. We conclude that
\[
\dim \calr(0, 4, 8)_{2, 1} = 13+15 = 28.
\]


\section{Stable reflexive sheaves with \texorpdfstring{$c_1 = 0$}{c1 = 0} and \texorpdfstring{$c_3=10$}{c3=10}}\label{sec: even-10}

Hartshorne observed in \cite[Example 5.1.3]{H4} that the spectrum $\{-2, -2, -1, 0\}$ does not occur. Therefore, the only realizable spectrum is $\{-2, -1, -1, -1\}$. Below is the cohomology table for a stable rank-$2$ reflexive sheaf $F\in \calr(0, 4, 10)$.

\begin{table}[H]
\centering
\def\arraystretch{1.5}
\begin{tabular}{|c|c|c|c|c|c|c|c|c|} \hline
 &$-2$ &$-1$ & $0$ & $1$ & 2 \\ \hline \hline
 $h^0(F(p))$ & $0$ & $0$ & $0$ & $l+1$ & $9$ \\ \hline
 $h^1(F(p))$ & $0$ & $0$ & $1$ & $l$ & $0$ \\ \hline
 $h^2(F(p))$ & $5$ & $1$ & $0$ & $0$ & $0$ \\ \hline
 $h^3(F(p))$ & $0$ & $0$ & $0$ & $0$ & $0$ \\ \hline
\end{tabular}
\caption{Cohomology table for stable rank-$2$ reflexive sheaf $F\in \calr(0, 4, 10)$ and spectrum $\{-2, -1, -1, -1\}$; $l\coloneqq h^1 (F(1))\le1$.}
 \label{tab: cohomology 0101 gen}
\end{table}

Due to Lemma \ref{lem: h0F1>0}, we have that $h^0(F(1)) = h^1(F(1)) + 1 \leq 2$, thus $h^1(F(1)) \leq 1$. With this fact in mind, we set up the notation
\[ 
\calr(0, 4, 10)_{l} \coloneqq \{ F \in\calr(0, 4, 10) \mid h^1(F(1))=l \}, 
\]
so that $\calr(0, 4, 10)=\calr(0, 4, 10)_0 \sqcup \calr(0, 4, 10)_1$. In this section, we prove the following statement by analyzing $\calr(0, 4, 10)_0$ and $\calr(0, 4, 10)_1$ individually.

\begin{theorem}\label{thm: 0410}
$\calr(0, 4, 10)$ is a smooth, irreducible, unirational quasi-projective variety of dimension 29. In addition, $\calr(0, 4, 10)_0$ is an open subset, while $\calr(0, 4, 10)_1$ has codimension 2.
\end{theorem}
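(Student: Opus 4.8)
The plan is to prove smoothness uniformly over the whole of $\calr(0,4,10)$ first, and only afterwards sort out the irreducible and unirational structure via Serre's correspondence, using the decomposition $\calr(0,4,10)=\calr(0,4,10)_0\sqcup\calr(0,4,10)_1$ already fixed above. The openness of $\calr(0,4,10)_0$ is immediate from the upper semicontinuity of $l=h^1(F(1))$ on flat families. The key uniform input is unobstructedness: reading Table \ref{tab: cohomology 0101 gen} one checks that \emph{every} $F\in\calr(0,4,10)$ is $3$-regular and satisfies $h^0(F)=h^1(F(-1))=0$, so Lemma \ref{lem: ext2=0} with $k=3$ gives $\Ext^2(F,F)=0$ for all such $F$, independently of $l$. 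Consequently $\calr(0,4,10)$ is smooth at every point, of local dimension $\extd^1(F,F)=29$ (the expected value $8c_2-3$, since $\extd^1-\extd^2=8c_2-3$, cf. \cite[Theorem 8.2]{H2}); hence it is smooth of pure dimension $29$, and its irreducible components coincide with its connected components.

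Next I would describe $\calr(0,4,10)_0$ through Serre's correspondence. A section of $F(1)$ vanishes on a curve $C$ of degree $5$ and genus $1$ by \eqref{eq: twist1-deg+g}, and the sequence \eqref{eq: twist} (with $k=1$, $c_1=0$) gives $h^0(F(1))=1+h^0(\sI_C(2))$ and $h^1(F(1))=h^1(\sI_C(2))$; since $\chi(\sI_C(2))=0$, membership in $\calr(0,4,10)_0$ is exactly the condition $h^0(\sI_C(2))=0$, i.e. $C$ lies on no quadric, the generic such curve being a smooth elliptic quintic. I would then use that the family $\calc_0$ of all degree-$5$, genus-$1$ curves with $h^0(\sI_C(2))=0$ is integral and unirational of dimension $20$, with $h^0(\omega_C(2))=10$ and $h^0(\sI_C(1))=0$. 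Applying Proposition \ref{prop: Serre-families} and the dimension formula \eqref{eq: dimform-red}, one obtains an integral, unirational family $\calf_0=\overline{\Psi(\mathcal U)}\subseteq\calr(0,4,10)$ of dimension $20+10-1=29$, with $\calr(0,4,10)_0\subseteq\Psi(\mathcal U)\subseteq\calf_0$ because every $F\in\calr(0,4,10)_0$ recovers such a curve from its unique section of $F(1)$.

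For $\calr(0,4,10)_1$ I would again pass to curves, now with $h^0(F(1))=2$, so that the associated quintic $C$ satisfies $h^0(\sI_C(2))=1$ and lies on a unique quadric. Lemma \ref{lem: curv5quad} classifies the relevant degenerate quintics (subextremal curves and the like); a dimension count identifies them as a family $\calc_1$ of dimension $19$, and since $h^0(\omega_C(2))=10$ while $h^0(F(1))=2$, the formula \eqref{eq: dimform-red} yields $\dim\calr(0,4,10)_1=19+10-2=27$.

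Finally the pieces assemble. As $\calr(0,4,10)$ is smooth of pure dimension $29$ and $\calr(0,4,10)_1$ is a closed subset of dimension $27$, it contains no component; hence $\calr(0,4,10)_0$ is open and dense, so $\calr(0,4,10)=\overline{\calr(0,4,10)_0}\subseteq\calf_0\subseteq\calr(0,4,10)$ forces $\calr(0,4,10)=\calf_0$. Therefore $\calr(0,4,10)$ is integral, smooth and unirational of dimension $29$, with $\calr(0,4,10)_0$ open and $\calr(0,4,10)_1$ of codimension $2$. The homological bookkeeping (smoothness and the expected dimension) is the easy part; I expect the two genuine obstacles to be geometric: proving that the degree-$5$, genus-$1$ curves not lying on a quadric form a single integral, unirational family of dimension $20$ — equivalently that every such curve is a degeneration of a smooth elliptic quintic, which is what secures both irreducibility and unirationality of $\calr(0,4,10)_0$ — and carrying out the precise classification and dimension count of the special quintics on a quadric needed to certify $\dim\calr(0,4,10)_1=27$ exactly.
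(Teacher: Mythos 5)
Your overall architecture matches the paper's: the uniform smoothness argument (every $F\in\calr(0,4,10)$ is $3$-regular with $h^0(F)=h^1(F(-1))=0$, so Lemma \ref{lem: ext2=0} with $k=3$ kills $\Ext^2(F,F)$), the treatment of $\calr(0,4,10)_1$ (the section of $F(1)$ vanishes on a subextremal quintic of genus $1$; via Lemma \ref{lem: curv5quad}, liaison to $\hilb^{3,-1}(\p3)$ and the formula \eqref{eq: dimform-red} one gets $19+10-2=27$), and the final assembly (pure dimension $29$ plus a $27$-dimensional closed stratum forces $\calr(0,4,10)_1\subset\overline{\calr(0,4,10)_0}$) are all exactly the paper's steps. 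The divergence is in $\calr(0,4,10)_0$, and there you have a genuine gap, which you yourself flag: you reduce irreducibility and unirationality of the dense stratum to the claim that the locus $\calc_0\subset\hilb^{5,1}(\p3)$ of degree-$5$, genus-$1$ curves with $h^0(\sI_C(2))=0$ is integral and unirational of dimension $20$, but you do not prove it. This is not routine: $\hilb^{5,1}(\p3)$ contains reducible and non-reduced curves, the open condition $h^0(\sI_C(2))=0$ could a priori meet several components, and $h^0(\omega_C(2))=h^0(\OO_C(-2))+10$ need not be constant on the whole locus (multiple structures can carry sections of $\OO_C(-2)$), so Proposition \ref{prop: Serre-families} cannot be applied to $\calc_0$ wholesale --- you would have to stratify by cohomology, which destroys precisely the irreducibility you need, and you also need \emph{every} $F$ in the stratum to arise from a curve inside the one family fed into the proposition.

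The paper closes this at the sheaf level, bypassing the Hilbert-scheme geometry entirely: for $F\in\calr(0,4,10)_0$ one reads off from Table \ref{tab: cohomology 0101 gen} that $h^0(F(2))=9$, takes the evaluation $\op3(-1)\oplus\op3(-2)^{\oplus 5}\twoheadrightarrow F$ built from a generator $\sigma$ of $H^0(F(1))$ and a complement of $\{x_i\sigma\}$, and shows by regularity and Chern-class bookkeeping that the kernel is forced to be $\tp3(-4)\oplus\op3(-3)$, so every such $F$ has the resolution \eqref{eq: res even-10}. Lemma \ref{lem: lift} then exhibits $\calr(0,4,10)_0$ as the image of an open subset $\mathbb{M}_0$ of the irreducible $80$-dimensional space $\Hom\big(\tp3(-4)\oplus\op3(-3),\,\op3(-1)\oplus\op3(-2)^{\oplus 5}\big)$, with $\dim\mathbb{M}_0-\dim\mathbb{G}=80-51=29=\extd^1(F,F)$, which yields irreducibility and unirationality simultaneously and also furnishes the dominant morphism $\mathbb{A}^{80}\to\calr(0,4,10)$ used for unirationality of the whole space. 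To complete your proof you must either actually establish your flagged claim about $\calc_0$ (including the degeneration of all admissible singular, reducible and non-reduced quintics to smooth elliptic ones, and the constancy hypotheses of Proposition \ref{prop: Serre-families}), or substitute the resolution argument above for your Serre-theoretic treatment of the dense stratum.
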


\begin{proof}
Notice from Table \ref{tab: cohomology 0101 gen} that every $F\in\calr(0, 4, 10)$ is 3-regular and satisfies $h^0(F)=h^1(F(-1))=0$. Therefore, we can invoke Lemma \ref{lem: ext2=0} with $k=3$ to conclude that $\Ext^2(F, F)=0$ for every $F\in\calr(0, 4, 10)$, thus showing that $\calr(0, 4, 10)$ is a smooth quasi-projective variety of dimension 29.

We will argue below that $\calr(0, 4, 10)_0$ is a 29-dimensional quasi-projective variety that admits a surjective morphism from an affine space, while $\dim\calr(0, 4, 10)_1=27$. {Since the local dimension of $\calr(0, 4, 10)$ at any closed point of is $29$, $\calr(0, 4, 10)_1$ must be contained in the closure of $\calr(0, 4, 10)_0$.}
Thus, $\calr(0, 4, 10)$ must be irreducible, and unirationality follows from the existence of a dominant morphism $\mathbb{A}^{80}\to\calr(0, 4, 10)$.
\end{proof}


\subsection{Description of \texorpdfstring{$\calr(0, 4, 10)_0$}{R(0, 4, 10) 0}}

Let $\sigma$ be a generator of $V_1\coloneqq H^0(F(1))$. Then the 9-dimensional space $H^0(F(2))$ has a basis of the form 
\[
\{x\sigma, y\sigma, z\sigma , w\sigma, \theta_1, \dots, \theta_5 \}.
\]
Setting $V_5\coloneqq \langle \theta_1, \dots, \theta_5 \rangle$, we get an epimorphism $V_1\otimes\op3(-1)\oplus V_5\otimes\op3(-2) \twoheadrightarrow F$ given by $\begin{bmatrix}
\sigma & \theta_1 & \dots & \theta_5
\end{bmatrix}$. Let $E$ be the kernel of this morphism, giving the short exact sequence
\begin{equation}\label{eq: res1}
0 \longrightarrow E \longrightarrow V_1\otimes\op3(-1)\oplus V_5\otimes\op3(-2) \longrightarrow F \longrightarrow 0.
\end{equation}
Note that $E$ is locally free, since $\inext^p(E, \op3)=0$ for $p\ge1$, with $\rk(E)=4$ and $c_1(E)=-11$. In addition, it follows from the short exact sequence above that $h^1(E(2))=0$. Hence, $E$ is $3$-regular. Since $h^1(E(3))=0$, we also obtain that $h^0(E(3))=h^0(\op3(2))+5\cdot h^0(\op3(1))-h^0(F(3))=5$. Therefore, we obtain the short exact sequence 
\[
0 \longrightarrow \op3(-4) \overset{\mu}{\longrightarrow}\op3(-3)^{\oplus 5} \overset{\eta}{\longrightarrow} E \longrightarrow 0
\]
because $\ker\eta$ must be a line bundle with $c_1(\ker\eta)=-15-c_1(E)=-4$. Since $E$ is locally free, the entries of $\mu$ must generate the irrelevant ideal. {Up to acting with $\gl(5,\kappa)$, $\mu$ is given by multiplication with the vector $\begin{bmatrix} x_0 & x_1 &x_2 &  & x_3 & 0 \end{bmatrix}$}. 
Therefore, $E = \tp3(-4) \oplus \op3(-3)$ and every $F\in\calr(0, 4, 10)_0$ admits a locally free resolution of the following form
\begin{equation}\label{eq: res even-10}
0 \longrightarrow \tp3(-4) \oplus \op3(-3) \overset{\varphi}{\longrightarrow} \op3(-1)\oplus \op3(-2)^{\oplus 5} \longrightarrow F \longrightarrow 0.
\end{equation}
{Applying Lemma \ref{lem: lift}, we see that} the image of the induced modular morphism $\Psi\colon \mathbb{M}_0\to\calr(0, 4, 10)$ is isomorphic to $\calr(0, 4, 10)_{0}$. Moreover, notice that: 
\[ 
\dim\mathbb{M}_0 - \dim \mathbb{G} = 80 - 51 = 29 = \extd^1(F_\varphi, F_\varphi) ~~, ~~ \forall \varphi\in\mathbb{M}_0 .
\]

\begin{remark}
Note that the unique (up to scalar multiplication) nontrivial section in $H^0(F(1))$ vanishes along a connected curve of degree $5$ and genus $1$ not contained in a quadric. Generically, this is a smooth elliptic quintic.
\end{remark}


\subsection{Description of \texorpdfstring{$\calr(0, 4, 10)_1$}{R(0, 4, 10) 1}} 

Let $C$ be a curve defined by a nontrivial global section of $F(1)$. Then $C$ has degree $5$ and genus $1$ and is contained in a quadric. Moreover, $C$ is not extremal ($h^0(\sI_C(2))=1$) and $h^1(\sI_C(1))= h^1(\sI_C(2)) = 1$. Hence, $C$ is subextremal and, in particular, $h^1(\sI_C(l)) = 0$ for $l\not\in \{1, 2\}$. Due to \cite[Proposition 9.9]{HS}, there exists a plane $H$ and an exact sequence
\begin{equation}\label{eq: YCZ}
0 \longrightarrow \sI_Y(-1) \overset{h}{\longrightarrow} \sI_C \longrightarrow \sI_{Z/H}(-3) \longrightarrow 0, 
\end{equation}
where $Y$ is a plane conic and $Z\subset H$ is a zero-dimensional subscheme. Moreover, $h^0(\OO_Z) = p_a(C) = 1$ hence $Z$ is a point. Note that $C$ is contained in a pair of surfaces of degrees $2$ and $4$ without common factors. Hence, $C$ is linked to a curve $\Gamma$ of degree $3$ and genus $-1$. The Hilbert Scheme $\hilb^{3, -1}(\p3)$ is smooth and irreducible of dimension $12$, cf. \cite[Théorème 4.1]{MDP2}. Then, the family of curves $C$ satisfying \eqref{eq: YCZ} fill the Hilbert Scheme of constant cohomology $H_{\gamma, \rho}$, where the postulation $\gamma$ and the Rao function $\rho$ are determined by \eqref{eq: YCZ}. Moreover, 
\begin{align*}
 \dim H_{\gamma, \rho} &= \dim\hilb^{3, -1}(\p3) + h^0(\sI_\Gamma (2)) + h^0(\sI_\Gamma (4)) - h^0(\sI_C (2)) - h^0(\sI_C (4))  \\
 & = 12 + 2 + 21 - 1 -15 = 19.
\end{align*}

From \eqref{eq: YCZ} we can also compute $h^0(\omega_C(2))$. Note that $\inext^1(\sI_{Z/H}(-3), \opn) = \OO_H(4)$, and $\inext^2(\sI_{Z/H}(-3), \opn) = \OO_Z$. Dualizing and twisting the sequence \eqref{eq: YCZ}, we get 
\[
0 \longrightarrow \OO_W(2) \longrightarrow \omega_C(2) \longrightarrow \omega_Y(3) \longrightarrow \OO_Z \longrightarrow 0,
\]
where $W$ is a plane cubic. Hence, 
\[
h^0(\omega_C(2)) = h^0(\OO_W(2)) + h^0(\OO_Y(2H' - Z)) = 6 + 4 = 10,
\]
which yields
\[
\dim \calr(0, 4, 10)_1 = 19 + 10 - h^0(F(1)) = 27.
\]


\section{Stable reflexive sheaves with \texorpdfstring{$c_1 = 0$}{c1 = 0} and \texorpdfstring{$c_3=12$}{c3=12}}\label{sec: even-12}

The possible spectra in this case imply that $h^2(F(l))=0$ for $l\ge1$, so the expression is display \eqref{eq: h0F1} becomes $\chi(F(1)) = h^0(F(1)) - h^1(F(1)) = 2$; thus, $h^0(F(1))\ge2$. On the other hand, Lemma \ref{lem: h0F1>0} implies that $h^0(F(1))=2$ when the spectrum of $F$ is $\{-2, -2, -1, -1\}$, and $h^0(F(1))=3$ when the spectrum of $F$ is $\{-3, -2, -1, 0\}$. Defining the sets
\[ 
\calr(0, 4, 12)_{l} \coloneqq \{ F \in\calr(0, 4, 12) \mid h^1(F(1))=l \}, 
\]
we have that every sheaf in $\calr(0, 4, 12)_{0}$ has spectrum equal to $\{-2, -2, -1, -1\}$, while every sheaf in $\calr(0, 4, 12)_{1}$ has spectrum equal to $\{-3, -2, -1, 0\}$. In addition, the formula in display \eqref{eq: h0F1} becomes
\[ 
\chi(F(2)) = h^0(F(2)) - h^1(F(2)) = 10, 
\]
thus $h^0(F(2))\geq 10$. One can check that every $F\in\calr(0, 4, 12)_{0}$ is 2-regular, thus, $h^1(F(2))=0$. When $F\in\calr(0, 4, 12)_{1}$, we will see that $h^1(F(2))=1$ and $h^1(F(p))=0$ for $p\ge3$. We end up with the following cohomology table.

\begin{table}[H]
 \centering
 \def\arraystretch{1.5}
 \begin{tabular}{|c|c|c|c|c|c|c|c|c|} \hline
 &$-3$ &$-2$ &$-1$ & $0$ & $1$ & 2 & $ 3$ \\ \hline \hline
 $h^0(F(p))$ & $0$ & $0$ & $0$ & $0$ & $2+l$ & $10+l$ & $26$ \\ \hline
 $h^1(F(p))$  & $0$ & $0$ & $l$ & $l$ & $l$ & $l$ & $0$ \\ \hline
 $h^2(F(p))$ & $10$ & $6$ & $2+l$ & $l$ & $0$ & $0$ & $0$ \\ \hline
 $h^3(F(p))$  & $0$ & $0$ & $0$ & $0$ & $0$ & $0$ & $0$ \\ \hline
 \end{tabular}
 \caption{Cohomology table for stable rank-$2$ reflexive sheaves $F$ in $\calr(0, 4, 12)_{l}$, with $l\in \{0,1\}$.}
 \label{tab: cohomology 0121}
\end{table}


This section establishes the following description of $\calr(0, 4, 12)$.

\begin{theorem}\label{thm: 0412}
The moduli scheme $\calr(0, 4, 12)$ has two connected components: 
\begin{itemize}
\item $\calr(0, 4, 12)_0$ is an irreducible, smooth, unirational quasi-projective variety of dimension 29; it consists of all sheaves with spectrum $\{-2, -2, -1, -1\}$;
\item {$\calr(0, 4, 12)_1$ is a non-reduced scheme of dimension 29 whose Zariski tangent spaces have dimension equal to 30}; $\left(\calr(0, 4, 12)_1\right)_{\rm red}$ is irreducible, smooth, and rational; it consists of all sheaves with spectrum $\{-3, -2, -1, 0\}$.
\end{itemize}
\end{theorem}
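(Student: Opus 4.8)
The plan is to analyze the two strata $\calr(0,4,12)_0$ and $\calr(0,4,12)_1$ separately and then to separate them as connected components using the upper semicontinuity of $h^2(F(-1))$.

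For $\calr(0,4,12)_0$, every member is $2$-regular with $h^0(F(-1))=h^1(F(-2))=0$ by Table \ref{tab: cohomology 0121}, so Lemma \ref{lem: ext2=0} with $k=2$ yields $\Ext^2(F,F)=0$; hence this locus is smooth of dimension $29$. To get a global description I would mimic the treatment of $\calr(0,4,10)_0$: using $h^0(F(1))=2$ and $h^0(F(2))=10$, the sections give a surjection $H^0(F(1))\otimes\op3(-1)\oplus V\otimes\op3(-2)\twoheadrightarrow F$ with $\dim V=2$, whose kernel $E$ is locally free of rank $2$ with Chern classes $(c_1,c_2,c_3)=(-6,9,0)$. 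Since the induced map on $H^0(\,\cdot\,(2))$ is a surjection between spaces of equal dimension, $h^0(E(2))=h^1(E(2))=0$, and a short computation gives $h^0(E(3))=2$ with $E(3)$ of vanishing $c_1,c_2$; thus $E(3)\cong\op3^{\oplus 2}$ and $E\cong\op3(-3)^{\oplus 2}$. Applying Lemma \ref{lem: lift} to $0\to\op3(-3)^{\oplus 2}\to\op3(-1)^{\oplus 2}\oplus\op3(-2)^{\oplus 2}\to F\to 0$ (the hypotheses $\Hom(L,E)=\Ext^1(E,L)=0$ hold trivially for these line bundles), a parameter count gives $\dim\mathbb{M}_0-\dim\mathbb{G}=29=\extd^1(F,F)$, so $\mathbb{M}_0$ dominates an irreducible component; since every member of the stratum has such a resolution, $\calr(0,4,12)_0$ is exactly this component, smooth, irreducible and unirational of dimension $29$.

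For $\calr(0,4,12)_1$, Lemma \ref{lem: h0F1>0}(i) shows that a section of $F(1)$ vanishes on an extremal quintic $C$, of arithmetic genus $p_a(C)=2$ by \eqref{eq: twist1-deg+g}. Such a curve is non-ACM with a planar quartic subcurve, and the relation $p_a(C)=3-h^0(\OO_Z)$ from Section \ref{sec: extremal} forces $h^0(\OO_Z)=1$; the generic such $C$ is a disjoint union $Y\sqcup L$ of a plane quartic and a line, not contained in any hyperplane, so the family $\calc$ of these curves has dimension $3+14+4=21$ and satisfies the hypotheses of Proposition \ref{prop: Serre-families}. Dualizing the extremal sequence gives $h^0(\omega_C(2))=11$, whence \eqref{eq: dimform-red} produces an integral, rational family $\calf$ of sheaves of dimension $21+11-3=29$; as every $F\in\calr(0,4,12)_1$ arises this way, $\calf=(\calr(0,4,12)_1)_{\mathrm{red}}$. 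The crucial point is that Proposition \ref{prop: extremal-ext}, applied with $c_1=0$ and $d=5$ (and with $\xi|_Z\neq 0$, automatic since $h^0(\OO_Z)=1>0=-c_1$), gives $\extd^2(F,F)=1$ for every such $F$. Because $\chi(F,F)=\homd(F,F)-\extd^1(F,F)+\extd^2(F,F)-\extd^3(F,F)$ is topological, with $\homd(F,F)=1$ and $\extd^3(F,F)=0$ (stability and $\mu(F)=0$), and the quantity $\extd^1-\extd^2=29$ is the expected dimension already seen on $\calr(0,4,12)_0$, we deduce $\extd^1(F,F)=30$. Thus the Zariski tangent space to the moduli scheme has dimension $30$ at every point of $\calr(0,4,12)_1$, while $(\calr(0,4,12)_1)_{\mathrm{red}}=\calf$ has dimension $29$; hence $\calr(0,4,12)_1$ is everywhere non-reduced.

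The main obstacle I anticipate is proving that the reduction $\calf$ is genuinely smooth of dimension $29$: since $\extd^2\neq 0$, the moduli scheme is not smooth here, so smoothness of the reduced structure cannot be read off from deformation theory and must be established either from the explicit rational parameterization of $\calc$ through the construction in Proposition \ref{prop: Serre-families}, or by invoking Chang's description \cite[Theorem 10]{Ch3}. Once this is in place, the comparison $\dim T_F=30>29=\dim T_F\calf$ confirms non-reducedness, and the two strata are distinct connected components: from property S2, $h^2(F(-1))$ equals $2$ on $\calr(0,4,12)_0$ and $3$ on $\calr(0,4,12)_1$, so upper semicontinuity makes $\calr(0,4,12)_1$ closed and $\calr(0,4,12)_0$ open; as both are irreducible of dimension $29$ and disjoint, neither lies in the closure of the other, and they are exactly the connected components of $\calr(0,4,12)$.
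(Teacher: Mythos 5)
Your proposal follows the paper's route almost step for step: for $\calr(0,4,12)_0$ the same resolution $0\to\op3(-3)^{\oplus2}\to\op3(-2)^{\oplus2}\oplus\op3(-1)^{\oplus2}\to F\to0$ as in \eqref{c3=12 res}, with Lemma \ref{lem: ext2=0} giving $\extd^2(F,F)=0$ and Lemma \ref{lem: lift} giving the smooth unirational component; for $\calr(0,4,12)_1$ the same use of Proposition \ref{prop: extremal-ext} (with $\xi|_Z\neq0$ forced when $c_1=0$) together with the constancy of $\chi(F,F)$ to get $\extd^1(F,F)=30$, and the same reliance on Chang \cite[Theorem 10]{Ch3} for smoothness, irreducibility, and rationality of the reduction. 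Your one real variation — the Serre-correspondence count $\dim\calf=21+11-3=29$ over the family of extremal quintics via Proposition \ref{prop: Serre-families}, in place of the paper's identification of every $F\in\calr(0,4,12)_1$ with Chang's family through diagram \eqref{diag1} — is sound and consistent, but as you concede it does not replace Chang's result, so the logical dependencies are identical. One local gap: you assume $\dim V=2$, i.e., that the eight sections $x_j\sigma_i$ are linearly independent in $H^0(F(2))$. This is not automatic; the paper proves it by twisting $F$ with the ideal sequence of the line $V(l_1,l_2)$, using the vanishing of $\intor_1^{\op3}(F,\sI_L(2))$ and $h^0(F)=0$. Without it, the kernel need not be $\op3(-3)^{\oplus2}$ and your whole description of $\calr(0,4,12)_0$ collapses.

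The more serious defect is your final separation step. From ``both are irreducible of dimension $29$ and disjoint'' it does \emph{not} follow that neither lies in the closure of the other: semicontinuity makes $\calr(0,4,12)_1$ closed and $\calr(0,4,12)_0$ open, which does yield two distinct irreducible components (note $\calr_1\not\subset\overline{\calr_0}$, since otherwise $\calr_1$ would sit inside the at most $28$-dimensional boundary $\overline{\calr_0}\setminus\calr_0$), but it does not rule out $\overline{\calr_0}\cap\calr_1\neq\emptyset$. Compare $V(xy)\subset\mathbb{A}^2$: the strata $V(x)$ and $V(y)\setminus\{0\}$ are disjoint, one closed, one open, both irreducible of the same dimension, yet the closure of the open stratum meets the closed one. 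Tangent-space counting does not settle this either — two $29$-dimensional branches, one carrying an embedded nilpotent direction, can perfectly well meet inside a $30$-dimensional Zariski tangent space, and every coarse invariant here (e.g.\ $\chi(F(-1))=2$ on both strata) is compatible with a degeneration from spectrum $\{-2,-2,-1,-1\}$ to $\{-3,-2,-1,0\}$. So the ``two connected components'' assertion requires an actual argument that no sheaf of $\calr(0,4,12)_1$ is a limit of sheaves of $\calr(0,4,12)_0$ — for instance via the structure of sheaves with an unstable plane of order $3$ coming from Chang's description — and your proposal, as written, supplies only ``two irreducible components.''
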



\subsection{Description of \texorpdfstring{$\calr(0, 4, 12)_0$}{R(0, 4, 12) 0}} 

Let $F\in\calr(0, 4, 12)_0$ and let $\sigma_1, \sigma_2$ be a basis for $H^0(F(1))$. Note that $\{ x_j\sigma_i \}_{0\leq j \leq 3; i = 1, 2} $ form a linearly independent subset of $H^0(F(2))$. Indeed, suppose that there exists a nontrivial relation $\sum a_{ij}x_j\sigma_i =0$; it can be rewritten in the form $l_1\sigma_1+ l_2\sigma_2 =0 $ for some $l_1, l_2 \in H^0(\op{3}(1))$. Let $L$ be the line given by $V(l_1, l_2)$ and consider the exact sequence
\[
\intor_1^{\op3}(F, \sI_L(2)) = 0 \longrightarrow F \longrightarrow F(1)^{\oplus 2} \longrightarrow F(2)\otimes\sI_L \longrightarrow 0.
\]
For the vanishing of $\intor_1^{\op3}(F, \sI_L(2))$ we refer to \cite[Lemma A.1 (ii)]{CJM-deg1}. Taking global sections, we see $l_1\sigma_1+ l_2\sigma_2 = 0$ if and only if there exists $\delta \in H^0(F)$ such that $(\sigma_1, \sigma_2) = (l_2\delta, -l_1\delta)$; but this is impossible since $H^0(F) = 0$. Therefore, using that $F$ is 2-regular, we have a surjective morphism $\op3(-2)^{\oplus 2} \oplus \op3(-1)^{\oplus 2} \twoheadrightarrow F$; 
{let $E$ be its kernel. Computing Chern classes one gets that $c_1(E(3)) = c_2(E(3)) = 0$, hence $E = \op3(-3)^{\oplus 2}$.}
Hence,
\begin{equation} \label{c3=12 res}
0 \longrightarrow \op3(-3)^{\oplus 2} \longrightarrow \op3(-2)^{\oplus 2} \oplus \op3(-1)^{\oplus 2}\longrightarrow F \longrightarrow 0. 
\end{equation}

Lemma \ref{lem: ext2=0} implies that $\extd^2(F, F)=0$, thus $\extd^1(F, F)=29$ and $\calr(0, 4, 10)_0$ is smooth. In addition, a straightforward computation tells us that the family of stable reflexive sheaves with a resolution as in display \eqref{c3=12 res} has dimension 29. Applying Lemma \ref{lem: lift}, we can conclude that the closure of $\calr(0, 4, 12)_0$ within $\calr(0, 4, 12)$ is an irreducible component of the latter. It is unirational because $\calr(0, 4, 12)_0$ admits a dominant morphism from an open subset of the affine space $\Hom(\op3(-3)^{\oplus 2}, \op3(-2)^{\oplus 2} \oplus \op3(-1)^{\oplus 2})\simeq\mathbb{A}^{54}$. 


\subsection{Description of \texorpdfstring{$\calr(0, 4, 12)_1$}{R(0, 4, 12) 1}}

If $h^0(F(1))= 3$ then $F(1)$ must correspond to an extremal curve $C$. Since $\deg(C)=5$ we have that $C$ must contain a planar quartic subcurve and a residual line $L$: 
\[
0 \longrightarrow \sI_L(-1) \longrightarrow \sI_C \longrightarrow \sI_{Z/H}(-4) \longrightarrow 0.
\]
Since $p_a(C) = 2$, we get that $Z$ has length one. This leads us to the following {exact} commutative diagram: 
\begin{equation}\label{diag1}
\begin{split} \xymatrix@-2ex{ 
& 0 \ar[d] & 0 \ar[d] & \\
& \op3(-1)\ar[d]\ar@{=}[r] & \op3(-1)\ar[d] & & \\
0\ar[r] & E \ar[r]\ar[d] & F \ar[r] \ar[d] & \sI_{Z/H}(-3) \ar@{=}[d] \ar[r] & 0 \\
0\ar[r] & \sI_L \ar[r]\ar[d] & \sI_C(1) \ar[r]\ar[d] & \sI_{Z/H}(-3) \ar[r] & 0 \\
& 0 & 0 & 
} \end{split}
\end{equation}
where $E$ is the kernel of the composed morphism $F\to\sI_C(1)\to\sI_{Z/H}(-3)$; the leftmost column tells us that $E$ is a stable reflexive sheaf with Chern classes $(c_1(E), c_2(E), c_3(E))=(-1, 1, 1)$. It follows that every $F$ in $\calr(0, 4, 12)_1$ fits into a short exact sequence of the form
\begin{equation}\label{eq: middle}
0 \longrightarrow E \longrightarrow F \longrightarrow \sI_{Z/H}(-3) \longrightarrow 0
\end{equation}
given by the middle row in diagram \eqref{diag1}.

The description of $F\in\calr(0, 4, 12)_1$ in display \eqref{eq: middle} matches the exact sequence in \cite[((3)), p.103]{Ch3} when $c_2(F)=4$. The main result regarding this family of sheaves is \cite[Theorem 10]{Ch3} that, in our notation, states that the reduced scheme $\left(\calr(0, 4, 12)_1\right)_{\rm red}$ is irreducible, nonsingular, and rational of dimension 29. Due to Proposition \ref{prop: extremal-ext}, $\extd^2(F, F) = 1$, hence $\extd^1(F, F)=30$, for every $F\in\calr(0, 4, 12)_1$.


\section{Examples of stable reflexive sheaves with \texorpdfstring{$c_1 = -1$}{c1 = -1} and \texorpdfstring{$c_3\leq 6$}{c3<=6}}\label{sec: odd-low}

This section provides examples of rank-$2$ stable reflexive sheaves with $c_1=-1$ and $2\le c_2\le6$ for each possible spectrum in Table \ref{tab: spec c1=-1}.

\begin{example}[$c_3 = 2$, spectrum $\{-1, -1, -1, 0\}$] Let $C$ be the disjoint union of two twisted cubics. Then $C$ has degree $6$ and genus $-1$, and a general element of $H^0(\omega_C(1))$ vanishes at one point in each component. The corresponding extension 
\[
0 \longrightarrow \op3 \longrightarrow F(2) \longrightarrow \sI_C(3) \longrightarrow 0
\]
defines a reflexive sheaf $F\in \calr(-1, 4, 2)$ such that $h^1(F(l)) = h^1(\sI_C(l+1)) = 1$ for $l=-1, 0, 1$, and $h^1(F(l)) = 0$ for $|l| \geq 2$. Therefore, the spectrum of $F$ is $\{-1, -1, -1, 0\}$. Note that a disjoint union of a rational quartic curve and a plane conic also defines a sheaf with this same spectrum. We have $h^0(F(1)) = 0$ in both cases. The difference in cohomology is that $h^1(\sI_C(3)) =0$ in the case of twisted cubics, but $h^1(\sI_C(3)) = 1$ in the second case. We can check with the code below that these examples belong to generically smooth irreducible components.

\medskip
\begin{lstlisting}[language=Macaulay2]
load "refshsetup.m2";
C1 = minors(2, random(R^3, R^{-1, -1}));
C2 = minors(2, random(R^3, R^{-1, -1}));
C = intersect(C1, C2); -- disjoint union of twisted cubics
degree C, genus C
F = Serre(C, 3); chern F
Ext^2(F, F) -- unobstructed 
---
C1' = kernel map(kk[s, t], R, {s^4, s^3*t, s*t^3, t^4});
C2' = ideal random(R^1, R^{-1, -2});
C' = intersect(C1', C2'); -- plane conic + rat'l quartic
degree C', genus C'
F' = Serre(C', 3);
Ext^2(F', F') -- unobstructed
\end{lstlisting}
\medskip

\begin{table}[H]
 \centering
 \def\arraystretch{1.5}
 \begin{tabular}{|c|c|c|c|c|c|c|c|c|} \hline
 & $-2$ &$-1$ & $0$ & $1$ & 2 & $3$ \\ \hline \hline
 $h^0(F(p))$ & $0$ & $0$ & $0$ & $0$ & $l+1$ & $13$ \\ \hline
 $h^1(F(p))$ & $0$ & $1$ & $4$ & $4$ & $l$ & $0$ \\ \hline
 $h^2(F(p))$ & $3$ & $0$ & $0$ & $0$ & $0$ & $0$ \\ \hline
 $h^3(F(p))$ & $0$ & $0$ & $0$ & $0$ & $0$ & $0$ \\ \hline
 \end{tabular}
 \caption{Cohomology table for $F\in \calr(-1,4,2)$ with spectrum $\{-1, -1, -1, 0\}$ as in the example. $l\leq 1$. }
 \label{tab: cohomology 121}
\end{table}

\end{example}

\begin{example}[$c_3 = 2$, spectrum $\{-2, -1, 0, 0\}$]
Consider $Y$ a plane conic, and let $p, q, r \in Y$ be $3$ general points. Then let $C$ be the double conic given by
\[
0\longrightarrow \sI_C \longrightarrow \sI_Y \longrightarrow \OO_Y(p+q+r) \longrightarrow 0
\]
so that $p_a(C) = p_a(Y)-\chi(\OO_Y(p+q+r)) = - 4$. Dualizing this sequence, we get
\[
0\longrightarrow \omega_Y \longrightarrow \omega_C \longrightarrow \inext^2(\OO_Y(p+q+r), \op3(-4)) \simeq \omega_Y(-p-q-r)\longrightarrow 0
\]
Then $h^0(\omega_C(3)) = 7$ and, for a general $\xi \in H^0(\omega_C(3))$, the restriction $\xi|_Y$ vanishes at only one point. Therefore, $(C, \xi)$ corresponds to a reflexive sheaf $F(1)$ such that $(c_1(F) , c_2(F), c_3(F)) = (-1, 4, 2)$. Note that $h^1(F(-1)) = h^1(\sI_C(-1)) = 2$, thus the spectrum of $F$ must be $\{-2, -1, 0, 0\}$. With the code below, one can produce an unobstructed example. Hence, these sheaves lie on a generically smooth irreducible component of $\calr(-1,4,2)$ of expected dimension $27$.

\begin{lstlisting}[language=Macaulay2]
load "refshsetup.m2";
q = x_0^2-x_1*x_2;
a = x_3^3*x_0+x_0^3*x_1+x_2^3*x_3;
b = (x_2^2+x_1^2)*x_0+x_1^2*x_1+x_0^2*x_3;
C = topComponents ideal( x_3^2, x_3*q, q^2, x_3*a-q*b);
degree C, genus C
F = Serre(C, 1); chern F
Ext^2(F, F) -- unobstructed
\end{lstlisting}

\begin{table}[H]
 \centering
 \def\arraystretch{1.5}
 \begin{tabular}{|c|c|c|c|c|c|c|c|c|} \hline
 &$-3$ &$-2$ &$-1$ & $0$ & $1$ & 2 & $3$ & 4\\ \hline \hline
 $h^0(F(p))$ & $0$ & $0$ & $0$ & $0$ & $1$ & $5$ & $15$ & $34$ \\ \hline
 $h^1(F(p))$ & $0$ & $0$ & $2$ & $4$ & $5$ & $4$ & $2$ & $0$\\ \hline
 $h^2(F(p))$ & $6$ & $3$ & $1$ & $0$ & $0$ & $0$ & $0$ & $0$\\ \hline
 $h^3(F(p))$ & $0$ & $0$ & $0$ & $0$ & $0$ & $0$ & $0$ & $0$\\ \hline
 \end{tabular}
 \caption{Cohomology table for $F\in \calr(-1,4,2)$ with spectrum $\{-2, -1, 0, 0\}$ as in the example. }
 \label{tab: cohomology 122}
\end{table}

\end{example}

\begin{example}[$c_3 = 4$, spectrum $\{-1, -1, -1, -1\}$]
Let $C$ be a smooth rational sextic curve; this case was suggested in \cite[Example 4.2.4]{H2}. Since $\omega_C(1) \cong \OO_{\p1}(4)$, $C$ corresponds to a sheaf $F(2)$ such that $(c_1(F) , c_2(F), c_3(F)) = (-1, 4, 4)$. Moreover, $h^1(F(-1)) = h^1(\sI_C) = 0$ since $C$ is integral. Then $F$ has spectrum $\{-1, -1, -1, -1\}$. Also note that for a general curve $C$, $h^0(\sI_C(2)) = 0$, hence $h^0(F(1)) = 0$. With the code below, we can show that, generically, $\Ext^2(F, F)= 0$, and these sheaves belong to a generically smooth irreducible component of $\calr(-1, 4, 4)$.

\begin{lstlisting}[language=Macaulay2]
load "refshsetup.m2";
S = kk[y_0..y_6];
Y = kernel map(kk[s, t], S, {t^6, t^5*s, t^4*s^2, t^3*s^3, 
 t^2*s^4, t*s^5, s^6}); 
phi = map(S, R, random(S^{1}, S^4));
C = preimage(phi, Y); degree C, genus C
F = Serre(C, 3); chern F
Ext^2(F, F) --unobstructed
\end{lstlisting}

We can also describe the cohomology table of such sheaves: 

\begin{table}[H]
 \centering
 \def\arraystretch{1.5}
 \begin{tabular}{|c|c|c|c|c|c|c|c|c|} \hline
 &$-3$ &$-2$ &$-1$ & $0$ & $1$ & 2 & $3$ \\ \hline \hline
 $h^0(F(p))$ & $0$ & $0$ & $0$ & $0$ & $0$ & $2$ & $14$ \\ \hline
 $h^1(F(p))$ & $0$ & $0$ & $0$ & $3$ & $3$ & $0$ & $0$ \\ \hline
 $h^2(F(p))$ & $7$ & $4$ & $0$ & $0$ & $0$ & $0$ & $0$ \\ \hline
 $h^3(F(p))$ & $0$ & $0$ & $0$ & $0$ & $0$ & $0$ & $0$ \\ \hline
 \end{tabular}
 \caption{Cohomology table for $F\in \calr(-1,4,4)$ with spectrum $\{-1, -1, -1, -1\}$ as in the example. }
 \label{tab: cohomology 141}
\end{table}

\end{example}

\begin{example}[$c_3 = 4$, spectrum $\{-2, -1, -1, 0\}$]
Consider $C$ a disjoint union of a plane cubic and a twisted cubic, thus a curve of degree $6$ and genus $0$. Then, a general global section of $\omega_C(1)$ vanishes along $3$ points on each irreducible component. Thus $C$ corresponds to $F(2)$ such that $(c_1(F) , c_2(F), c_3(F)) = (-1, 4, 4)$ and $h^1(F(-1)) = h^1(\sI_C) = 1$, since $C$ has $2$ smooth connected components. Note also that $h^0(F(1)) = 0$ in this case. With the code below, we show that $\Ext^2(F, F)= 0 $ generically, and these sheaves belong to another generically smooth irreducible component of $\calr(-1, 4, 4)$.

\begin{lstlisting}[language=Macaulay2]
load "refshsetup.m2"
C1 = minors(2,matrix{{x_0,x_1,x_2},{x_1,x_2,x_3}});
C2 = ideal(x_0, random(3,R));
C = intersect(C1,C2); degree C, genus C
F = Serre(C,3); chern F
Ext^2(F,F) -- unobstructed
\end{lstlisting}

The cohomology table for these sheaves is as follows: 

\begin{table}[H]
 \centering
 \def\arraystretch{1.5}
 \begin{tabular}{|c|c|c|c|c|c|c|c|c|} \hline
 &$-3$ &$-2$ &$-1$ & $0$ & $1$ & 2 & $3$ \\ \hline \hline
 $h^0(F(p))$ & $0$ & $0$ & $0$ & $0$ & $0$ & $4$ & $14$ \\ \hline
 $h^1(F(p))$ & $0$ & $0$ & $1$ & $3$ & $3$ & $2$ & $0$ \\ \hline
 $h^2(F(p))$ & $7$ & $4$ & $1$ & $0$ & $0$ & $0$ & $0$ \\ \hline
 $h^3(F(p))$ & $0$ & $0$ & $0$ & $0$ & $0$ & $0$ & $0$ \\ \hline
 \end{tabular}
 \caption{Cohomology table for $F\in \calr(-1,4,4)$ with spectrum $\{-2, -1, -1, 0\}$ as in the example. }
 \label{tab: cohomology 142}
\end{table}

\end{example}

\begin{example}[$c_3 = 6$, spectrum $\{-2, -1, -1, -1\}$]
Let $Y$ be a disjoint union of $3$ lines, and let $C$ be the residual curve to the intersection of two general elements of $H^0(\sI_Y(3))$. Then $C$ is a smooth curve of degree $6$ and genus $1$, due to \ref{P1}, and 
\[
h^1(\sI_C(l)) = h^1(\sI_Y(2-l)) =\begin{cases}
2, & l=1, 2 \\ 0, & l\neq 1, 2 
\end{cases}.
\]
Then, by \ref{P3}, 
\[
h^0(\omega_C(1)) = h^1(\OO_C(-1)) = h^0(\sI_Y(3)) - h^0(\sI_X(3)) = 6, 
\]
where $X$ is the complete intersection linking $C$ and $Y$. Since $C$ is smooth, it corresponds to a sheaf $F(2)$ such that $(c_1(F) , c_2(F), c_3(F)) = (-1, 4, 6)$ and $h^0(F(1)) = h^1(F(-1)) = 0$. Hence, $F$ has spectrum $\{-2, -1, -1, -1\}$. These sheaves belong to a generically smooth irreducible component of $\calr(-1, 4, 6)$, as one can check with the code below.

\begin{lstlisting}[language=Macaulay2]
load "refshsetup.m2";
Y = dsLns 3; 
C = quotient(rand(Y, 3, 3), Y); degree C, genus C
smooth C -- check that C is smooth
F = Serre(C, 3); chern F
Ext^2(F, F) -- unobstructed
\end{lstlisting}

Also, the cohomology table is the following.

\begin{table}[H]
 \centering
 \def\arraystretch{1.5}
 \begin{tabular}{|c|c|c|c|c|c|c|c|c|} \hline
 &$-3$ &$-2$ &$-1$ & $0$ & $1$ & 2 & $3$ \\ \hline \hline
 $h^0(F(p))$ & $0$ & $0$ & $0$ & $0$ & $0$ & $3$ & $15$ \\ \hline
 $h^1(F(p))$ & $0$ & $0$ & $0$ & $2$ & $2$ & $0$ & $0$ \\ \hline
 $h^2(F(p))$ & $7$ & $5$ & $1$ & $0$ & $0$ & $0$ & $0$ \\ \hline
 $h^3(F(p))$ & $0$ & $0$ & $0$ & $0$ & $0$ & $0$ & $0$ \\ \hline
 \end{tabular}
 \caption{Cohomology table for $F\in \calr(-1,4,6)$ with spectrum $\{-2, -1, -1, -1\}$ as in the example. }
 \label{tab: cohomology 161}
\end{table}
    
\end{example}

\begin{example}[$c_3 =6$, spectrum $\{-2, -2, -1, 0\}$] Let $C$ be the disjoint union of two plane cubics, thus a curve of degree $6$ and genus $1$. Then a general global section of $\omega_C(1)$ has isolated zeros and $C$ corresponds to a reflexive sheaf $F(2)$ with Chern classes $(c_1(F) , c_2(F), c_3(F)) = (-1, 4, 6)$. In this case, $h^1(F(-1)) = h^1(\sI_C) = 1$. Also note that $h^0(F(1)) = 1$, but a section of $F(1)$ defines a curve of degree $4$ and genus $-2$ that would be harder to describe. With the code below, we check that these sheaves are generically unobstructed in $\calr(-1, 4, 6)$

\begin{lstlisting}[language=Macaulay2]
load "refshsetup.m2";
C = intersect(ideal(x_0, random(3, R)), ideal(x_1, random(3, R)));
degree C, genus C
F = Serre(C, 3); chern F
Ext^2(F, F) -- unobstructed
\end{lstlisting}

The cohomology table for these examples is as follows: 

\begin{table}[H]
 \centering
 \def\arraystretch{1.5}
 \begin{tabular}{|c|c|c|c|c|c|c|c|c|} \hline
 &$-3$ &$-2$ &$-1$ & $0$ & $1$ & 2 & $ 3$ & $4$\\ \hline \hline
 $h^0(F(p))$ & $0$ & $0$ & $0$ & $0$ & $1$ & $5$ & $16$ & $36$ \\ \hline
 $h^1(F(p))$ & $0$ & $0$ & $1$ & $2$ & $3$ & $2$ & $1$ & $0$\\ \hline
 $h^2(F(p))$ & $7$ & $5$ & $2$ & $0$ & $0$ & $0$ & $0$ & $0$ \\ \hline
 $h^3(F(p))$ & $0$ & $0$ & $0$ & $0$ & $0$ & $0$ & $0$ & $0$\\ \hline
 \end{tabular}
 \caption{Cohomology table for $F\in \calr(-1,4,6)$ with spectrum $\{-2, -2, -1, 0\}$ as in the example.}
 \label{tab: cohomology 162}
\end{table}

\end{example}

\section{Stable reflexive sheaves with \texorpdfstring{$c_1 = -1$}{c1 = -1} and \texorpdfstring{$c_3=8$}{c3=8}}\label{sec: odd-8}

To study $\calr(-1, 4, 8)$, we first observe that $h^0(F(1))\le1$, due to Lemma \ref{lem: h0F1>0}, and that $h^1(F(-1)) \leq 1$, due to the possible spectra in Table \ref{tab: spec c1=-1}. We then define the subsets
\[
\calr(-1, 4, 8)_{l, m} \coloneqq \left\{ F \in\calr(-1, 4, 8) \mid h^0(F(1))=l, \, h^1(F(-1)) = m \right\}, 
\]
where $l,m\in\{0,1\}$. Note that for $m=0$ the spectrum is $\{-2,-2,-1,-1\}$ and for $m=1$ it is $\{-3,-2,-1,0\}$.

\begin{theorem}\label{thm: -148}
The moduli scheme $(\calr(-1, 4, 8))_{\rm red}$ is the closure of $\calr(-1,4,8)_{0,0}$, hence, it is reduced and irreducible of expected dimension $27$. Moreover, 
\begin{itemize}
 \item $\calr(-1,4,8)_{0,0}$ is smooth, $\extd^2(F,F) = 0$ for every $F \in \calr(-1,4,8)_{0,0}$ ;
 \item $\calr(-1,4,8)_{0,1} = \emptyset$;
 \item $\calr(-1, 4, 8)_{1, 1}$ is irreducible of dimension $26$, and $\extd^2(F,F) = 1$ for any $F\in \calr(-1, 4, 8)_{1, 1}$;
 \item $\calr(-1, 4, 8)_{1, 0} = \mathcal{G}_1 \cup \mathcal{G}_2$, both components have dimension $25$; and also $\extd^2(F,F) = 0$ for every $F\in \mathcal{G}_2$, or a generic $F\in \mathcal{G}_1$.
\end{itemize}
\end{theorem}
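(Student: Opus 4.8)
The plan is to stratify $\calr(-1,4,8)$ by the pair $(l,m)=\big(h^0(F(1)),h^1(F(-1))\big)$ as in the statement and to treat each piece through the Serre correspondence of Section~\ref{sec: serre}. First I would fix the cohomology: the formulas of Lemma~\ref{lem: HRR} together with Lemma~\ref{lem: h2} give $h^2(F(2))=h^2(F(1))=0$ and $\chi(F(1))=h^0(F(1))-h^1(F(1))=-1$, so $h^1(F(1))=l+1$, while Lemma~\ref{lem: h0F1>0}(ii) forces $l\le 1$; the spectrum is $\{-2,-2,-1,-1\}$ when $m=0$ and $\{-3,-2,-1,0\}$ when $m=1$. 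Note the contrast with the case $c_1=0$: here $l=0$ does \emph{not} make $F$ $2$-regular (as $h^1(F(1))=1$), so one must pass to the degree-six curve. When $l=0$, by \eqref{eq: twist2-deg+g} the sheaf $F(2)$ corresponds to a curve $C$ of degree $6$ and genus $2$ with $h^0(\sI_C(2))=0$, and twisting \eqref{eq: twist} gives $0\to\op3(-3)\to F(-1)\to\sI_C\to0$, whence $m=h^1(F(-1))=h^1(\sI_C)=h^0(\OO_C)-1$; that is, $m$ counts the connected components of $C$ minus one. When $l=1$, by \eqref{eq: twist1-deg+g} the sheaf $F(1)$ corresponds to a curve of degree $4$ and genus $-1$, and $m=h^1(\sI_C(-1))$.

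For the main stratum the sextic $C$ is connected; a general such curve is smooth and irreducible, and its Hilbert scheme component has the expected dimension $4\cdot 6=24$. I would establish the cohomology table of $\calr(-1,4,8)_{0,0}$, in particular $h^1(F(2))=0$, so that $F$ is $3$-regular with $h^0(F)=h^1(F(-1))=0$; Lemma~\ref{lem: ext2=0} (with $k=3$) then yields $\extd^2(F,F)=0$ and smoothness, and \eqref{eq: dimform-red} gives $\dim=24+h^0(\omega_C(1))-h^0(F(2))=24+7-4=27$. Thus $\mathcal M\coloneqq\overline{\calr(-1,4,8)_{0,0}}$ is a reduced, irreducible $27$-dimensional component. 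To prove $\calr(-1,4,8)_{0,1}=\emptyset$ I would use that $m=1$ forces $C$ to have exactly two connected components with $\deg(C)=6$, $g_1+g_2=3$ and $h^0(\sI_C(2))=0$; running through the admissible splittings $(d_i,g_i)$, the curve either contains a plane quartic—which lies on a reducible quadric together with the remaining planar degree-two component, contradicting $h^0(\sI_C(2))=0$—or has a line (or other low-degree) component $C_i$ on which $\omega_C(1)|_{C_i}$ has negative degree, forcing $\xi|_{C_i}=0$ for every $\xi\in H^0(\omega_C(1))$, so the associated sheaf is not reflexive. Hence no admissible curve exists.

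For $l=1$ the quartics of degree $4$ and genus $-1$ are classified by Nollet--Schlesinger \cite{NS-deg4} into $\calh_1$ (extremal), $\calh_2$ (two conics) and $\calh_3$ (twisted cubic plus line). Computing the Rao module in each case gives $h^1(\sI_C(-1))=0$ for $\calh_2,\calh_3$ and $h^1(\sI_C(-1))=1$ for the extremal curves, so $\calr(-1,4,8)_{1,0}=\mathcal G_1\cup\mathcal G_2$ is the union of the Serre images of $\calh_3$ and $\calh_2$, while $\calr(-1,4,8)_{1,1}$ is the Serre image of $\calh_1$. For the non-extremal families \eqref{eq: dimform-red} gives $\dim\mathcal G_i=\dim\calh_i+h^0(\omega_C(3))-1=16+10-1=25$; a uniform $3$-regularity argument and Lemma~\ref{lem: ext2=0} give $\extd^2(F,F)=0$ on all of $\mathcal G_2$, while smoothness of the general quartic and \cite[Proposition 1.4]{GM-gensm} give generic unobstructedness of $\mathcal G_1$. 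For the extremal stratum the curves are non-ACM and fit into \eqref{seq: extremal} with $h^0(\OO_Z)=2$, so Proposition~\ref{prop: extremal-ext} and \eqref{eq: ext2-extremal} yield $\extd^2(F,F)=h^0(\OO_Z)-1=1$ for \emph{every} $F\in\calr(-1,4,8)_{1,1}$; its dimension $26$ follows either from \eqref{eq: dimform-red} or from the extension $0\to E\to F\to\sI_{Z/H}(-3)\to0$ of \eqref{eq: diag-extremal}, counting $\dim\{E\}+\dim\{H,Z\}+\extd^1(\sI_{Z/H}(-3),E)$.

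Finally I would assemble the global picture. Since $h^0(F(1))$ and $h^1(F(-1))$ are upper semicontinuous, $\calr(-1,4,8)_{0,0}$ is open and $\mathcal M$ is a $27$-dimensional component. At a general point of each of $\mathcal G_1,\mathcal G_2$ one has $\extd^2(F,F)=0$, so $\calr(-1,4,8)$ is smooth of dimension $27$ there; as these strata have dimension $25<27$, the smooth neighborhood meets the open stratum, and taking closures gives $\mathcal G_1,\mathcal G_2\subset\mathcal M$. The delicate point—and the main obstacle—is to show $\calr(-1,4,8)_{1,1}\subset\mathcal M$: here $\extd^1(F,F)=28>27$, so the space is singular (and possibly non-reduced) along this codimension-one locus and one cannot argue by smoothness. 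I expect to resolve this by a deformation argument showing that the length-one obstruction is genuinely nonzero, so that the local ring at $F$ is $27$-dimensional with $F$ in the closure of the unobstructed locus; equivalently, that the two-component sextic attached to $F$ by a general section of $F(2)$ is a flat limit of smooth connected sextics in $\hilb^{6,2}(\p3)$. Granting this, $(\calr(-1,4,8))_{\rm red}=\mathcal M$ is reduced and irreducible of dimension $27$, and the four listed properties of the strata follow.
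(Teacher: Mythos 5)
Your stratification by $(l,m)$ and most of your numerics track the paper, but two of your four bullets rest on steps that fail. First, in $\calr(-1,4,8)_{1,0}$ you have the two families swapped. For a sheaf arising from a subextremal quartic (two conics; your $\mathcal{G}_2$) one has $h^1(F(2))=h^1(\sI_C(2))=1$, so $F$ is \emph{not} $3$-regular, and Lemma \ref{lem: ext2=0} cannot be applied with $k=4$ either, since $h^0(F(k-3))=h^0(F(1))=1\neq 0$; your ``uniform $3$-regularity argument'' therefore proves nothing on that family. The regularity argument works precisely on the other family (twisted cubic plus line), whose Rao module $R/(x_0,x_1,x_2^2,x_2x_3,x_3^2)$ lives in degrees $0,1$, giving $h^1(F(-1))=h^1(F(2))=0$ and hence $\extd^2(F,F)=0$ for \emph{every} such $F$ by Lemma \ref{lem: ext2=0}. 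On the two-conics family the paper only obtains \emph{generic} vanishing, by reducing as in Section \ref{sec: extremal} to $\Ext^2(F,F)\cong \Ext^2(F,\sI_{Z/H}(-2))$ and checking that a generic extension class does not vanish on $Z$; the authors explicitly remark that full vanishing there is unknown. So, with your labels, you claim an open statement on one family and a strictly weaker-than-needed one on the other.

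Second, your proof that $\calr(-1,4,8)_{0,1}=\emptyset$ does not work as sketched: $m=h^1(\sI_C)=h^0(\OO_C)-1$ does not count connected components for non-reduced sextics, and in fact the curves in this stratum \emph{do exist} --- by Lemma \ref{lem: curv6cub} (using $h^0(\sI_C(3))\geq 3$) they fit into $0\to\sI_Y(-1)\to\sI_C\to\sI_{Z/H}(-4)\to 0$ with $Y$ of degree $2$ and genus $-1$, e.g.\ a plane quartic with two skew lines attached, which escapes your quadric contradiction; and since those lines meet the quartic, $\omega_C(1)$ has non-negative degree on them, so your ``negative degree on a line'' mechanism does not apply. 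What actually kills the stratum is reflexivity: dualizing yields $0\to\OO_D(2)\to\omega_C(1)\to\OO_Y\to\OO_Z\to 0$, and the image of the middle map is $\OO_{L'}\oplus\OO_L(-Z)$ (or an extension of $\OO_L(-Z)$ by $\OO_L$), so \emph{every} global section of $\omega_C(1)$ vanishes along the line $L$ through $Z$ --- no curve is excluded, but every extension is non-reflexive. Finally, the closure statement you flag as the ``main obstacle'' needs no new deformation argument: since $\extd^1(F,F)-\extd^2(F,F)=27$ for all $F$, every irreducible component of $\calr(-1,4,8)$ has dimension at least $27$ at each of its points, hence cannot lie inside the $l=1$ strata (of dimensions $25$ and $26$) and must meet the open stratum $\calr(-1,4,8)_{0,0}$. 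Irreducibility of that stratum, in turn, is not free: you need the classification of the degree-$6$ genus-$2$ curves (the paper's Lemma \ref{lem: curve-1800}, via liaison to degree-$3$ genus-$(-1)$ curves, with the exceptional resolutions shown to be flat limits by \cite{Kleppe-curves}), not an appeal to the ``expected dimension'' of a component of $\hilb^{6,2}(\p3)$; the same lemma is what justifies $h^1(F(2))=0$ and $3$-regularity on all of $\calr(-1,4,8)_{0,0}$, which you assert without proof.
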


\begin{table}[H]
 \centering
 \def\arraystretch{1.5}
 \begin{tabular}{|c|c|c|c|c|c|c|c|c|} \hline
 &$-3$ &$-2$ &$-1$ & $0$ & $1$ & 2 & $3$ & $4$ \\ \hline \hline
 $h^0(F(p))$& $0$ & $0$ & $0$ & $0$ & $l$ & $lk +m+4$ & $16$ &$37$\\ \hline
 $h^1(F(p))$ & $0$ & $0$ & $m$ & $m+1$ & $l+1$ & $lk +m$ & $m$& $0$\\ \hline
 $h^2(F(p))$ & $9$ & $6$ & $m+2$ & $m$ & $0$ & $0$ & $0$ & $0$\\ \hline
 $h^3(F(p))$& $0$ & $0$ & $0$ & $0$ & $0$ & $0$ & $0$& $0$ \\ \hline
 \end{tabular}
 \caption{Cohomology table for $F\in \calr(-1,4,8)$, where $l,m,k \leq 1$ }
 \label{tab: cohomology 181}
\end{table}

\subsection{Description of \texorpdfstring{$\calr(-1, 4, 8)_{0, 0}$}{R(-1, 4, 8) 0, 0}} We start by considering the case $h^0(F(1)) = 0$ and $h^1(F(-1)) = 0$. In particular, the spectrum of $F$ is $\{ -2, -2, -1, -1\}$. Then $F(2)$ corresponds to a curve $C$ of degree $6$ and genus $2$ such that: 
\begin{align*}
 h^0(\sI_C(2)) & = h^1(\sI_C) = h^2(\sI_C(1)) = 0,\\
 h^1(\sI_C(1)) & = h^1(\sI_C(2)) = 1, \text{ and}\\
 h^0(\sI_C(3)) & = h^0(F(2))-1 = 3 + h^1(F(2))\geq 3.
\end{align*}
Then, we can prove the following lemma.

\begin{lemma}\label{lem: curve-1800}
For a curve $C$ as above, we have a resolution of the form
\begin{equation}\label{eq: res-1800}
 0 \longrightarrow \op3(-6) \longrightarrow \begin{matrix} \op3(-5)^{\oplus 3} \\ \oplus \\ \op3(-4)^{c+1} \end{matrix} \longrightarrow \begin{matrix} \op3(-3)^{\oplus 3} \\ \oplus \\ \op3(-4)^{c+1}\end{matrix} \longrightarrow \sI_C \longrightarrow 0
\end{equation}
where $c \in \{0,1\}$ and the first map on the left is given, up to a choice of coordinates, by $\begin{bmatrix} x_0,x_1, x_2, x_3^2, 0^c\end{bmatrix}^T$. Moreover, 
\begin{enumerate}
 \item a general curve with $c=0$ is smooth and irreducible; and
 \item if $c=1$ then $C$ is the limit of curves with resolution \eqref{eq: res-1800} with $c=0$.
\end{enumerate}
\end{lemma}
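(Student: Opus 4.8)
The plan is to follow the same liaison-plus-\cite[II \S 5.1]{MDP} strategy already used above for the families $\mathcal{G}_1$ and $\mathcal{G}_2$. First I would record the full cohomology of $\sI_C$. From the stated vanishings together with Riemann--Roch one computes the Hilbert function $h^0(\sI_C(n))=0,0,0,3,12,27,\dots$ for $n\ge 0$, that $h^1(\sI_C(n))=1$ exactly for $n=1,2$ and vanishes otherwise, and that $h^2(\sI_C(n))=h^1(\OO_C(n))=0$ for $n\ge1$. Thus the Rao module $M_C=H^1_*(\sI_C)$ has length $2$, concentrated in degrees $1$ and $2$ with one-dimensional graded pieces, and the postulation character $\gamma_C$ is thereby determined.

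To pin down $M_C$ as a module (not merely its Hilbert function) I would link $C$. Two general cubics through $C$ are coprime—a common linear factor would force a quadric or a large planar subcurve through $C$, contradicting $h^0(\sI_C(2))=0$—so their complete intersection links $C$ to a curve $\Gamma$ of degree $3$ and, by \ref{P1}, arithmetic genus $-1$. By \ref{P2} the module $M_\Gamma$ has length $2$ in degrees $0,1$, while $h^0(\sI_\Gamma(2))=2$ with the two quadrics necessarily sharing a plane; hence $\Gamma$ is a plane conic together with a line (or a degeneration thereof). A Mayer--Vietoris computation then identifies $M_\Gamma$ with the indecomposable module $R/(x_0,x_1,x_2,x_3^2)$, in particular excluding the split module $\kappa(-1)\oplus\kappa(-2)$; by \ref{P4}, $M_C\cong\Ext^4_R(M_\Gamma,R(-6))\cong (R/(x_0,x_1,x_2,x_3^2))(-1)$. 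Its minimal free resolution is the Koszul complex on $(x_0,x_1,x_2,x_3^2)$ shifted by $-1$, whose top differential is exactly the column $[x_0,x_1,x_2,x_3^2]^T$.

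With $M_C$ and $\gamma_C$ in hand I would build a free resolution of $\sI_C$ by the mapping-cone recipe of \cite[II \S 5.1]{MDP}, exactly as in \ref{case: twc+line} and \ref{case: twoconics}. This produces a resolution of the stated shape, the leftmost map inheriting the Koszul differential $[x_0,x_1,x_2,x_3^2,0^c]^T$ of $M_C$, with $c\ge0$ counting the redundant (non-minimal) generator--syzygy pairs in degree $4$. To bound $c$ I would analyze the middle matrix block-wise as before: its generic rank must be $4+c$ while the block responsible for the extra syzygies has rank at most $1$, forcing $c\le1$, and $c=1$ would otherwise make the matrix drop rank along a surface containing $C$.

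Finally, for the geometric statements: the curves with the fixed pair $(\gamma_C,\rho_C)$ fill the constant-cohomology Hilbert scheme $H_{\gamma_C,\rho_C}$, which is irreducible. Since smooth, nondegenerate curves of degree $6$ and genus $2$ with $h^0(\sI_C(2))=0$ exist and realize the $c=0$ resolution, the general member of the family is smooth and irreducible, giving (1). For (2), I would invoke \cite[Theorem 4.1]{Kleppe-curves} precisely as in the two-conics case to conclude that the locus with $c=1$ lies in the closure of the locus with $c=0$. The main obstacles I anticipate are the module-theoretic step—rigorously excluding the split Rao module and identifying $M_\Gamma$—and the rank bookkeeping that yields $c\le1$; the smoothness and specialization claims are then comparatively routine.
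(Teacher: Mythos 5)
The decisive gap is your coprimality claim at the outset. You assert that two general elements of $H^0(\sI_C(3))$ are coprime because a common linear factor ``would force a quadric or a large planar subcurve through $C$, contradicting $h^0(\sI_C(2))=0$.'' But a planar subcurve of degree $3$ contradicts nothing: take $C=D\cup Y$ with $D$ a plane cubic in $H=V(h)$ and $Y$ a twisted cubic meeting $D$ in a length-two scheme. Then $\deg C=6$, $p_a(C)=2$, $h^0(\sI_C(2))=0$, and generically $H^0(\sI_C(3))=h\cdot H^0(\sI_Y(2))$ is $3$-dimensional with every element divisible by $h$, so $C$ admits no complete-intersection link by two cubics, yet it satisfies all hypotheses of the lemma. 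These are exactly the curves of case (ii)(b) of Lemma \ref{lem: curv6cub}, and they are precisely the curves with $c=1$ in \eqref{eq: res-1800}. The paper's proof accordingly splits in two: in the linked case the block-rank analysis you reproduce in fact forces $c=0$ (for $c=1$ the minors of $A_{12}$ cut out a twisted cubic with a plane-cubic residual, so such a curve is not directly linkable after all), while in the planar-subcurve case the $c=1$ resolution is built directly from the extension $0\to\sI_Y(-1)\to\sI_C\to\sI_{Z/H}(-d)\to0$, with $d=4$ excluded by the cohomological constraint $h^2(\sI_C(1))=0$ and $d=3$, $h^0(\OO_Z)=1$ forced by $p_a(C)=2$. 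Your proposal never reaches this second case.

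The downstream consequence is that your identification of $M_C$ with the nonsplit module $(R/(x_0,x_1,x_2,x_3^2))(-1)$ — and hence the shape of \eqref{eq: res-1800} and the Koszul column $\begin{bmatrix} x_0,x_1,x_2,x_3^2,0^c\end{bmatrix}^T$ — is established only for linkable curves. For the unlinkable ones you know merely that $M_C$ has Hilbert function $(1,1)$ in degrees $1,2$, and you would still have to exclude the split module $\kappa(-1)\oplus\kappa(-2)$, whose resolution via \cite[II \S 5.1]{MDP} has a different shape, before your bookkeeping applies; the paper gets this for free from the horseshoe construction in case two. Relatedly, your reading of $c$ as counting ``redundant (non-minimal) generator--syzygy pairs'' is off: for the $c=1$ curves the resolution \eqref{eq: res-1800} is minimal, not a non-minimal perturbation of the $c=0$ one. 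Two smaller unproved assertions: the irreducibility of $H_{\gamma,\rho}$, and the existence of smooth members realizing $c=0$ (also, your Mayer--Vietoris identification of $M_\Gamma$ should cover all degenerate degree-$3$, genus-$(-1)$ curves, e.g.\ multiplicity structures, not just a conic plus a line). The paper settles the geometric claims by liaison: $\hilb^{3,-1}(\p3)$ is irreducible, and linking a disjoint union of a conic and a line, for which $\sI_Y(3)$ is globally generated, produces a smooth curve by \cite[Th\'eor\`eme 5.1]{MDP2}, irreducible because $h^1(\sI_C)=0$ forces connectedness; your appeal to \cite[Theorem 4.1]{Kleppe-curves} for part (2) does match the paper.
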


\begin{proof}
Applying Lemma \ref{lem: curv6cub} we have that either 
\begin{enumerate}
 \item $C$ is directly linked to a curve of degree $3$ and genus $-1$;
 \item $C$ has a planar subcurve with an extremal residual curve of degree $2$ or $3$.
\end{enumerate}
In the first case, we note that every curve $Y$ of degree $3$ and genus $-1$ is extremal, cf. \cite[Theorem 4.1]{MDP2}. The Rao module of $Y$ is then $M_Y \cong R/(x_0, x_1, x_2, x_3^2)$, for this we only need to consider $Y$ a disjoint union of a conic $V(x_2, x_3^2 -q(x_0, x_1))$ and a line $V(x_0, x_1)$. Then the Rao module of $C$ is $M_C = \Ext^4_R(M_Y, R(-6)) = M_Y(-1)$ which has the Koszul resolution
\[
0 \to R(-6) \to 
 \begin{matrix}R(-4) \\ \oplus \\ R(-5)^{\oplus 3} \end{matrix}
\to 
\begin{matrix}R(-3)^{\oplus 3}\\ \oplus \\R(-4)^{\oplus 3} \end{matrix}
\to 
\begin{matrix}R(-2)^{\oplus 3}\\ \oplus \\R(-3)\end{matrix}
\to R(-1) \to M_C \to 0
\]
From \cite[II \S5]{MDP} we can show that $\sI_C$ has a resolution of the form 
\[
0 \longrightarrow \op3(-6) \overset{\mbox{\normalsize B}}{\longrightarrow}\begin{matrix} \op3(-5)^{\oplus 3} \\ \oplus \\ \op3(-4) \\ \oplus \\ L \end{matrix} \overset{\mbox{\normalsize A}}{\longrightarrow} \begin{matrix} \op3(-3)^{\oplus 3} \\ \oplus \\ \op3(-4) \\ \oplus \\ L \end{matrix} \longrightarrow \sI_C \longrightarrow 0
\]
where $L$ is {a direct sum of line bundles}, and the map from $\op3(-6)$ to $L$ is zero. Indeed, this computation relies on knowing the postulation character $\gamma_C$, which can be computed from $Y$ and the property \ref{P3}; it is $\gamma_C = \{-1, -1, -1, 2, 2, -1\}$. On the other hand, imposing minimality and removing trivial factors, we conclude that $L = \op3(-4)^{\oplus c}$, for $c\geq 0$. Choosing coordinates, we may assume that $B = \begin{bmatrix} x_0 & x_1 & x_2 & x_3^2& 0\, ^c \end{bmatrix}^T$. Since $AB = 0$, the rows of $A$ are syzygies of $B$, hence we may write
\[
A = \begin{bmatrix}
 A_{11}^{3\times 3} & A_{12}^{3\times c+1} \\
 A_{21}^{c+1\times 3} & 0 ^{c +1 \times c+1}
\end{bmatrix},
\]
where the formats of the blocks are indicated in superscript. Moreover, the degrees of the entries are as follows: $\deg A_{11} = 2$, and $\deg A_{12} =\deg A_{21} = 1$. We have that $A_{21}$ has generic rank $\leq 2$ since $A_{21} \begin{bmatrix}x_0& x_1&x_2\end{bmatrix}^T = 0$. Note that $A$ must have generic rank $3+c$. On the other hand, at a general point $p$ such that $\det A_{11} (p) \neq 0$, we have $\rk A(p) = 3 + \rk( A_{21}(p)A_{11}(p)^{-1}A_{12} (p)) \leq 5$, hence $c\leq 2$. We observe further that $A$ drops rank along the variety defined by the maximal minors of $A_{12}$. This eliminates the case $c=2$, because $A$ would have rank $\leq 4$ along the cubic surface $V(\det A_{12})$. Moreover, for $c=1$, the curve $C$ does not link directly to a curve $Y$ as above. Indeed, the minors of $A_{12}$ define a (possibly degenerate) twisted cubic, the residual curve in $C$ is a plane cubic, and the linear form defining this plane divides every element in $H^0(\sI_C(3))$. 

In the second case, we have that $C$ must fit in the following exact sequence:
\[
0\longrightarrow \sI_Y(-1) \longrightarrow \sI_C \longrightarrow \sI_{Z/H}(-d) \longrightarrow 0,
\]
with $d \in \{3,4\}$, and $Y$ an extremal curve of degree $6-d$. From the resolutions of $\sI_Y$ and $\sI_{Z/H}$ one shows that $\sI_C$ has a resolution of the form \eqref{eq: res-1800} with $c=1$. Since $h^2(\sI_C(1)) = 0$ and $h^3(\sI_Y) = h^3(\op3) = 0$, we have that 
\[
h^2(\OO_H(1-d)) = h^2(\sI_{Z/H}(1-d)) = 0.
\]
Therefore, $d < 4$, and the only possibility is $d = 3$. {It follows that $p_a(Y) = 0$ and $h^0(\OO_Z) =1$, cf. Lemma \ref{lem: curv6cub}. }

To conclude, we observe that if $Y$ is a disjoint union of a conic and a line, then $\sI_Y(3)$ is globally generated. By \cite[Théorème 5.1]{MDP2}, a general curve linked to $Y$ is smooth; it is also irreducible since $h^1(\sI_C) = 0$. Furthermore, the curves of Lemma \ref{lem: curve-1800} for which $c=1$ are flat limits with constant Rao module of those satisfying $c=0$, cf. \cite[Theorem 4.1]{Kleppe-curves}. 
\end{proof}

A general curve $C$ from Lemma \ref{lem: curve-1800} is linked to a curve $Y$ of degree $3$ and genus $-1$. We have that $\hilb^{3,-1}(\p3)$ is irreducible of dimension $12$. Then, we have that the curves from Lemma \ref{lem: curve-1800} form an irreducible family $\mathcal{G}$ of dimension
\[
\dim \mathcal{G} = 12 + 2h^0(\sI_Y(3)) - 2h^0(\sI_C(3)) = 12 + 18 -6 = 24.
\]
Since a general curve is smooth and irreducible, $h^0(\omega_C(1)) = 7$, by Riemann-Roch, and there exists $\xi \in H^0(\omega_C(1))$ with isolated zeros. Thus $\calr(-1,4,8)_{0,0}$ is irreducible of dimension 
\[
\calr(-1,4,8)_{0,0} = 24 + 7 - h^0(F(2)) = 27.
\]
Moreover, it also follows from Lemma \ref{lem: curve-1800} that $F\in \calr(-1,4,8)_{0,0}$ is $3$-regular with $h^0(F) = h^1(F(-1)) =0$. Then $\extd^2(F,F) = 0$, by Lemma \ref{lem: ext2=0}, hence $\calr(-1,4,8)_{0,0}$ is smooth of expected dimension; its closure is an irreducible component of $\calr(-1,4,8)$.

\subsection{Description of \texorpdfstring{$\calr(-1, 4, 8)_{0, 1}$}{R(-1, 4, 8) 0, 1}} 
Now consider the case $h^0(F(1)) = 0$ and $h^1(F(-1)) = 1$, hence, $F$ has spectrum $\{ -3, -2, -1, 0\}$. Then $F(2)$ must correspond to a curve $C$ of degree $6$ and genus $2$ such that
\[
h^0(\sI_C(2)) = 0, \, h^1(\sI_C(1)) = 2, \, \text{and } h^0(\sI_C(3)) = h^0(F(2))-1 \geq 3.
\]
By Lemma \ref{lem: curv6cub}, in view of our description of $\calr(-1, 4, 8)_{0, 0}$, we see that $\sI_C$ fits in an exact sequence
\[
0\longrightarrow \sI_Y(-1) \longrightarrow \sI_C \longrightarrow \sI_{Z/H}(-4) \longrightarrow 0
\]
with $\deg(Y) = 2$, $p_a(Y) = -1$, and $h^0(\OO_Z)= 1$. Dualizing and twisting the sequence above, we have {the exact sequence}
\[
0 \longrightarrow \OO_D(2) \longrightarrow \omega_C(1) \overset{\pi}{\longrightarrow} \OO_Y \longrightarrow \OO_Z \longrightarrow 0
\]
It follows that any global section of $\omega_C(1)$ vanishes along a line $L\subset Y$. If $Y = L \cup L'$, with $Z\in L$, then the image of $\pi$ is $\OO_{L'}\oplus \OO_L(-Z)$. If $Y$ is a double line, then $\OO_Y \to \OO_Z$ factors through $\OO_L \to \OO_Z$. Hence, the image of $\pi$ is an extension of $\OO_L(-Z)$ by $\OO_L$. Therefore, any sheaf $F$ as above is torsion-free but not reflexive and 
\[
\calr(-1, 4, 8)_{0, 1} = \emptyset.
\]

\subsection{Description of \texorpdfstring{$\calr(-1, 4, 8)_{1, m}$}{R(-1, 4, 8) 1, m}} Now we turn our attention to the cases where $h^0(F(1)) = 1$ and $h^1(F(-1)) = m \leq 1$. Then $F(1)$ corresponds to a curve of degree $4$ and genus $-1$ such that 
\[
h^1(\sI_C(-1)) = m, \,\, h^1(\sI_C) = m+1, \,\, h^1(\sI_C(1)) = 2, \,\, \text{and} \,\, h^1(\sI_C(2)) = h^0(\sI_C(2)).
\]
Curves of degree $4$ and genus $-1$ were classified in \cite[Proposition 6.1]{NS-deg4}. We have recalled their classification in \S\ref{sec: 048-1m} but state it again for convenience. The corresponding Hilbert Scheme has $3$ irreducible components whose general members are extremal curves ($h^1(\sI_C(2)) = 2$), disjoint union of two conics ($h^1(\sI_C(2))= 1$), and disjoint union of twisted cubic and a line ($h^1(\sI_C(2)) = 0$). In particular, $m=1$ if and only if $C$ is extremal.

If $C$ is extremal, then $C$ is a curve in a double plane given by {the exact sequence}
\[
0 \longrightarrow \sI_L(-1) \longrightarrow \sI_C \longrightarrow \sI_{Z/H}(-3) \longrightarrow 0
\]
where $L$ is a line and $h^0(\OO_Z) = 2$. Dualizing and twisting this sequence, we obtain {the exact sequence}
\[
0\longrightarrow \OO_D(3) \longrightarrow \omega_C(3) \longrightarrow \OO_L \longrightarrow 0,
\]
where $D\subset C\cap H$ is a plane cubic. Thus, $h^0(\omega_C(1) = 10$ and global sections with isolated zeros. Therefore, $\calr(-1, 4, 8)_{1, 1}$ is irreducible of dimension
\[
\dim \calr(-1, 4, 8)_{1, 1} = 17 + 10 - 1 = 26.
\]
In particular, it cannot be an irreducible component of $\calr(-1, 4, 8)$. Due to Proposition \ref{prop: extremal-ext}, $\extd^2(F,F) = 1$ for any $F\in \calr(-1, 4, 8)_{1, 1}$.

If $C$ is subextremal, then $\sI_C$ fits in an exact sequence (cf. \cite[Proposition 9.9]{HS})
\[
0 \longrightarrow \sI_Y(-1) \longrightarrow \sI_C \longrightarrow \sI_{Z/H}(-2) \longrightarrow 0,
\]
where $Y$ is a plane conic and $h^0(\OO_Z) = 2$. Dualizing and twisting this sequence, we have:
\[
0 \longrightarrow \OO_{Y'}(2) \longrightarrow \omega_C(3) \longrightarrow \OO_Y(2) \longrightarrow 0,
\]
where $Y' \subset C\cup H$ is a conic. Thus, $h^0(\omega_C(3)) = 10$ and a general global section has isolated zeros. Then the sheaves corresponding to these curves form an irreducible variety $\mathcal{G}_1 \subset \calr(-1, 4, 8)_{1, 0}$ of dimension
\[
\dim \mathcal{G}_1 = 16 + 10 - 1 = 25.
\]

Reasoning as we did for extremal curves in Section \ref{sec: extremal}, we find
\[
\Ext^2(F,F) \cong Ext^2(F,\sI_{Z/H}(-2)).
\]
Moreover, $\Ext^2(F, \OO_H(-2)) = 0$. Then $\extd^2(F, F) \leq \extd^2(F, \OO_Z)$, but $\extd^2(F, \OO_Z)$ depends on how $Z$ intersect the points where $F$ is not free, i.e., the vanishing locus of the extension class $\xi \in H^0(\omega_C(3))$ defining $F$. When $C$ is a disjoint union of two conics, a generic choice of $\xi$ does not vanish on $Z$. The general case follows from a direct but lengthy computation similar to the proof of Lemma \ref{lem: vanish-extremal}. With the code below, one can produce unobstructed examples.

\begin{lstlisting}[language=Macaulay2]
load "refshsetup.m2";
C =intersect(ideal(x_0, random(2,R)), ideal(x_1, random(2,R)));
F = Serre(C,1); chern F
Ext^2(F,F) --unobstructed
\end{lstlisting}

\begin{remark}
 Some examples with $\extd^1(F,\OO_Z)>0$ turned out to be unobstructed. However, we do not know whether this is true for any sheaf in $\calr(-1, 4, 8)_{1, 0}$ corresponding to a subextremal curve.
\end{remark}

If $C$ is neither extremal nor subextremal, then the Rao module of $C$ is isomorphic to 
\[
\frac{\kappa[x_0,x_1,x_2,x_3]}{(x_0,x_1,x_2^2,x_2x_3,x_3^2)},
\]
cf. \cite[Proposition 4.2]{NS-deg4}. In particular, we have that $h^1(\sI_C) = 1$, $h^1(\sI_C(1)) = 2$, and $h^1(\sI_C(t)) = 0$, for $t\not\in \{0,1\}$. A reflexive sheaf $F(1)$ corresponding to $C$ must be $2$-regular, i.e., $F$ is $3$-regular. Also, we have that $h^1(F(-1)) = h^1(\sI_C(-1)) = 0$ and $h^0(F) = 0$. Then $\Ext^2(F,F) = 0$ by Lemma \ref{lem: ext2=0}.

These curves form an irreducible variety of dimension $16$. Generically, $C$ is a disjoint union of a twisted cubic $Y$ and a line $L$. Hence, $\omega_C(3) = \omega_Y(3)\oplus \omega_L(3)$ has global sections vanishing only in dimension zero. Then the sheaves corresponding to these curves form an irreducible variety $\mathcal{G}_2 \subset \calr(-1, 4, 8)_{1, 0}$ of dimension
\[
\dim \mathcal{G}_2 = 16 + h^0(\omega_C(3)) -h^0(F(1)) = 16 + 10 -1 = 25.
\]

We conclude that 
\[
\calr(-1, 4, 8)_{1, 0} = \mathcal{G}_1 \cup \mathcal{G}_2
\]
has dimension $25$. 

\section{Stable reflexive sheaves with \texorpdfstring{$c_1 = -1$}{c1 = -1} and \texorpdfstring{$c_3=10$}{c3=10}}\label{sec: odd-10}

Now we turn to reflexive sheaves with Chern classes $(c_1, c_2, c_3) = (-1, 4, 10)$. Due to Lemma \ref{lem: h0F1>0}, such a sheaf $F$ must satisfy $h^0(F(1))\le 1$, while the possible spectra imply that $h^1(F)\le1$. Therefore, we introduce the following sets
\[
\calr(-1, 4, 10)_{l, m} \coloneqq \{\, F \in\calr(-1, 4, 10) \mid h^0(F(1))=l , ~h^1(F)=m \,\}, 
\]
so that 
\[
\calr(-1, 4, 10) ~ = \bigsqcup_{l, m=0, 1} \calr(-1, 4, 10)_{l, m}; 
\]
note that the spectrum only depends on $m$, it is $\{-2, -2, -2, -1\}$ for $m=0$, and $\{-3, -2, -1, -1\}$ for $m=1$. We then prove: 

\begin{theorem}\label{thm: -1,10}
The moduli scheme $\calr(-1, 4, 10)$ is an integral, unirational quasi-projective variety of dimension 27, with a general element lying in $\calr(-1, 4, 10)_{0, 0}$; it contains $\calr(-1, 4, 10)_{1, 0}$, $\calr(-1, 4, 10)_{0, 1}$, and $\calr(-1, 4, 10)_{1, 1}$ as divisors. Moreover, $\calr(-1, 4, 10)$ is smooth away from a set of codimension at least two.
\end{theorem}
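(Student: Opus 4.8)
The plan is to stratify $\calr(-1,4,10)$ by the two upper-semicontinuous invariants $l=h^0(F(1))$ and $m=h^1(F)$, show that the minimal stratum $\calr(-1,4,10)_{0,0}$ is open, dense and carries the unique irreducible component, and then control the obstruction space $\extd^2(F,F)$ everywhere. Two vanishing criteria drive the argument: every stable $F$ here satisfies $h^0(F)=h^1(F(-1))=h^3(F)=0$ and, by Lemma \ref{lem: h2}, $h^2(F(1))=0$, so Lemma \ref{lem: ext2=0} gives $\extd^2(F,F)=0$ as soon as $h^1(F(2))=0$, i.e. as soon as $F$ is $3$-regular; and Proposition \ref{prop: extremal-ext} computes $\extd^2(F,F)$ for the sheaves attached to extremal curves, which are exactly the ones that fail to be $3$-regular.

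For the main stratum, a sheaf $F\in\calr(-1,4,10)_{0,0}$ has $h^0(F(1))=0$, so by \eqref{eq: twist} with $k=2$ and \eqref{eq: twist2-deg+g} its degree-$2$ section vanishes on a non-degenerate curve $C$ of degree $6$ and genus $3$ with $h^1(\sI_C(1))=0$; non-degeneracy gives stability. I would realize such $C$ by liaison: two general cubics through $C$ link it to a curve of degree $3$ and genus $0$, i.e. a twisted cubic. Since twisted cubics form an irreducible family of dimension $12$, the liaison formula \eqref{eq: dimform-liaison} shows the curves $C$ form an irreducible family $\calc$ of dimension $24$ with smooth generic member. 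For smooth $C$ one computes $h^0(\omega_C(1))=8$ and $h^0(F(2))=5$, so Proposition \ref{prop: Serre-families} and \eqref{eq: dimform-red} give $\dim\calr(-1,4,10)_{0,0}=24+8-5=27$ together with irreducibility; the generic member is $3$-regular, hence unobstructed, so $\calr(-1,4,10)_{0,0}$ is generically smooth of dimension $27$ and $\overline{\calr(-1,4,10)_{0,0}}$ is a $27$-dimensional component.

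I would then treat the three remaining strata by the Serre correspondence: the $l=1$ strata come from degree-$4$, genus-$0$ curves ($k=1$) — smooth rational quartics when $m=0$, and extremal curves (a plane cubic meeting a residual line) when $m=1$ — while $\calr(-1,4,10)_{0,1}$ comes from special degree-$6$ genus-$3$ curves with $h^1(\sI_C(1))=1$. Using \cite{NS-deg4}, Lemma \ref{lem: curv6cub}, and \eqref{eq: dimform-red}, each of these has dimension $26$, so all three are divisors. Crucially, each is generically unobstructed: the rational quartics and the special sextics are generically $3$-regular (Lemma \ref{lem: ext2=0}), and for the extremal quartics, which satisfy $h^0(\OO_Z)=1$ because $p_a(C)=0$, Proposition \ref{prop: extremal-ext} yields $\extd^2(F,F)=0$ whenever $\xi|_Z\neq0$. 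Hence $\calr(-1,4,10)$ is smooth of dimension $27$ at the generic point of each stratum; as the three boundary strata have dimension $26<27$, none can be dense in a component, so each lies in the closure of the unique $27$-dimensional component (any such component must meet $\calr(-1,4,10)_{0,0}$ densely, forcing it to equal $\overline{\calr(-1,4,10)_{0,0}}$). This proves $\calr(-1,4,10)$ is irreducible of dimension $27$, and unirationality follows from the rational parametrization of $\calc$ by twisted cubics and pencils of cubics, via the projective-bundle structure of the Serre map.

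For integrality and the smoothness statement I would argue structurally. The Euler characteristic $\chi(F,F)$ is the constant $-26$, so $\extd^1(F,F)=27+\extd^2(F,F)$ at every point, and deformation theory presents $\calr(-1,4,10)$ locally as the zero scheme of $\extd^2(F,F)$ equations on a smooth germ of dimension $\extd^1(F,F)$; since the space is irreducible of dimension $27$, these equations necessarily form a regular sequence, whence $\calr(-1,4,10)$ is a local complete intersection and in particular Cohen–Macaulay. Being generically reduced — it is smooth wherever $F$ is unobstructed — it is then reduced, and with irreducibility this yields integrality. Finally, the smooth locus contains every unobstructed sheaf, so the singular locus is confined to the sheaves with $\extd^2(F,F)>0$: those failing $3$-regularity that are not rescued by Proposition \ref{prop: extremal-ext}, together with the extremal sheaves with $\xi|_Z=0$. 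The main obstacle is to show that this entire locus has dimension at most $25$; this demands a case-by-case estimate, stratifying each family of curves by its Rao module and postulation character and bounding the dimension of the sub-loci where $h^1(F(2))>0$ and the obstruction genuinely survives. It is this codimension-two estimate, rather than any single cohomological computation, that is the heart of the theorem.
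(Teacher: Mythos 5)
Your proposal reproduces the paper's stratification by $(l,m)=(h^0(F(1)),h^1(F))$ and the dimension counts, but it does not actually prove the theorem: the final clause --- smoothness away from codimension two --- is precisely the step you defer as ``the main obstacle,'' so what you yourself call the heart of the statement is left open. Moreover, the case-by-case Rao-module estimate you anticipate is unnecessary, because the curve lemmas give \emph{everywhere} statements, not merely generic ones, and this is how the paper closes the argument. For $F\in\calr(-1,4,10)_{0,m}$ the section of $F(2)$ vanishes on a degree-$6$, genus-$3$ curve with $h^0(\sI_C(2))=0$ and $h^0(\sI_C(3))\ge 4$, and in \emph{both} alternatives of Lemma \ref{lem: curv6cub} that survive these constraints (direct linkage to a degree-$3$, genus-$0$ curve, or the residual sequence with $Y$ of degree $2$ and quotient $\OO_H(-4)$) one has $h^1(\sI_C(3))=0$; hence \emph{every} such $F$ is $3$-regular and Lemma \ref{lem: ext2=0} gives $\extd^2(F,F)=0$ on all of $\calr(-1,4,10)_{0,0}\cup\calr(-1,4,10)_{0,1}$, including the entire $26$-dimensional divisor and not only its generic member. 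Likewise on $\calr(-1,4,10)_{1,0}$ the curve lies in $\mathcal{L}\setminus\mathcal{E}$, so $h^0(\sI_C(2))=1$, which forces $h^1(F(2))=0$ and again $\extd^2(F,F)=0$ for every sheaf in the stratum. Consequently the only possibly obstructed sheaves sit in $\calr(-1,4,10)_{1,1}$, where Proposition \ref{prop: extremal-ext} (with $c_1=-1$, $d=4$, $h^0(\OO_Z)=1$ because $p_a(C)=0$) yields $\extd^2(F,F)=0$ unless the extension class $\xi$ vanishes at the single point $P=Z$; since this is one condition on $\xi$, the potentially singular locus has dimension $25=26-1$, i.e. codimension two. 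Your proposal stops exactly where this short conclusion should appear, because you only claimed generic $3$-regularity on the lower strata and thereby manufactured a residual problem that the paper's lemmas have already eliminated.

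Apart from this gap, your route is essentially the paper's, with two defensible variations. For the open stratum you count via liaison: linking to twisted cubics and \eqref{eq: dimform-liaison} give $\dim\calc = 12 + 2\cdot 10 - 2\cdot 4 = 24$, and then \eqref{eq: dimform-red} gives $24+8-5=27$, which is correct; the paper instead observes that $C$ is ACM, derives the resolution $0\to\op3(-3)^{\oplus 3}\to\op3(-2)^{\oplus 5}\to F\to 0$, and applies Lemma \ref{lem: lift}, which delivers smoothness, irreducibility, and unirationality of $\calr(-1,4,10)_{0,0}$ in one stroke, whereas your unirationality sketch via ``pencils of cubics'' would still need to be made precise. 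Your local-complete-intersection argument for integrality (each germ is cut out by $\extd^2(F,F)$ equations in a smooth germ of dimension $\extd^1(F,F)=27+\extd^2(F,F)$, so on an irreducible $27$-dimensional space they form a regular sequence; Cohen--Macaulay plus generic reducedness gives reducedness) is sound and in fact supplies a detail the paper leaves implicit, since smoothness off codimension two alone does not imply reducedness. But both the irreducibility argument and this integrality argument rest on the stratumwise obstruction analysis above, so the proposal cannot stand until the everywhere-vanishing statements replacing your generic ones are supplied.
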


Moreover, the cohomology table for these sheaves is as follows.
\begin{table}[H]
\centering
\def\arraystretch{1.5}
\begin{tabular}{|c|c|c|c|c|c|c|c|c|} \hline
 &$-3$ &$-2$ &$-1$ & $0$ & $1$ & 2 & $3$ \\ \hline \hline
 $h^0(F(p))$ & $0$ & $0$ & $0$ & $0$ & $l$ & $5+k$ & $17$ \\ \hline
 $h^1(F(p))$ & $0$ & $0$ & $0$ & $m$ & $l$ & $k$ & $0$ \\ \hline
 $h^2(F(p))$ & $10$ & $7$ & $3$ & $m$ & $0$ & $0$ & $0$ \\ \hline
 $h^3(F(p))$ & $0$ & $0$ & $0$ & $0$ & $0$ & $0$ & $0$ \\ \hline
\end{tabular}
\caption{Cohomology table for $F\in\calr(-1, 4, 10)_{lm}$; $k=l\cdot m$. }
\label{tab: cohomology 1101}
\end{table}


\subsection{Description of \texorpdfstring{$\calr(-1, 4, 10)_{0, m}$}{R(-1, 4, 8) 0, m}} 

Suppose that $h^0(F(1))=0$ and consider a curve $C$ given by the vanishing locus of a section of $F(2)$. Then {the sequence}
\[
0 \longrightarrow \op3 \longrightarrow F(2) \longrightarrow \sI_C(3) \longrightarrow 0
\]
{is exact} and $C$ is a curve of degree $6$ and genus $3$. We have that $h^0(\sI_C(2)) = h^0(F(1))=0$ and $h^0(\sI_C(3)) = h^0(F(2)) - 1 \geq 4$. Then Lemma \ref{lem: curv6cub} tells us that either $C$ is linked to a curve $Y$ of degree $3$ and genus $0$, i.e., a (possibly degenerate) twisted cubic, or there exists a hyperplane $H$ such that $C$ fits into {the exact sequence}
\begin{equation}\label{seq: ideal1.10}
 0 \longrightarrow \sI_Y (-1) \overset{h}{\longrightarrow} \sI_C \longrightarrow \OO_H(-4) \longrightarrow 0 
\end{equation}
where $Y$ is a curve of degree $2$ and genus $-1$. We have $h^1(\sI_C(3))= 0$ in both cases. Therefore, $h^1(F(2)) = 0$ and $F$ is $3$-regular. Noting that $h^0(F)=0$ (by stability) and $h^1(F(-1))=0$ (due to the spectra), we apply Lemma \ref{lem: ext2=0} to conclude that every $F\in\calr(-1, 4, 10)_{0m}$ satisfies $\extd^2(F, F)=0$. Moreover, $h^1(F) = 1$ if and only if it corresponds to a curve given by $\eqref{seq: ideal1.10}$.

For $F\in\calr(-1, 4, 10)_{00}$, the corresponding curve is ACM with resolution
\[
0\longrightarrow \op3(-4)^{\oplus 3} \longrightarrow \op3(-3)^{\oplus 4} \longrightarrow \sI_C \longrightarrow 0.
\]
Therefore, $F$ has a resolution of the form
\[
0\longrightarrow \op3(-3)^{\oplus 3} \longrightarrow \op3(-2)^{\oplus 5} \longrightarrow F \longrightarrow 0
\]
We then apply Lemma \ref{lem: lift} to conclude that $\calr(-1, 4, 12)_{00}$ is smooth and unirational of dimension $27$, and its closure is an irreducible component of $\calr(-1, 4, 10)$. 

For $F\in\calr(-1, 4, 10)_{01}$, we use the sequence \eqref{seq: ideal1.10} to conclude that $C$ lies on a $23$-dimensional family, a divisor on the Hilbert Scheme $\hilb^{6, 3}(\p3)$, and also that $h^0(\omega_C(1)) = 8$. Then
\[
\dim \calr(-1, 4, 10)_{01} = 23 + 8 - h^0(F(2)) = 26.
\]


\subsection{Description of \texorpdfstring{$\calr(-1, 4, 10)_{1, m}$}{R(-1, 4, 10) 1, m}} 
Now we suppose that $h^0(F(1))=1$ and let $C$ be a curve given by 
\begin{equation}\label{eq: r_1m}
0 \longrightarrow \op3(-1) \longrightarrow F \longrightarrow \sI_C \longrightarrow 0.
\end{equation}
Then $C$ has degree $4$ and genus $0$. The Hilbert Scheme $\hilb^{4, 0}(\p3)$ has two irreducible components, denote them by $\mathcal{L}$ and $\mathcal{E}$. The general members are, respectively: 
\begin{enumerate}
\item[(1)] $C$ is a rational quartic;
\item[(2)] $C$ is a disjoint union of a plane cubic and a line;
\end{enumerate}
cf. \cite[\S 4]{MDP2}. The distinction is that $\mathcal{E}$ is composed by extremal curves, i.e., $h^0(\sI(2)) \geq 2$, while for a general rational quartic, we have $h^0(\sI(2)) = 1$. However, these two components intersect, cf. \cite[Example 3.10]{N-deg3}. Note that $C\in \mathcal{E}$ if and only if $h^0(\sI_C(2)) \geq 2$, if and only if $h^1(\sI_C)=1$.

If $F\in\calr(-1, 4, 10)_{10}$, then $h^1(\sI_C)=0$, so $C\in\mathcal{L} \setminus \mathcal{E}$. Then, $h^0(\sI_C(2)) = 1$ which implies $h^1(F(2)) = 0$. Due to the spectrum we have $h^2(F(1)) = h^1(F(-1)) = 0$, and it follows that $F$ is $3$-regular. Then Lemma \ref{lem: ext2=0} implies that $\extd^2(F, F) =0$. 

For a general element $C$ in $\mathcal{L}$, one can check that $h^0(\omega_C(3))=11$; using the formula in display \eqref{eq: dimform-red}, we conclude that
\[ 
\dim\calr(-1, 4, 10)_{10} = 16+11-1 = 26; 
\]
therefore, $\calr(-1, 4, 10)_{10}$ is not an irreducible component of $\calr(-1, 4, 10)$.

Similarly, if $F\in\calr(-1, 4, 10)_{11}$, then $h^1(\sI_C)=1$, so $C\in\mathcal{E}$. We have that 
\begin{equation}\label{eq: extr1}
 0\longrightarrow \sI_L(-1) \longrightarrow \sI_C \longrightarrow \sI_{P/H}(-3) \longrightarrow 0
\end{equation}
{is an exact sequence,} where $P\in H$ is a point and $L$ is a line. Dualizing this sequence, we get
\[
0 \longrightarrow \OO_Y(3) \longrightarrow \omega_C(3) \longrightarrow \OO_L(1) \longrightarrow 0
\]
where $Y\subset C\cup H$ is a plane cubic; it follows that $h^0(\omega_C(3))=11$. Using the formula in display \eqref{eq: dimform-red}, we conclude that
\[ 
\dim\calr(-1, 4, 10)_{11} = 16+11-1 = 26; 
\]
therefore, $\calr(-1, 4, 10)_{11}$ is not an irreducible component of $\calr(-1, 4, 10)$ either.

We know from Proposition \ref{prop: extremal-ext} that $\extd^2(F, F)=0$, unless the extension class $\xi$ defining $F$ vanishes at $P$. In this case, $\extd^2(F, F) \leq 1$. Since $P$ imposes only one condition on the extension $\xi$, this potentially obstructed locus has codimension $2$ in $\calr(-1, 4, 10)$. We give below a script to produce one obstructed example.

\begin{lstlisting}[language=Macaulay2]
load "refshsetup.m2";
l1 = random(1,R);
l2 = random(1,R);
f = random(3,R);
A = map(R^{2: -4,2: -2,-1}, R^{3: -5,-3},
 matrix{{-x_2,0,x_0,0}, 
	 {-x_1,x_0,0,0},
	 {0,-x_3^3,0,x_1},
	 {-x_3^3, -x_1^2*x_2,x_1^3,x_0},
	 {0, f*x_2, -f*x_1 -x_2*x_3^3, x_2^2}})
F = sheaf coker A;
chern(F) -- check the Chern classes
codim ann sheafExt^1(F,O) >= 3 -- check it is reflexive
codim ann sheafExt^2(F,O) >= 4 
codim ann sheafExt^3(F,O) >= 4 
Ext^2(F,F)
\end{lstlisting}


\section{Stable reflexive sheaves with \texorpdfstring{$c_1 = -1$}{c1 = -1} and \texorpdfstring{$c_3=12$}{c3=12}}\label{sec: odd-12}

To describe $\calr(-1,4,12)$, we start by noting that, in this case, every sheaf fits into a single family.

\begin{proposition}\label{prop: res-odd-12}
Let $F\in \calr(-1, 4, 12)$ then $F$ has a resolution of the form
\[
0 \longrightarrow \begin{matrix}
 \op3(-4) \\ \oplus \\ \op3(-3)
\end{matrix} \longrightarrow \begin{matrix}
 \op3(-1) \\ \oplus \\ \op3(-2)^{\oplus 2} \\ \oplus \\ \op3(-3)
\end{matrix} \longrightarrow F \longrightarrow 0,
\]
which is not necessarily minimal. 
\end{proposition}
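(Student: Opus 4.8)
The plan is to run the Serre correspondence in the twist $k=1$ and reduce the statement to a resolution of the associated ideal sheaf. By Lemma \ref{lem: h0} we have $h^0(F(1))>0$, while Lemma \ref{lem: h0F1>0}(ii) gives $h^0(F(1))\le 2$ with equality only when $c_3=16$; hence $h^0(F(1))=1$. A generator of $H^0(F(1))$ vanishes on a curve $C$, and \eqref{eq: twist} specializes to
\[
0\longrightarrow \op3(-1)\longrightarrow F\longrightarrow \sI_C\longrightarrow 0,
\]
where, by \eqref{eq: twist1-deg+g}, $C$ has degree $4$ and arithmetic genus $1$, and stability forces $h^0(\sI_C(1))=h^0(F)=0$, i.e. $C$ is non-degenerate. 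Since every $\op3(j)$ has no intermediate cohomology, this sequence yields $h^1(F(j))=h^1(\sI_C(j))$ for all $j$; as the outer terms of the asserted resolution are sums of line bundles, it forces $h^1(F(j))=0$ for every $j$, so the resolution can hold only if $C$ is arithmetically Cohen--Macaulay. Thus the heart of the proof is to show that $C$ is ACM.

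To establish this, I would start from $\chi(\sI_C(2))=2$ (Lemma \ref{lem: HRR}) together with $h^2(\sI_C(2))=h^1(\OO_C(2))=0$, the latter because $\OO_C(2)$ has degree $8>2p_a(C)-2=0$. This gives $h^0(\sI_C(2))=2+h^1(\sI_C(2))\ge 2$, which, as $C$ is non-degenerate, means exactly that $C$ is extremal in the sense of Section \ref{sec: extremal}. If $C$ were not ACM, the sequence \eqref{seq: extremal} would apply and the genus formula following it would give $p_a(C)=\tfrac12(d-2)(d-3)-h^0(\OO_Z)=1-h^0(\OO_Z)$ with $h^0(\OO_Z)\ge 1$, forcing $p_a(C)\le 0$ and contradicting $p_a(C)=1$. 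Hence $C$ is ACM and $h^0(\sI_C(2))=2$. This is the step I expect to be the main obstacle, and it is precisely where the extremal-curve analysis of Section \ref{sec: extremal} pays off: it disposes, uniformly and without case analysis, of every (possibly reducible or non-reduced) degree-$4$, genus-$1$ curve that could arise.

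With $C$ ACM of degree $4$, genus $1$ and $h^0(\sI_C(2))=2$, its minimal graded free resolution is pinned down by the Hilbert function: the two quadric generators admit a single linear syzygy exactly when they share a common linear factor, and none otherwise, so a syzygy count gives
\[
0\longrightarrow \op3(-4)\oplus\op3(-3)^{\oplus a}\longrightarrow \op3(-2)^{\oplus 2}\oplus \op3(-3)^{\oplus a}\longrightarrow \sI_C\longrightarrow 0,\qquad a\in\{0,1\},
\]
with $a=0$ for a complete intersection of two quadrics and $a=1$ for the extremal ACM curves containing a plane cubic. Finally I would apply the Horseshoe Lemma to $0\to\op3(-1)\to F\to\sI_C\to 0$; since $\op3(-1)$ is already free, this produces
\[
0\longrightarrow \op3(-4)\oplus\op3(-3)^{\oplus a}\longrightarrow \op3(-1)\oplus\op3(-2)^{\oplus 2}\oplus \op3(-3)^{\oplus a}\longrightarrow F\longrightarrow 0.
\]
For $a=1$ this is the asserted (minimal) resolution; for $a=0$ one adds the trivial summand $\op3(-3)\xrightarrow{\mathrm{id}}\op3(-3)$ to recover the stated non-minimal form, completing the argument.
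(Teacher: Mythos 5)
Your overall architecture is sound and reaches the stated resolution, but it diverges from the paper at the pivotal step. The paper also runs Serre's construction with $k=1$ to get $0\to\op3(-1)\to F\to\sI_C\to 0$ with $\deg C=4$, $p_a(C)=1$, but then it simply \emph{cites} \cite[Proposition 3.5]{Hart-spacecurves} for the fact that such curves are ACM, and afterwards does the explicit $\gcd(q_1,q_2)$ case analysis ($q_1,q_2$ coprime: complete intersection; common factor: $I_C=(x_0x_1,x_0x_2,x_1p_1+x_2p_2)$ or the double-plane case) to write down the resolution of $\sI_C$, finishing with the same Horseshoe argument as you. You instead derive ACM-ness internally: $h^0(\sI_C(2))\ge 2$ plus non-planarity makes $C$ extremal in the sense of Section \ref{sec: extremal}, and the genus formula $p_a(C)=\tfrac12(d-2)(d-3)-h^0(\OO_Z)$ with $h^0(\OO_Z)\ge1$ for non-ACM extremal curves forces $p_a(C)\le 0$, contradicting $p_a(C)=1$. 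That is a legitimate and rather elegant alternative: it keeps the proof self-contained within the paper's own extremal-curve machinery, at the cost of needing the cohomological input $h^0(\sI_C(2))\ge2$ up front, whereas the paper's citation gets ACM-ness in one line and only then reads off $h^0(\sI_C(2))=2$. Your "syzygy count" determining the resolution is a compressed version of the paper's gcd dichotomy and is fine (two independent quadrics have a linear syzygy iff they share a linear factor). Note also that non-planarity needs no appeal to $h^0(F(1))=1$: a planar quartic has $p_a=3\ne1$.

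One step, however, is wrongly justified and would fail in general: you claim $h^2(\sI_C(2))=h^1(\OO_C(2))=0$ ``because $\OO_C(2)$ has degree $8>2p_a(C)-2$.'' The criterion $\deg>2g-2\Rightarrow h^1=0$ is valid for integral curves, but $C$ here is merely locally Cohen--Macaulay and may be reducible or non-reduced --- and at this point of your argument no classification of $C$ is yet available. Concretely, the disjoint union of a plane cubic and a line has $p_a=0$ and $\deg\OO_C=0>2p_a-2=-2$, yet $h^1(\OO_C)=1$, so the step as stated is false. The fix is immediate and already in the paper: from $0\to\op3(1)\to F(2)\to\sI_C(2)\to0$ one has $h^2(\sI_C(2))=h^2(F(2))$, and Lemma \ref{lem: h2} gives $h^2(F(2))=0$ for every stable rank-two reflexive sheaf with $c_2=4$ (the spectrum never contains $-5$). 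With that substitution, your $h^0(\sI_C(2))\ge2$ (and, after ACM-ness, $h^0(\sI_C(2))=2$ via $h^0(F(2))=\chi(F(2))=6$) goes through. A second, cosmetic slip: ``$h^0(\sI_C(1))=h^0(F)$'' is not the right identity; the sequence gives $h^0(\sI_C(1))=h^0(F(1))-1=0$, which is what you actually need and which you have from Lemmas \ref{lem: h0} and \ref{lem: h0F1>0}. With these two repairs the proposal is a correct proof.
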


\begin{proof}
We have that $h^0(F(1))\geq 1$. Consider $C$ a curve given by {an exact sequence}
\begin{equation}\label{seq: ideal1.12}
 0 \longrightarrow \op3 \longrightarrow F(1) \longrightarrow \sI_C(1) \longrightarrow 0 .
\end{equation}
Then $C$ is a curve of degree $4$ and genus $1$. Such curves are ACM due to \cite[Proposition 3.5]{Hart-spacecurves}, hence $H^1_*(F) = 0$. It follows that $h^0(\sI_C(2)) = 2$. Let $Q_1, Q_2 \in H^0(\sI_C(2))$ linearly independent. If $\gcd(q_1, q_2) = 1$ then $C$ is a complete intersection. If $q_1$ and $q_2$ share a common factor, then, up to a linear change of coordinates, we may assume that $q_2 = x_0x_1$ and either $q_1 = x_0x_2$ or $q_1 = x_0^2$. If $q_1 = x_0x_2$ then $C$ is the union of a cubic $Y$ in the plane $H = V(x_0)$ and the line $L= V(x_1,x_2)$ meeting at one point. Thus, 
\[
I_C = (x_1, x_2) \cap (x_0, x_1p_1+ x_2p_2) = (x_0x_1, x_0x_2, x_1p_1+ x_2p_2),
\]
where $\deg p_1 = \deg p_2 = 2$. If $q_1 = x_0^2$, $C$ is a curve in the double plane, then {we have an exact sequence}
\[
0 \longrightarrow \sI_Y(-1) \overset{\cdot x_0}{\longrightarrow} \sI_C \longrightarrow \OO_H(-3) \longrightarrow 0 
\]
where {$Y = V(x_0, x_1)$}. In any case, we get a resolution
\[
0 \longrightarrow \begin{matrix}
 \op3(-4) \\ \oplus \\ \op3(-3)
\end{matrix} \longrightarrow \begin{matrix}
 \op3(-2)^{\oplus 2} \\ \oplus \\ \op3(-3)
\end{matrix} \longrightarrow \sI_C \longrightarrow 0,
\]
which is minimal unless $C$ is a complete intersection. Combining this resolution with \eqref{seq: ideal1.12}, we conclude the proof.
\end{proof}

We then obtain the following cohomology table for $F\in \calr(-1, 4, 12)$.

\begin{table}[H]
 \centering
 \def\arraystretch{1.5}
 \begin{tabular}{|c|c|c|c|c|c|c|c|c|} \hline
 &$-3$ &$-2$ &$-1$ & $0$ & $1$ & 2 & $ 3$ \\ \hline \hline
 $h^0(F(p))$ & $0$ & $0$ & $0$ & $0$ & $1$ & $6$ & $18$ \\ \hline
 $h^1(F(p))$ & $0$ & $0$ & $0$ & $0$ & $0$ & $0$ & $0$ \\ \hline
 $h^2(F(p))$ & $11$ & $8$ & $4$ & $1$ & $0$ & $0$ & $0$ \\ \hline
 $h^3(F(p))$& $0$ & $0$ & $0$ & $0$ & $0$ & $0$ & $0$ \\ \hline
 \end{tabular}
 \caption{Cohomology table for $F\in \calr(-1,4,12)$ with spectrum $\{-3, -2, -2, -1\}$. }
 \label{tab: cohomology 1121}
\end{table}

Note that every $F$ in $\calr(-1, 4, 12)$ is $3$-regular and satisfies $h^0(F)=h^1(F(-1))=0$, therefore $\extd^2(F, F)=0$ by Lemma \ref{lem: ext2=0}, hence $\extd^1(F, F)=27$ and $\calr(-1, 4, 12)$ is smooth. On the other hand, Proposition \ref{prop: res-odd-12} implies that we have a surjective rational map $\p{}(\Hom(E, L)) \dashrightarrow \calr(-1, 4, 12)$, where 
\[ 
E\coloneqq \op3(-4)\oplus\op3(-3) ~~{\rm and} ~~ L\coloneqq \op3(-3)\oplus\op3(-2)^{\oplus 2}\oplus\op3(-1). 
\]
Hence, $\calr(-1, 4, 12)$ must be irreducible and unirational. We have therefore proved:

\begin{theorem}\label{thm: -1412}
$\calr(-1, 4, 12)$ is a smooth, irreducible, unirational quasi-projective variety of dimension 27. 
\end{theorem}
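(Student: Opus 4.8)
The plan is to read off everything from the uniform resolution of Proposition~\ref{prop: res-odd-12} together with the vanishings recorded in Table~\ref{tab: cohomology 1121}. First I would establish smoothness. Every $F\in\calr(-1,4,12)$ is $3$-regular (from the table, $h^1(F(2))=h^2(F(1))=h^3(F)=0$) and satisfies $h^0(F)=h^1(F(-1))=0$. These are exactly the hypotheses of Lemma~\ref{lem: ext2=0} with $k=3$, since $h^0(F(k-3))=h^0(F)$ and $h^1(F(k-4))=h^1(F(-1))$. Hence $\Ext^2(F,F)=0$ at every closed point, the obstruction space vanishes identically, and $\calr(-1,4,12)$ is smooth.

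For the dimension, stability makes $F$ simple, so $\homd(F,F)=1$, while Serre duality together with stability gives $\extd^3(F,F)=\homd(F,F(-4))=0$ (a nonzero map between the stable sheaves $F$ and $F(-4)$ is impossible, as $\mu(F)>\mu(F(-4))$). Combined with $\extd^2(F,F)=0$, the Euler characteristic $\chi(F,F)$, which is a topological invariant of the Chern classes, yields $\extd^1(F,F)=1-\chi(F,F)=27$. Because the moduli scheme is smooth with Zariski tangent space of constant dimension $27$, it is smooth of pure dimension $27$.

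For irreducibility and unirationality I would set $E:=\op3(-4)\oplus\op3(-3)$ and $L:=\op3(-1)\oplus\op3(-2)^{\oplus 2}\oplus\op3(-3)$. By Proposition~\ref{prop: res-odd-12} every $F$ is the cokernel of an injective morphism $\varphi\colon E\to L$ with zero-dimensional degeneracy locus; such $\varphi$ form an open subset of the affine space $\Hom(E,L)$, and $\varphi\mapsto[\coker\varphi]$ defines a modular map whose image is all of $\calr(-1,4,12)$, again by the Proposition, which shows every sheaf arises in this way. As $\Hom(E,L)$ is rational and the map is dominant, indeed surjective on closed points, $\calr(-1,4,12)$ is the image of an irreducible rational variety, hence irreducible and unirational. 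Together with the previous two paragraphs, this completes the proof.

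I expect the delicate point to be the dimension count, and in particular the temptation to extract it from the quotient description of Lemma~\ref{lem: lift}. That lemma requires $\Hom(L,E)=0$, which fails here: the summand $\op3(-3)$ appears in both $E$ and $L$ (the resolution of Proposition~\ref{prop: res-odd-12} is non-minimal exactly when $C$ is a complete intersection), so $\Hom(L,E)\supseteq\Hom(\op3(-3),\op3(-3))\neq 0$. Consequently the generic fibre of the modular map is not a single $\mathbb{G}$-orbit, its generic stabilizer being positive-dimensional, and the naive count $\homd(E,L)-\homd(E,E)-\homd(L,L)+1=63-6-32+1=26$ falls one short of the true value $27$. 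This is precisely why the dimension must instead be obtained from $\extd^1(F,F)=27$, a route that is legitimate only because smoothness has already been secured through $\Ext^2(F,F)=0$.
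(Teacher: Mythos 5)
Your proposal is correct and takes essentially the same route as the paper: smoothness via Lemma \ref{lem: ext2=0} with $k=3$ using the vanishings in Table \ref{tab: cohomology 1121}, the dimension from $\extd^1(F,F)=1-\chi(F,F)=27$ once $\homd(F,F)=1$ and $\extd^2(F,F)=\extd^3(F,F)=0$, and irreducibility and unirationality from the surjective rational map out of $\mathbb{P}(\Hom(E,L))$ furnished by Proposition \ref{prop: res-odd-12}. Your closing caveat also matches the paper's practice—Lemma \ref{lem: lift} is indeed never invoked here, precisely because the shared $\op3(-3)$ summand gives $\Hom(L,E)\neq 0$ and the naive count $63-6-32+1=26$ undercounts—though note that since $\Ext^1(L,E)=0$ the generic fibre may still be a single $\mathbb{G}$-orbit; what fails is only the triviality of the stabilizer (it is one-dimensional for the non-minimal presentations), which accounts exactly for the missing $1$.
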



\end{document}